\newcommand{\proofka}{{\noindent \it Proof. }}
\newtheorem{thm}{Theorem}
\newtheorem{lemma}[thm]{Lemma}
\newtheorem{cor}[thm]{Corollary}
\newtheorem{problem}{Problem}
\newtheorem{prop}[thm]{Proposition}
\newtheorem{defi}{Definition}
\newtheorem{remark}{Remark}
\def\beq{\begin{equation}}\def\eeq{\end{equation}}
\def\beqn{\begin{eqnarray}}\def\eeqn{\end{eqnarray}}
\def\eps{\varepsilon}
\def\qed{\ifhmode\unskip\nobreak\fi\quad\ifmmode\Box\else$\Box$\fi}
\newcommand{\M}{{\cal M}}
\newcommand{\A}{{\cal A}}
\newcommand{\B}{{\cal B}}
\newcommand{\G}{{\cal G}}
\newcommand{\F}{{\cal F}}
\newcommand{\LL}{{\cal L}}
\newcommand{\C}{{\cal C}}
\newcommand\mbf[1]{\mbox{\boldmath$#1$}}
\newcommand\msbf[1]{\mbox{\boldmath\scriptsize$#1$}}
\title{Structured Codes of Graphs}
\author{\hfil Noga Alon\thanks{Department of Mathematics, Princeton University, Princeton, NJ 08544, USA and Schools of Mathematics and
Computer Science, Tel Aviv University, Tel Aviv 69978,
Israel.
Research supported in part by NSF grant DMS-1855464
and BSF grant 2018267; {\tt nalon@math.princeton.edu}}
\and Anna Gujgiczer\thanks
{Department of Computer Science and Information Theory,
Faculty of Electrical Engineering and Informatics,
Budapest University of Technology and Economics and
MTA-BME Lend\"ulet Arithmetic Combinatorics Research Group, Budapest, Hungary. Research partially supported by the National Research, Development and Innovation Office (NKFIH) grant K--120706 of NKFIH Hungary; {\tt gujgicza@cs.bme.hu}
}
\and J\'anos K\"orner\thanks{
Sapienza University of Rome,
Italy; {\tt korner@di.uniroma1.it} }
\and Aleksa Milojevi\'c\thanks{Department of Mathematics, Princeton University, Princeton, NJ 08544, USA; {\tt aleksam@princeton.edu}}
\and G\'abor Simonyi\thanks{Alfr\'ed R\'enyi Institute of Mathematics, Budapest, Hungary and
Department of Computer Science and Information Theory,
Faculty of Electrical Engineering and Informatics,
Budapest University of Technology and Economics. Research partially supported by the National Research, Development and Innovation Office (NKFIH) grants K--120706, K--132696 and SNN-135643 of NKFIH Hungary; {\tt simonyi@renyi.hu}
}
}
\date{}
\begin{document}

\maketitle

\begin{abstract}
We investigate the maximum size of graph families on a common vertex set of cardinality $n$ such that the symmetric difference of the edge sets of any two members of the family satisfies some prescribed condition. We solve the problem completely for infinitely many values of $n$ when the prescribed condition is connectivity or $2$-connectivity, Hamiltonicity or the containment of a spanning star.
We also investigate local conditions that can be certified by looking at only a subset of the vertex set. In these cases a capacity-type asymptotic invariant is defined and when the condition is to contain a certain subgraph this invariant is shown to be a simple function of the chromatic number of this required subgraph. This is proven using classical results from extremal graph theory. Several variants are considered and the paper ends with a collection of open problems.

\bigskip
\par\noindent
{\em Key words and phrases:} extremal problems,
perfect $1$-factorization, induced subgraphs,
the regularity lemma
\smallskip
\par\noindent
{\em AMS MSC:} 05C35, 05C51, 05C70, 94B25

\end{abstract}

\section{Introduction}

Celebrated problems of extremal combinatorics may get an exciting new flavour when the presence of some special structure is imposed in the condition. A prominent example is the famous Simonovits-S\'os conjecture \cite{SimSos} proven by Ellis,
Filmus and Friedgut \cite{EFF},
which determines the maximum possible cardinality of a family
of graphs on $n$ labeled vertices in which
the intersection of any two members contains a triangle. (The result of \cite{EFF} shows, along with several far reaching generalizations, that the best is to take all graphs containing a given triangle, just as it was conjectured in \cite{SimSos}. This is clearly reminiscent of the Erd\H{o}s-Ko-Rado theorem \cite{EKR}.) As another example we can also mention the Ramsey type problem investigated in \cite{trif} that was also initiated by a question of
S\'os and can be considered as a graph version of the first unsolved case of the so-called perfect hashing problem. (For details we refer to \cite{trif}).

In this paper we study several problems we arrive to if the basic code distance problem (how many binary sequences of a given length can be given at most if any two differ in at least a given number of coordinates) is
modified so that we do not prescribe the minimum distance of any two
codewords but require that they  differ in some specific structure. In particular, just as in the Simonovits-S\'os problem we seek the largest family of (not necessarily induced) subgraphs of a complete graph such that the symmetric difference of the edge sets of any two graphs in the family has some required property. We will consider properties like connectedness, Hamiltonicity, containment of a triangle and some more. Formally all these can be described by saying that the graph defined by the symmetric difference of the edge sets of any two of our graphs belongs to a prescribed family of graphs (namely those that are connected, contain a Hamiltonian cycle, or contain a triangle, etc.)

Let ${\cal F}$ be a fixed class of graphs. A graph family ${\cal G}$ on $n$ labeled vertices is called ${\cal F}$-good if for any pair $G,G'\in {\cal G}$ the graph $G\oplus G'$ defined by $$V(G\oplus G')=V(G)=V(G')=[n],$$ where $[n]=\{1,\dots,n\}$ and $$E(G\oplus G')=\{e: e\in (E(G)\setminus E(G'))\cup (E(G')\setminus E(G))\}$$ belongs to $\F$.

Let $M_{\cal F}(n)$ denote the maximum possible size of an ${\cal F}$-good family on $n$ vertices. We are interested in the value of $M_{\cal F}(n)$ for various classes ${\cal F}$. We will give exact answers or both lower and upper bounds in several cases.

We mention that codes where the codewords are described by graphs already appear in the literature. In \cite{Tonchev}, for example, Tonchev looked at the usual code distance problem restricted to codes whose codewords are characteristic vectors of edge sets of graphs. Gray codes on graphs are also considered, see \cite{Mutze}, where the graphs representing the codewords should have some similarity properties if they are consecutive in a certain listing. Problems analogous to
the present ones though restricted to special graph classes were also considered in \cite{KMS} and \cite{CFK}. A very interesting result along these lines is the one in \cite{Kose}.

\medskip
\par\noindent
The paper is organised as follows. In Section~\ref{gbound} we give a general upper bound that will turn out to be sharp in several of the cases we consider.
In Section~\ref{global} we consider classes ${\cal F}$ defined by some global criterion as connectivity or $2$-connectivity, Hamiltonicity or containing a full star, that is, a vertex of degree $n-1$. We determine $M_{\cal F}(n)$ for infinitely many values of $n$ and for all $n$ in the first and the last case. In most of the cases when we give sharp bounds it is via also solving the problem we call dual: we give the largest possible size of a graph family for which the symmetric difference of no two of its members satisfies the original requirement. The case of the full star is an exception in this sense, nevertheless we also solve the dual problem in that case for all even $n$ by using a celebrated lemma of Shearer.
In Section~\ref{local} we consider classes ${\cal F}$ defined by local conditions. This means that for certifying the condition it is enough to see just a special part of the graph pair at hand.  A capacity-type asymptotic invariant is natural to define in these cases. It turns out that when the requirement is that the pairwise symmetric differences contain a certain subgraph then this asymptotic invariant depends only on the chromatic number of the graph to be contained. We also discuss the case when the above containment is required in an induced manner, and obtain similar results in this case. The final section contains a collection of open problems.

\section{A general upper bound}\label{gbound}

To bound $M_{\cal F}(n)$ for various graph classes ${\cal F}$ it will often be useful to also consider the related problem of constructing large graph families in which no pair satisfies the condition prescribed by ${\cal F}$.

\begin{defi}\label{dual}
For a class of graphs ${\cal F}$ let $D_{\F}(n)$ denote the maximum possible size of a graph family on $n$ labeled vertices (that is, each member of the family has $[n]=\{1,\dots,n\}$ as vertex set), the symmetric difference of no two members of which belongs to $\F$. Determining $D_{{\F}}(n)$ will be referred to as the dual problem of determining $M_{\F}(n)$.
\end{defi}

\medskip
\par\noindent
Note that denoting by $\overline{\F}$ the class containing exactly those graphs that do not belong to $\F$
we actually have $$D_{\F}(n)=M_{\overline{\F}}(n),$$ that is the requirement of having no symmetric difference in $\F$ is clearly the same as saying that all symmetric differences belong to the complementary family $\overline{\F}$. Nevertheless, we will use the $D_{\F}(n)$ notation to emphasize the dual nature of the problem in those cases.

\begin{lemma}\label{lem:ub}
For any graph class $\F$ we have $$M_{\F}(n)\cdot D_{\F}(n)\le 2^{n\choose 2}.$$
\end{lemma}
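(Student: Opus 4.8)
The plan is to exploit the $\mathbb{F}_2$-vector space structure that the symmetric difference operation imposes on the set of all $2^{\binom{n}{2}}$ graphs on the vertex set $[n]$. Under $\oplus$ this set forms an abelian group in which every element is its own inverse, so the identity $A\oplus B = C$ is equivalent to $A\oplus C = B$ and to $A = B\oplus C$. I would fix an optimal $\F$-good family $\G$ with $|\G| = M_{\F}(n)$ and an optimal dual family $\mathcal{D}$ with $|\mathcal{D}| = D_{\F}(n)$, and then consider the map $\Phi$ sending a pair $(G,D)\in\G\times\mathcal{D}$ to the graph $G\oplus D$ on $[n]$.

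The heart of the argument is to show that $\Phi$ is injective. Suppose $\Phi(G_1,D_1)=\Phi(G_2,D_2)$, that is, $G_1\oplus D_1 = G_2\oplus D_2$. Rearranging within the group, this is equivalent to $G_1\oplus G_2 = D_1\oplus D_2$. The two sides are now governed by opposite constraints: if $G_1\neq G_2$ then the left-hand side lies in $\F$ because $\G$ is $\F$-good, whereas if additionally $D_1\neq D_2$ then the right-hand side lies in $\overline{\F}$ because $\mathcal{D}$ is a dual family; since the two sides are equal, this is a contradiction. Hence $G_1=G_2$ or $D_1=D_2$, and either equality forces the corresponding symmetric difference to be the empty graph, which in turn forces the other symmetric difference to be empty as well and so the remaining pair of graphs to coincide. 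Thus $(G_1,D_1)=(G_2,D_2)$, establishing injectivity.

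Once injectivity is in place, the conclusion follows immediately by counting: the domain of $\Phi$ has $|\G|\cdot|\mathcal{D}| = M_{\F}(n)\cdot D_{\F}(n)$ elements, while its codomain consists of exactly $2^{\binom{n}{2}}$ graphs, so $M_{\F}(n)\cdot D_{\F}(n)\le 2^{\binom{n}{2}}$.

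I expect the only genuine subtlety, and hence the step to phrase carefully, to be the case analysis closing the injectivity argument, where one must handle the degenerate possibility that one of the two symmetric differences equals the empty graph, \emph{independently} of whether the empty graph happens to belong to $\F$. Both the $\F$-good property and the dual property only constrain differences of \emph{distinct} members, so the argument must proceed through these equalities of graphs rather than through membership in $\F$.
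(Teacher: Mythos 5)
Your proof is correct, but it is not the route the paper takes for Lemma~\ref{lem:ub}. The paper's proof is structural: it forms the auxiliary graph $H_{\F}$ whose vertices are all $2^{\binom{n}{2}}$ graphs on $[n]$, with two graphs adjacent exactly when their symmetric difference lies in $\F$, identifies $M_{\F}(n)=\omega(H_{\F})$ and $D_{\F}(n)=\alpha(H_{\F})$, observes that $H_{\F}$ is vertex-transitive (a Cayley graph of $\mathbb{Z}_2^{\binom{n}{2}}$), and then invokes the general inequality $\alpha(H)\,\omega(H)\le |V(H)|$ for vertex-transitive graphs, proved via the fractional chromatic number. Your argument --- injectivity of the map $(G,D)\mapsto G\oplus D$ on $\G\times\mathcal{D}$, followed by counting --- is more elementary: it needs no auxiliary graph, no vertex-transitivity, and no fractional relaxation, only the group structure of symmetric difference. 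What the paper's route buys is the placement of the bound inside a standard and reusable framework (clique number times independence number in a vertex-transitive graph); what yours buys is self-containedness. In fact, the paper itself records precisely your argument as Remark~\ref{rem:altproof}, immediately after the lemma, as an alternative proof ``with no reference to the fractional chromatic number,'' including the case analysis you single out as the delicate step: when one of $G_1\oplus G_2$ and $D_1\oplus D_2$ is the empty graph, equality of the remaining pair must be deduced from the equality of the two graphs rather than from membership in $\F$, exactly as you do.
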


\proofka
Let us define a graph $H_{\F}$ whose vertices are all the possible (simple) graphs on the vertex set $[n]$. Connect two such vertices if and only if the corresponding pair of graphs have their symmetric difference belonging to $\F$. Then by definition we have $$M_{\F}(n)=\omega(H_{\F}) \ \  {\rm and} \ \ D_{{\F}}(n)=\alpha(H_{\F}),$$
where $\omega(H)$ and $\alpha(H)$ denote the clique number and the independence number of the graph $H$, respectively.
Observe that $H_{\F}$ is vertex-transitive, (in fact it is a Cayley graph
of the group $Z_2^{{n \choose 2}}$). Indeed, if $G_1$ and $G_2$ are two graphs forming vertices of $H_{\F}$ then taking the symmetric difference of all $n$-vertex graphs forming vertices of $H_{\F}$ with the graph $G_1\oplus G_2$ is an automorphism of $H_{\F}$ that maps $G_1$ to $G_2$.
Since a vertex-transitive graph $H$ always satisfies $\alpha(H)\omega(H)\le |V(H)|$ (this can be seen by using that the fractional chromatic number $\chi_f(H)$ always satisfies $\omega(H)\le\chi_f(H)$, while if $H$ is a vertex-transitive graph we also have $\chi_f(H)=\frac{|V(H)|}{\alpha(H)}$, cf. \cite{SchU}), the statement follows.
\qed

\medskip
\par\noindent
The above lemma makes it possible to bound $M_{\F}(n)$ from above by bounding $D_{{\F}}(n)$ from below. In particular, whenever we construct two families of graphs $\A$ and $\B$ on $[n]$ such that $A,A'\in \A$ implies $A\oplus A'\in \F$ and $B,B'\in\B$ implies $B\oplus B'\notin\F$, while $|\A||\B|=2^{n\choose 2}$, then we know that $|\A|$ and $|\B|$ realize the optimal values $M_{\F}(n)$ and $D_{{\F}}(n)$ for such families. Below we will see several cases when this simple technique can indeed be used to obtain these optimal values. An exception to this phenomenon is also presented by Theorems~\ref{thm:fullstar} and \ref{thm:nostar}.

\medskip
\par\noindent
\begin{remark}\label{rem:altproof} {\rm It is worth noting that Lemma~\ref{lem:ub} can be proven in a different way, with no reference to the fractional chromatic number. Indeed, if $G_1,\dots,G_k$ is an $\F$-good family, while $T_1,\dots,T_m$ is a family satisfying the conditions of the dual problem, then all the symmetric differences of the form $G_i\oplus T_j$ are different, implying $km\le 2^{n\choose 2}$. This is true because if $G_i\oplus T_j$ and $G_r\oplus T_s$ would be the same for some $\{i,j\}\neq\{r,s\}$, then $(G_i\oplus T_j)\oplus(G_r\oplus T_s)$ would be the empty graph that could also be written (by commutativity and associativity of the symmetric difference) as $(G_i\oplus G_r)\oplus(T_j\oplus T_s)$. This would mean that $G_i\oplus G_r$ and $T_j\oplus T_s$ are two identical graphs. But if one of them is the empty graph, then the other cannot be empty and if both are nonempty, then one of them belongs to $\F$ while the other one does not, so this is impossible. $\Diamond$}
\end{remark}

\section{Global conditions}\label{global}

\subsection{Connectivity}

When we speak about the class of connected graphs in the following theorem, we mean
graphs with a single connected component, and hence without isolated vertices.
\begin{thm}~\label{thm:conn}
Let $\F_c$ denote the class of connected graphs. Then $$M_{\F_c}(n)=2^{n-1}.$$
\end{thm}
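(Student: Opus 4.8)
The plan is to prove the two bounds $M_{\F_c}(n)\ge 2^{n-1}$ and $M_{\F_c}(n)\le 2^{n-1}$ separately, via exactly the paired-construction technique described right after Lemma~\ref{lem:ub}. That is, I will exhibit an $\F_c$-good family $\A$ of size $2^{n-1}$ together with a dual family $\B$ (no symmetric difference connected) of size $2^{\binom{n}{2}-(n-1)}=2^{\binom{n-1}{2}}$. Since $|\A|\,|\B|=2^{\binom{n}{2}}$, Lemma~\ref{lem:ub} forces both families to be simultaneously optimal.

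For the lower bound, my construction is the family of \emph{cuts} of $K_n$. For $S\subseteq[n-1]$ let $G_S$ be the complete bipartite graph with edge set $\delta(S)=\{\{i,j\}:i\in S,\ j\in[n]\setminus S\}$, and set $\A=\{G_S:S\subseteq[n-1]\}$. The $2^{n-1}$ subsets of $[n-1]$ give pairwise distinct cuts, so $|\A|=2^{n-1}$. The key point is that cuts are closed under symmetric difference: reading edge sets as vectors over $\mathbb{F}_2$, an edge $\{i,j\}$ lies in $\delta(S)$ iff exactly one of $i,j$ is in $S$, from which $G_S\oplus G_{S'}=\delta(S)\oplus\delta(S')=\delta(S\triangle S')$. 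For distinct $S,S'\subseteq[n-1]$ the set $T=S\triangle S'$ is a nonempty proper subset of $[n]$ (nonempty since $S\ne S'$, proper since $n\notin T$), so $\delta(T)$ is the complete bipartite graph between $T$ and $[n]\setminus T$ with both sides nonempty; such a graph is spanning and connected. Hence every pairwise symmetric difference lies in $\F_c$, and $\A$ is $\F_c$-good.

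For the upper bound I pass to the dual quantity $D_{\F_c}(n)$. Here I take $\B$ to be the family of all graphs on $[n]$ with no edge incident to vertex $n$, i.e.\ all subgraphs of the complete graph on $[n-1]$, so that $|\B|=2^{\binom{n-1}{2}}=2^{\binom{n}{2}-(n-1)}$. For any two members of $\B$, vertex $n$ is isolated in their symmetric difference, which is therefore disconnected; thus no symmetric difference belongs to $\F_c$ and $D_{\F_c}(n)\ge 2^{\binom{n}{2}-(n-1)}$. Lemma~\ref{lem:ub} then gives $M_{\F_c}(n)\le 2^{\binom{n}{2}}/D_{\F_c}(n)\le 2^{n-1}$, matching the lower bound.

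The conceptual heart of the argument is the lower-bound construction: recognizing that the cut space of $K_n$ is both large (it has $2^{n-1}$ elements) and has the property that \emph{every} nonzero element is a spanning connected graph, because a nonempty proper cut is a complete bipartite graph. The remaining work is routine bookkeeping — checking that distinct subsets of $[n-1]$ index distinct cuts and that $\delta(S\triangle S')$ is genuinely spanning and connected — while the dual side is immediate once one notes that pinning down a common isolated vertex destroys connectivity.
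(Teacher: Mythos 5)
Your proof is correct and is essentially the paper's own argument: the upper bound uses the identical dual family (all graphs with vertex $n$ isolated) together with Lemma~\ref{lem:ub}, and your lower-bound family of cuts $\delta(S)$ is exactly the complement of the paper's family of disjoint unions of two cliques, so the pairwise symmetric differences are the very same complete bipartite graphs. The paper itself notes this complementation (which makes the family a linear code, as you exploit) in Remark~\ref{rem:linear}.
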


\proofka
First we give a very simple dual family $\B_c$. Let it consist of all graphs on $[n]$ in which the vertex labeled $n$ is isolated. Clearly $|\B_c|=2^{{n-1}\choose 2}$ (that is the number of all graphs on $[n-1]$) and $n$ is also isolated in the symmetric difference of any two of them, so no such symmetric difference is connected, This gives $D_{{\F_c}}(n)\ge 2^{{n-1}\choose 2}$ and thus by Lemma~\ref{lem:ub} we have
$$M_{\F_c}(n)\le 2^{{n\choose 2}-{n-1\choose 2}}=2^{n-1}.$$

Now we show that this upper bound can be attained.
Let the family $\A_c$ consist of all those graphs on $[n]$ that are the vertex-disjoint union of two complete graphs
(where each vertex belongs to one of them)
including the case when one of the two is on the empty set. Clearly, the number of these graphs is just half the number of subsets of $[n]$, that is exactly $2^{n-1}$. All we have to show is that the symmetric difference of any two of these graphs is connected. Choose two arbitrary graphs $G$ and $G'$ from our family. Let $G$ be the union of complete graphs on the complementary vertex sets $K$ and $L$, while $G'$ be the same on $K'$ and $L'$. Let $A=K\cap L', B=L'\cap L, C=L\cap K'$ and $D=K'\cap K$. It is possible that one, but only one of $A,B,C,D$ is empty. The edges of $G\oplus G'$ are all the edges of the complete bipartite graph with partite classes $A\cup C$ and $B\cup D$, so it must be connected.
\qed

With just a little more consideration one can also treat the case of $2$-connectedness at least for even $n$.

\begin{thm}~\label{thm:2conn}
Let $\F_{2c}$ denote the class of $2$-connected graphs. Then if $n$ is even, we have $$M_{\F_{2c}}(n)=2^{n-2}.$$
\end{thm}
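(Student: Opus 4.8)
The plan is to mirror the strategy of Theorem~\ref{thm:conn}: I would exhibit a dual family $\B_{2c}$ and a good family $\A_{2c}$ whose sizes multiply to $2^{\binom n2}$, so that Lemma~\ref{lem:ub} pins down both extremal numbers simultaneously. The guiding observation is that a $2$-connected graph on at least three vertices has minimum degree at least $2$, and that in the relevant clique construction the symmetric differences are exactly complete bipartite graphs, which are $2$-connected precisely when both sides are large.

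For the upper bound I would force a vertex of small degree into every symmetric difference. Let $\B_{2c}$ consist of all graphs on $[n]$ whose neighborhood of the vertex labeled $n$ is either $\emptyset$ or $\{1\}$, with the remaining $\binom{n-1}2$ potential edges inside $[n-1]$ left free. Then for any $B,B'\in\B_{2c}$ the degree of $n$ in $B\oplus B'$ equals $|N_B(n)\triangle N_{B'}(n)|\le 1$, so $B\oplus B'$ has a vertex of degree at most $1$ and cannot be $2$-connected. Since $|\B_{2c}|=2\cdot 2^{\binom{n-1}2}=2^{\binom{n-1}2+1}$, Lemma~\ref{lem:ub} yields $M_{\F_{2c}}(n)\le 2^{\binom n2-\binom{n-1}2-1}=2^{n-2}$.

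For the lower bound I would refine the clique construction from Theorem~\ref{thm:conn}. Taking again graphs that are disjoint unions of two complete graphs on complementary sets $K,\bar K$, the edge-by-edge computation in that proof shows that the symmetric difference of the graphs attached to the partitions $\{K,\bar K\}$ and $\{K',\bar{K'}\}$ is the complete bipartite graph with parts $X=K\triangle K'$ and $Y=[n]\setminus X$. Such a $K_{|X|,|Y|}$ is $2$-connected precisely when $|X|\ge 2$ and $|Y|\ge 2$, i.e.\ when $2\le |K\triangle K'|\le n-2$. Identifying a partition with the indicator vector of $K$ up to complementation, I therefore need a family of partitions forbidding Hamming distance $1$ (equivalently $n-1$). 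Here is where $n$ even enters: take $\A_{2c}$ to be all such two-clique graphs whose part $K$ has even size. Since $n$ is even this collection is consistent under $K\mapsto\bar K$, it contains $2^{n-2}$ distinct partitions, and any two even-weight indicators lie at even Hamming distance, so $2\le|K\triangle K'|\le n-2$ for distinct members. Hence every symmetric difference is a complete bipartite graph with both parts of even size at least $2$, thus $2$-connected, giving $M_{\F_{2c}}(n)\ge 2^{n-2}$.

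The routine parts — the verification that the symmetric difference is exactly $K_{|X|,|Y|}$ and the counting of the two families — are inherited from Theorem~\ref{thm:conn}. The one genuinely new point, and the main obstacle, is ruling out the degenerate complete bipartite graphs with a side of size $0$ or $1$ (which are disconnected or have a cut vertex). The parity restriction is the device that eliminates these in one stroke, and it is exactly this step that forces the hypothesis that $n$ be even: for odd $n$ the distance $n-1$ is itself even, so the parity argument would still admit a star $K_{n-1,1}$ as a symmetric difference, which is not $2$-connected.
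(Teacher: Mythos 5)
Your proposal is correct and follows essentially the same route as the paper: the upper bound uses the dual family of graphs in which vertex $n$ is isolated or joined only to one fixed vertex (giving $2^{\binom{n-1}{2}+1}$ graphs and hence $M_{\F_{2c}}(n)\le 2^{n-2}$ via Lemma~\ref{lem:ub}), and the lower bound restricts the two-clique construction of Theorem~\ref{thm:conn} to bipartitions with both classes of even size, so that every symmetric difference is a complete bipartite graph with both sides of size at least $2$. Your parity/Hamming-distance formulation is just a more explicit writing of the same argument, including the correct count of $2^{n-2}$ partitions.
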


\proofka
The proof is a modification of the previous one, therefore we use the notation introduced there. The construction given there may result in symmetric differences that are not $2$-connected only if $A\cup C$ or $B\cup D$ contains only one element. For even $n$ this can be avoided if we consider only such graphs in our construction where the bipartition of $[n]$ defining the individual graphs has an even number of elements in both partite classes $K$ and $L$. This proves the lower bound.

For the upper bound we consider all graphs in which the vertex $n$ is either isolated or it has one fixed neighbor, say $n-1$. The symmetric difference of any two such graphs is not $2$-connected, since $n$ has at most one neighbor in it. The number of such graphs
is just twice the number of graphs in which $n$ is an isolated point,
that is, $2^{{{n-1}\choose 2}+1}$ proving the matching upper bound by Lemma~\ref{lem:ub}.
\qed

\medskip
\par\noindent
\begin{remark}\label{rem:evenodd} {\rm The upper bound proven in Theorem~\ref{thm:2conn} clearly holds also for odd $n$ but we have not found a matching construction in general. For $n=3$ a triangle and the empty graph would do, still achieving the upper bound. But for larger odd $n$ the best we could do is to take only those graphs from our construction for which in the corresponding bipartition the smaller partition class has an odd number of elements if $n\equiv 1\ ({\rm mod}\ 4)$ and it has an even number of elements if $n\equiv 3\ ({\rm mod}\ 4)$. The number of graphs obtained this way is $2^{n-2}-{(n-2)\choose {(n-3)/2}}$. $\Diamond$}
\end{remark}

\medskip
\par\noindent
 \begin{remark}\label{rem:linear} {\rm Changing the graphs to their complements in the proofs of Theorems~\ref{thm:conn} and \ref{thm:2conn} makes these graph families vector spaces over the $2$-element field, while they still satisfy the conditions as the symmetric differences do not change by complementation (or by taking the symmetric difference of all elements with any fixed graph which is the complete graph in case of complementation). $\Diamond$}
\end{remark}

\medskip
\par\noindent
It does not sound surprising that if we step further on to $k$-connectedness for $k>2$ then the problem becomes rather more complicated. Nevertheless, if we insist on linear codes, that is graph families closed under the symmetric difference operation then for $k=3$ we can still determine the largest possible cardinality for infinitely many values on $n$ using Hamming codes.

\begin{thm} \label{thm:3connlin}
Let $\F_{3c}$ be the class of $3$-connected graphs and let $M^{(\ell)}_{\F_{3c}}(n)$ denote the size of a largest possible linear graph family on vertex set $[n]$ any two members of which have a $3$-connected symmetric difference. If $n=2^k-1$ for some integer $k\ge 2$, then
$$M^{(\ell)}_{\F_{3c}}(n)=2^{n-k-1}.$$
\end{thm}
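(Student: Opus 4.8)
\medskip\noindent
\textbf{Proof proposal.} The first move is to exploit linearity. If $\G$ is closed under $\oplus$, then for $G,G'\in\G$ the symmetric difference $G\oplus G'$ is again a member of $\G$, and conversely every nonzero member arises this way. Hence the requirement that all pairwise symmetric differences be $3$-connected is \emph{equivalent} to the requirement that every nonzero member of $\G$ be $3$-connected. So the task becomes: determine the largest dimension of a subspace $C$ of the edge space $\mathbb{F}_2^{\binom n2}$ in which every nonzero vector, read as a graph on $[n]$, is $3$-connected. Throughout I would only use the consequence that a $3$-connected graph has minimum degree at least $3$, and in particular has no isolated vertex.

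For the upper bound I would project onto the star of a single vertex. Fix $v\in[n]$ and let $p_v\colon C\to\mathbb{F}_2^{\,n-1}$ send a graph to the indicator vector of the neighbourhood of $v$. If $p_v(G)=0$ then $v$ is isolated in $G$, forcing $G=0$ since a nonzero member has $\deg(v)\ge 3$; thus $p_v$ is injective and $\dim C=\dim p_v(C)$. Moreover any nonzero $p_v(G)$ has Hamming weight $\deg_G(v)\ge 3$, so $C_v:=p_v(C)$ is a binary \emph{linear} code of length $n-1$ and minimum distance at least $3$. The sphere-packing (Hamming) bound then gives $2^{\dim C_v}\cdot n\le 2^{\,n-1}$, and since $n=2^k-1>2^{k-1}$ this yields $2^{\dim C_v}<2^{\,n-k}$, that is $\dim C\le n-k-1$, which already matches the claimed value.

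For the lower bound I would realise the extremal family inside the cut space. A cut $\delta(S)$, consisting of all edges between $S$ and $[n]\setminus S$, is the complete bipartite graph $K_{|S|,\,n-|S|}$, whose connectivity is $\min(|S|,n-|S|)$; hence $\delta(S)$ is $3$-connected exactly when $3\le|S|\le n-3$. Since $\delta(S)\oplus\delta(S')=\delta(S\oplus S')$, letting $S$ range over a binary linear code $U\subseteq\mathbb{F}_2^n$ produces a linear graph family, and I would take $U$ to be the Hamming code of length $n=2^k-1$, of dimension $n-k$ and minimum distance $3$. It contains the all-ones word $\mathbf 1$, because each row of its parity-check matrix has $2^{k-1}$ ones, an even number for $k\ge2$. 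Minimum distance $3$ forbids weights $1$ and $2$; membership of $\mathbf 1$ then forbids weights $n-1$ and $n-2$ as well, so every codeword other than $0$ and $\mathbf 1$ has weight in $[3,n-3]$ and gives a $3$-connected cut. As $0$ and $\mathbf 1$ are the only words annihilated by $\delta$, the family has $|U|/2=2^{\,n-k-1}$ members, matching the upper bound.

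The technical heart, and the step I expect to need the most care, is the upper bound. The bare ``no isolated vertex'' observation alone yields only $\dim C\le n-1$, the connectivity bound of Theorem~\ref{thm:conn}; the extra saving of exactly $k$ is what must be extracted, and it comes from upgrading the star-projection to a minimum-distance-$3$ code and then using that $2^{\,n-1}/(2^k-1)<2^{\,n-k}$ precisely because $n=2^k-1$. Making this final integer inequality land on $n-k-1$ rather than something weaker, together with verifying the Hamming-code facts used in the construction (dimension $n-k$, minimum distance $3$, and membership of $\mathbf 1$), are the points deserving attention.
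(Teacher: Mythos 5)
Your proof is correct, and it splits naturally into a part that coincides with the paper and a part that genuinely differs. The lower bound is the paper's own construction in different clothing: the paper also takes the length-$n$ Hamming code and assigns to each codeword the complete bipartite graph on the bipartition it defines, i.e.\ exactly your cuts $\delta(\mathrm{supp}(\mathbf{c}))$; the paper verifies $3$-connectivity pairwise (both partite classes of the symmetric difference have size at least $3$ because $\mathbf{c}'$ differs from both $\mathbf{c}$ and $\bar{\mathbf{c}}$ in at least $3$ coordinates), whereas you use linearity to reduce to checking single nonzero members --- the same computation, slightly more cleanly organized. The upper bound is where you diverge. The paper argues in two steps: it first bounds \emph{arbitrary} (not necessarily linear) families via Lemma~\ref{lem:ub}, exhibiting the dual family of all graphs in which vertex $n$ has degree at most $1$ (of size $n2^{{n-1 \choose 2}}$), which gives $M_{\F_{3c}}(n)\le 2^{n-1}/n$; it then invokes the fact that a linear family has cardinality a power of $2$ to round this down to $2^{n-k-1}$. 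You instead exploit linearity from the outset: projection onto the star at a fixed vertex is injective on the code (no nonzero member has an isolated vertex) and yields a binary linear code of length $n-1$ with minimum distance at least $3$, to which the sphere-packing bound applies. The two counts are in fact identical --- the paper's dual family is precisely the preimage under your projection of the radius-$1$ Hamming ball, and Lemma~\ref{lem:ub} (see Remark~\ref{rem:altproof}) is the packing argument --- but the paper's route buys the stronger intermediate statement $M_{\F_{3c}}(n)\le 2^{n-1}/n$ for all families, while yours is self-contained (no product lemma, no separate power-of-$2$ fact, since a subspace automatically has size $2^{\dim}$) and makes transparent that the problem is, after projection, a pure coding-theory question. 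One small point of contrast: your parity-check computation showing $\mathbf{1}$ lies in the Hamming code (each row of the check matrix has $2^{k-1}$ ones) supplies a proof of what the paper simply quotes as the known closure of Hamming codes under complementation.
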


\proofka
First we prove that $D_{{\F}_{3c}}(n)\ge n2^{{n-1\choose 2}}$ holds in general. Consider the family of all graphs on vertex set $[n]$ in which the degree of vertex $n$ is at most $1$. There are exactly $n2^{{n-1\choose 2}}$ such graphs. The symmetric difference of any two of these graphs is at most $2$-connected, since the vertex $n$ has degree at most $2$ in all these symmetric differences. This proves the claimed inequality and by Lemma~\ref{lem:ub} this implies
$M_{\F_{3c}}(n)\le 2^{n-1}/n$.
\smallskip
\par\noindent
It is well-known that if a family of subsets of a finite set contains the empty set and is closed under the symmetric difference operation then the cardinality of this set must be a power of $2$. This follows immediately from linear algebra and the fact that such a family forms a vector space over $GF(2)$, cf. e.g. Lemma 3.1 in Kozlov's book \cite{Kozlov} where a simple combinatorial proof of this fact is also presented. Since a linear graph family code on $[n]$ can be viewed as a collection of subsets of $E(K_n)$, this implies that $M^{(\ell)}_{\F_{3c}}(n)$ is a power of $2$. Since we obviously have $M^{(\ell)}_{\F_{3c}}(n)\le M_{\F_{3c}}(n)$, the upper bound proved above implies $M^{(\ell)}_{\F_{3c}}(n)\le 2^d$ with $d=\lfloor\log_2 \frac{1}{n}2^{n-1}\rfloor$ giving $$M^{(\ell)}_{\F_{3c}}(n)\le 2^{n-k-1}$$ for $n=2^k-1, k\ge 2$, which proves the required upper bound.
\smallskip
\par\noindent
For the lower bound consider the Hamming code ${\cal C}_H(n)$ with length $n=2^k-1$ that exists for every $k\ge 2$. (For a nice quick account on Hamming codes see e.g. \cite{Berlekamp}.) It is a linear code with minimum distance $3$ that consists of $2^{n-k}$ binary codewords having the property that if ${\mbf c}=(c_1,\dots,c_n)$ belongs to the code then so does also $\bar{\bf c}=(\bar c_1,\dots,\bar c_n)$ where $\bar c_i=1-c_i$. For each codeword ${\mbf c}\in {\cal C}_H(n)$ consider the bipartition of $[n]$ into the subsets $K_{\msbf c},L_{\msbf c}$, where $K_{\msbf c}=\{i: c_i=0\}, L_{\msbf c}=\{i:c_i=1\}$ and the complete bipartite graph $G_{K_{\msbf c},L_{\msbf c}}$ with partite classes $K_{\msbf c},L_{\msbf c}$. Note that by the above mentioned property of Hamming codes we have ${\mbf c}\in {\cal C}_H(n)$ if and only if $\bar{{\mbf c}}\in {\cal C}_H(n)$ and thus since
$G_{K_{{\msbf c}},L_{{\msbf c}}}=G_{K_{\bar{\msbf c}},L_{{\bar{\msbf c}}}}$, we get $\frac{1}{2}|{\cal C}_H(n)|=2^{n-k-1}$ different complete bipartite graphs this way. All we have to prove is that the symmetric difference of any two of our graphs is $3$-connected. This is equivalent to show that if ${\bf c}'\neq {\bf c},\bar{\bf c}$, then the cardinality of both partite classes of $G_{K_{\msbf c},L_{\msbf c}}\oplus G_{K_{{\msbf c}'},L_{{\msbf c}'}}$, that is of $(K_{\msbf c}\cap K_{{\msbf c}'})\cup (L_{\msbf c}\cap L_{{\msbf c}'})$ and $(K_{\msbf c}\cap L_{{\msbf c}'})\cup (K_{{\msbf c}'}\cap L_{\msbf c})$ is at least $3$. However, this immediately follows from the fact that the codeword ${\bf c}'$ must differ from both ${\bf c}$ and $\bar{{\bf c}}$ in at least $3$ coordinates. This completes the proof. \qed

\subsection{Hamiltonicity}

A graph is connected if and only if it contains a spanning tree. Next we consider what happens if we require the containment of specific spanning trees: a path in this subsection and a star in the next one.

\medskip
\par\noindent
\begin{thm}\label{thm:Hp}
Let $\F_{H{\rm p}}$ denote the class of graphs containing a Hamiltonian path. Then for infinitely many values of $n$ we have $$M_{\F_{H{\rm p}}}(n)=2^{n-1}.$$
In particular, this holds whenever $n=p$ or $n=2p-1$ for some odd prime $p$.
\end{thm}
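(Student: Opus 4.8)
The plan is to match the general upper bound of Lemma~\ref{lem:ub} with an explicit construction, exactly as in Theorem~\ref{thm:conn}, the extra ingredient being a \emph{perfect $1$-factorization}. For the upper bound I would reuse the dual family from the connectivity proof: let $\B$ consist of all graphs on $[n]$ in which the vertex $n$ is isolated. Since a graph with an isolated vertex cannot contain a Hamiltonian path, the symmetric difference of any two members of $\B$ lies outside $\F_{H{\rm p}}$, so $D_{\F_{H{\rm p}}}(n)\ge 2^{{n-1\choose 2}}$ and Lemma~\ref{lem:ub} yields $M_{\F_{H{\rm p}}}(n)\le 2^{{n\choose 2}-{n-1\choose 2}}=2^{n-1}$.

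For the matching lower bound I would build a linear code of graphs on the vertex set $\mathbb{Z}_n$ out of a perfect $1$-factorization of $K_{n+1}$. Adjoin a vertex $\infty$ to $\mathbb{Z}_n$ and take a perfect $1$-factorization $F_0,\dots,F_{n-1}$ of $K_{n+1}$ on $\mathbb{Z}_n\cup\{\infty\}$; deleting $\infty$ turns each $F_s$ into a near-perfect matching $M_s$ on $\mathbb{Z}_n$, and these $n$ matchings are edge-disjoint and partition $E(K_n)$. The defining property of a perfect $1$-factorization is that $F_s\cup F_{s'}$ is a Hamiltonian cycle of $K_{n+1}$ for all $s\ne s'$; deleting $\infty$ from this cycle shows that $M_s\cup M_{s'}=M_s\oplus M_{s'}$ is a Hamiltonian path on $\mathbb{Z}_n$.

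Now for $S\subseteq\mathbb{Z}_n$ set $G_S=\bigoplus_{s\in S}M_s$, so that $S\mapsto G_S$ is an injective linear map with $G_S\oplus G_T=G_{S\oplus T}$. I would take $\A$ to be the image of the even-weight subspace, $\A=\{G_S:\ |S|\ \text{even}\}$, which has exactly $2^{n-1}$ members. For distinct $G_S,G_T\in\A$ the index set $S\oplus T$ is nonempty of even size, so it contains two distinct elements $s,s'$; then $M_s\cup M_{s'}\subseteq G_{S\oplus T}=G_S\oplus G_T$, and since adding edges preserves the existence of a Hamiltonian path, $G_S\oplus G_T$ contains one. Hence $\A$ is $\F_{H{\rm p}}$-good and $M_{\F_{H{\rm p}}}(n)\ge 2^{n-1}$. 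This edge-monotonicity is what makes the combinatorial part painless: I never need to analyze $G_S$ for large $S$, only the pairwise unions of two matchings.

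The real content, and the place where the hypotheses $n=p$ and $n=2p-1$ enter, is the existence of a perfect $1$-factorization of $K_{n+1}$. For $n=p$ this can be given explicitly: on $\mathbb{Z}_p\cup\{\infty\}$ let $F_s$ join $\infty$ to $s/2$ and pair $x$ with $s-x$; then $M_s\cup M_{s'}$ is traced by alternately applying the two reflections, two consecutive steps amount to a translation by $s-s'$, and this generates $\mathbb{Z}_p$ precisely because $p$ is prime, so the union is a single Hamiltonian path. For $n=2p-1$ the analogous sum construction can fail (note $2p-1$ need not be prime), and here I would instead invoke the classical perfect $1$-factorization of $K_{2p}$ for prime $p$. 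Thus the main obstacle is not the coding-theoretic part but establishing these two infinite families of perfect $1$-factorizations of $K_{n+1}$; once they are in hand, everything follows from Lemma~\ref{lem:ub} and the monotonicity of traceability.
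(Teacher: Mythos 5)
Your proposal is correct and follows essentially the same route as the paper: the upper bound via the isolated-vertex dual family and Lemma~\ref{lem:ub}, and the lower bound by taking even-size unions of the $n$ near-perfect matchings obtained from a perfect $1$-factorization of $K_{n+1}$ by deleting one vertex. The only cosmetic difference is that you spell out Kotzig's explicit construction of the perfect $1$-factorization of $K_{p+1}$ for $n=p$, where the paper simply cites Kotzig (and, like you, cites Anderson--Nakamura for $n+1=2p$).
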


\medskip
\par\noindent
To prove the above theorem we will refer to the following old conjecture that is known to be true in several special cases. To state it we need the notion of perfect $1$-factorization. It means the partition of the edge set of a graph into perfect matchings such that the union of any two of them is a Hamiltonian cycle.

\medskip
\par\noindent
{\bf Perfect $1$-factorization conjecture (P1FC)} (Kotzig~\cite{Kotzig}). {\em The complete graph $K_n$ has a perfect $1$-factorization for all even $n>2$.}

\medskip
\par\noindent
This conjecture is still open in general, however it is known to hold in several special cases, for example, whenever $n=p+1$ (Kotzig~\cite{Kotzig}) or $n=2p$ for some odd prime $p$ (Anderson~\cite{Anderson} and Nakamura~\cite{Nakamura}, cf. also Kobayashi~\cite{Kobayashi}). For a recent survey, see Rosa~\cite{Rosa}, according to which the smallest open case of the conjecture is $n=64$.

\medskip
\par\noindent
{\it Proof of Theorem~\ref{thm:Hp}.}
Since Hamiltonian paths are connected, it follows from the proof of Theorem~\ref{thm:conn} that $2^{n-1}$ is again an upper bound. Now we show that it is also a lower bound whenever the Perfect $1$-factorization conjecture holds for $n+1$. (Note that if the conjecture is true, then this means all odd numbers at least $3$, while for $1$ our statement is void.)

\medskip
\par\noindent
Let $n$ be an odd number for which $K_{n+1}$ has a perfect $1$-factorization $\M$ and $v$ a fixed vertex of $K_{n+1}$. Note that deleting the edge incident to $v$ from all matchings belonging to $\M$ we obtain $n$ matchings of $K_n$ such that the union of any two of them is a Hamiltonian path in $K_n:=K_{n+1}\setminus \{v\}$. Now consider all those subgraphs of $K_n$ that can be obtained as the union of an even number of these $n$ matchings. Clearly, the symmetric difference of any two of them is also the union of at least two of these matchings and thus contains a Hamiltonian path. The number of graphs obtained this way is $2^{n-1}$, matching the upper bound.
\qed

\medskip
\par\noindent
The case of Hamiltonian cycles can be treated essentially the same way.

\medskip
\par\noindent
\begin{thm}\label{thm:Hc}
Let $\F_{H{\rm c}}$ denote the class of graphs containing a Hamiltonian cycle. For all even values of $n$ for which the P1FC holds, we have $$M_{\F_{H{\rm c}}}(n)=2^{n-2}.$$ In particular, this is the case if $n=p+1$ or $n=2p$ for some odd prime $p$.
\end{thm}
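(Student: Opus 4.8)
The plan is to mirror the proof of Theorem~\ref{thm:Hp} almost verbatim, adjusting for the fact that we now want Hamiltonian cycles rather than Hamiltonian paths, and that the relevant parameter drops from $2^{n-1}$ to $2^{n-2}$. For the upper bound, I first observe that every graph containing a Hamiltonian cycle is in particular $2$-connected. Hence $\F_{H{\rm c}}\subseteq\F_{2c}$, and the dual construction used in the proof of Theorem~\ref{thm:2conn} applies without change: the family of all graphs on $[n]$ in which vertex $n$ is either isolated or has the single fixed neighbor $n-1$ has symmetric differences in which $n$ has degree at most $1$, so none of these symmetric differences is $2$-connected, let alone Hamiltonian. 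This family has size $2^{{n-1\choose 2}+1}$, so $D_{\F_{H{\rm c}}}(n)\ge 2^{{n-1\choose 2}+1}$, and Lemma~\ref{lem:ub} yields $M_{\F_{H{\rm c}}}(n)\le 2^{{n\choose 2}-{n-1\choose 2}-1}=2^{n-2}$.

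For the matching lower bound I would use the P1FC directly, now for $K_n$ itself rather than for $K_{n+1}$. Suppose $n$ is even and $K_n$ admits a perfect $1$-factorization $\M=\{M_1,\dots,M_{n-1}\}$ into $n-1$ perfect matchings, with the property that the union of any two of them is a Hamiltonian cycle. The idea is to take as our family all subgraphs of $K_n$ obtainable as the union (equivalently, the symmetric difference, since the matchings are edge-disjoint) of an even number of these $n-1$ matchings. The symmetric difference of any two such graphs is itself a union of an even number of the $M_i$'s, and provided this number is at least $2$ it contains the union of some pair $M_i\cup M_j$, which is a Hamiltonian cycle. The remaining point to check is the count: the subgraphs indexed by even-size subsets of an $(n-1)$-element set number $2^{(n-1)-1}=2^{n-2}$, matching the upper bound exactly.

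The one genuinely delicate step, as in the previous theorem, is ensuring that two distinct even-size subsets $S,T\subseteq\{1,\dots,n-1\}$ give rise to \emph{distinct} graphs and that their symmetric difference is a \emph{nonempty} union of matchings, so that we do not accidentally land on the empty graph. Since the $M_i$ are edge-disjoint and nonempty, distinct subsets yield distinct unions, and the symmetric difference of the graphs indexed by $S$ and $T$ is precisely the union indexed by the symmetric difference $S\triangle T$, which is nonempty of even cardinality whenever $S\neq T$; being even and nonempty it has at least two elements, so it contains a Hamiltonian cycle. Thus the family is $\F_{H{\rm c}}$-good and has size $2^{n-2}$.

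Finally, to justify the concrete values of $n$ in the statement, I would invoke the known cases of the P1FC quoted just before Theorem~\ref{thm:Hp}: $K_n$ has a perfect $1$-factorization whenever $n=p+1$ (Kotzig) or $n=2p$ (Anderson, Nakamura) for an odd prime $p$. These are exactly the even values of $n$ listed, so the theorem follows in those cases. I expect no real obstacle here beyond the bookkeeping parity argument above; the substantive mathematical input—existence of the perfect $1$-factorization—is imported as a hypothesis, and the $2$-connectivity containment that drives the upper bound is immediate.
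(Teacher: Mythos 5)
Your proposal is correct and follows essentially the same route as the paper: the upper bound via the $2$-connectivity dual family from Theorem~\ref{thm:2conn} together with Lemma~\ref{lem:ub}, and the lower bound via unions of evenly many matchings from a perfect $1$-factorization of $K_n$. The extra bookkeeping you supply (distinctness of the unions and nonemptiness of the symmetric differences) is exactly what the paper leaves implicit, and it is fine.
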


\medskip
\par\noindent
\proofka
Since Hamiltonian cycles are $2$-connected, it follows from the proof of Theorem~\ref{thm:2conn} that $2^{n-2}$ is again an upper bound.

\medskip
\par\noindent
Let $n$ be an even number for which the P1FC holds and let $\M$ be a perfect $1$-factorization of $K_n$. Note that $\M$ contains $n-1$ matchings
(indeed the edge-chromatic number of $K_n$ for even $n$ is $n-1$). Now consider the $2^{n-2}$ graphs we can obtain as the union of an even number of matchings from $\M$. Clearly, the symmetric difference of any two of them contains a Hamiltonian cycle.
\qed

\medskip
\par\noindent
\begin{remark}\label{rem:borsuk} {\rm Since Hamiltonian cycles are $2$-connected graphs the proof of Theorem~\ref{thm:Hc} obviously gives an alternative proof of Theorem~\ref{thm:2conn} for those values of $n$ for which the Perfect $1$-factorization conjecture is known to hold. (The situation is similar for Theorems~\ref{thm:Hp} versus \ref{thm:conn}.) On the other hand, the construction in the proof of Theorem~\ref{thm:2conn} utterly fails to give a good lower bound for the value of $M_{\F_{Hc}}(n)$ investigated in Theorem~\ref{thm:Hc}. Indeed, the symmetric difference of two graphs in the construction given in the proof of Theorem~\ref{thm:2conn} contains a Hamiltonian cycle if and only if the sets denoted by $A\cup C$ and $B\cup D$ in that proof both have cardinality $\frac{n}{2}$ and this happens exactly when the partition classes of the partitions $(K,L)$ and $(K',L')$ are orthogonal in the sense 
that representing these bipartitions by characteristic vectors consisting of $+1$ and $-1$ coordinates in the obvious way, we get a collection of vectors that are pairwise orthogonal. So their number cannot be more than just $n$ and we can give exactly $n$ such vectors if and only if an $n\times n$ Hadamard matrix exists. $\Diamond$}
\end{remark}

\subsection{Containing a spanning star}

We have seen in the previous subsection that if we want every symmetric difference to contain a spanning tree which is a path, then for infinitely many values of $n$ our family can be just as large as if we did not want more than just the connectedness of these symmetric differences. In this subsection we show that if the required spanning tree is a star, then the largest possible family is drastically smaller.

\begin{thm}\label{thm:fullstar}
Let $\F_{S}$ denote the class of graphs containing a spanning star, that is a vertex connected to all other vertices in the graph. Then we have

$$M_{\F_S}(n)
=\left\{\begin{array}{lll}n+1&&\hbox{if }
n\ {\rm is\ odd}\ \\n&&\hbox{if }n\ {\rm is\ even}.\end{array}\right.$$
\end{thm}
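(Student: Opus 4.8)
The plan is to prove the bound $M_{\F_S}(n)\le n+1$ by a short direct argument, sharpen it to $n$ when $n$ is even, and then match each bound by an explicit cyclic construction.

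For the upper bound I would first reduce to families containing the empty graph: replacing every $G$ in an $\F_S$-good family $\G$ by $G\oplus G_0$ for a fixed $G_0\in\G$ leaves every symmetric difference unchanged, so we may assume $\emptyset\in\G$. Then each other member $G$ satisfies $G=G\oplus\emptyset\in\F_S$, so it has a nonempty set $C(G)$ of \emph{centers} (vertices of degree $n-1$). The key point is that two distinct nonempty members cannot share a center: if $v\in C(G)\cap C(G')$ then all $n-1$ edges at $v$ lie in both graphs, hence in neither of $G\oplus G'$, so in $G\oplus G'$ the vertex $v$ is isolated and every other vertex misses its edge to $v$; thus $G\oplus G'$ has no vertex of degree $n-1$, contradicting $G\oplus G'\in\F_S$. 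Consequently the sets $C(G)$ over the nonempty members are pairwise disjoint and nonempty, so there are at most $n$ of them and $|\G|\le n+1$.

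To improve this to $|\G|\le n$ for even $n$ I would analyze the extremal case. If $|\G|=n+1$ then, after the reduction, the $n$ center sets are disjoint nonempty subsets of $[n]$, hence singletons partitioning $[n]$; relabel so that $G_v$ has unique center $v$. For a pair $(u,v)$ the spanning star of $G_u\oplus G_v$ must sit at a third vertex $w$, and then $N_w(G_u)$ and $N_w(G_v)$ partition $[n]\setminus\{w\}$, so $\deg_w(G_u)+\deg_w(G_v)=n-1$. For even $n$ this sum is odd, forcing $\deg_w(G_u)\not\equiv\deg_w(G_v)\pmod 2$ at the resolving vertex of \emph{every} pair, while each $G_v$ carries the odd degree $n-1$ at its own center. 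I would turn this parity incompatibility into a global contradiction; carrying out this parity bookkeeping is the most delicate point of the upper bound.

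For the lower bound when $n$ is odd I would take the vertex set $\mathbb{Z}_n$ and let $G_c$ be the rotation by $c$ of a base graph $G_0$, where $G_0$ is the full star at $0$ together with a reflection-symmetric $\frac{n-3}{2}$-regular graph $R_0$ on $\mathbb{Z}_n\setminus\{0\}$; the family $\{\emptyset\}\cup\{G_c:c\in\mathbb{Z}_n\}$ has size $n+1$. Since $n$ is odd, every pair $(u,v)$ has a midpoint $w=(u+v)/2$, and after translating $w$ to $0$ the two relevant neighborhoods become a set and its negation (by reflection symmetry), so the construction succeeds exactly when $N_0(G_y)$ meets each antipodal pair $\{x,-x\}$ precisely once for every $y\ne 0$. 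This amounts to two simple local conditions on $R_0$ (no edge $\{z,2z\}$, and no two neighbors of $z$ symmetric about $z$), which I have checked produce valid families for small odd $n$ (a near-perfect matching for $n=5$ and a suitable reflection-symmetric $6$-cycle for $n=7$). Exhibiting such an $R_0$ uniformly for all odd $n$ and verifying this transversal property is the main technical obstacle of the whole proof. For even $n$ I would then bootstrap this: run the odd construction on the odd set $\{1,\dots,n-1\}$, obtaining $n$ graphs whose pairwise differences already have a spanning star of degree $n-2$ at a midpoint, and add the vertex $n$, joining it in $G_c$ to $c$ and to the half-arc $\{j:j-c\in\{1,\dots,\tfrac{n-2}{2}\}\}$. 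As a half-arc is a transversal of the antipodal pairing on $\mathbb{Z}_{n-1}$, the midpoint $w$ of any pair is joined to $n$ in exactly one of the two graphs, raising its degree in the difference to $n-1$, while each $G_c$ still has a spanning star at $c$; this yields an $\F_S$-good family of size $n$ and hence $M_{\F_S}(n)\ge n$.
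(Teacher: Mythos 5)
Your reduction to a family containing $\emptyset$ and the disjoint-centers argument correctly give $M_{\F_S}(n)\le n+1$ for every $n$; this is in substance the same key observation as the paper's, which phrases it more generally: assigning to \emph{each pair} $\{G_i,G_j\}$ a full-degree vertex of $G_i\oplus G_j$ yields a proper edge-coloring of $K_m$ with $n$ colors, because a vertex serving two pairs that share a member would be isolated in the third symmetric difference. The genuine gap is the even case of the upper bound. The parity bookkeeping you propose cannot be completed as stated: the only parity facts available are that each graph has an even number of odd-degree vertices and that, for each pair, the two degree-parities differ at the resolving vertex, and these constraints are simultaneously satisfiable with $n+1$ members when $n$ is even. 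For instance, for $n=4$ the degree-parity vectors $(0,0,0,0),(1,1,0,0),(0,1,1,0),(0,0,1,1),(1,0,0,1)$ have even weight, have a $1$ on the diagonal, and are pairwise distinct, so no contradiction can be extracted from parity alone. (Also, your claim that the resolving vertex must be a \emph{third} vertex is unjustified: it can be $u$ itself if $u$ is isolated in $G_v$.) What actually finishes the even case is the structure you proved but did not exploit: the same isolation argument shows that two pairs sharing a member cannot have the same resolving vertex, so the assignment is a proper edge-coloring of $K_{n+1}$; each color class is a matching of size at most $n/2$, while $K_{n+1}$ has $n(n+1)/2$ edges, so $n$ colors cannot suffice. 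This counting (equivalently, $\chi_e(K_{n+1})=n+1$ for $n$ even) is exactly how the paper rules out $m=n+1$.

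The second, larger gap is the lower bound. Your cyclic construction for odd $n$ reduces correctly to finding a reflection-symmetric $\frac{n-3}{2}$-regular graph $R_0$ on $\mathbb{Z}_n\setminus\{0\}$ with no edge $\{t,2t\}$ and no two neighbors of any $t$ symmetric about $t$, but you establish the existence of such an $R_0$ only for $n=5$ and $n=7$ and explicitly leave the general case open; since your even-$n$ construction bootstraps from the odd one, neither lower bound is proved. The paper avoids this obstacle entirely: it takes a proper edge-coloring of $K_{n+1}$ into perfect matchings $M_1,\dots,M_n$ (which exists unconditionally for all odd $n$, unlike the perfect $1$-factorizations needed elsewhere in the paper), and defines $G_k$ on $[n]$ by putting $ij\in E(G_k)$ exactly when $v_k$ lies in a chosen class of the bipartition determined by $M_i\cup M_j$; then the color $j$ of the edge $\{v_h,v_k\}$ is a full-degree vertex of $G_h\oplus G_k$. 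To repair your proof you should either prove existence of $R_0$ for all odd $n$ (a nontrivial starter-type design problem) or replace your construction by this $1$-factorization one, and in either case replace the parity argument by the matching count above.
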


\proofka
First we prove the upper bound. Let $G_1,\dots,G_m$ be an $\F_S$-good family on the vertex set $[n]$.
Consider the complete graph $K_m$ whose vertices are labeled with the graphs $G_1,\dots,G_m$.
For each edge $\{G_i,G_j\}$ of this graph assign an element $h\in [n]$ for which $h$ is adjacent to all other elements of $[n]$ in the graph $G_i\oplus G_j$. By the definition of $\F_S$-goodness such a $h$ exists for every pair of our graphs. Now observe that if an element $a\in [n]$ is assigned to two distinct edges $e$ and $f$ of our graph $K_m$, then $e$ and $f$ must be independent edges. Indeed, if that was not the case then we would have $e=\{G_i,G_j\}, f=\{G_i,G_k\}$ for some $i,j,k\in [n]$ and $a$ would be a full-degree vertex (that is one, connected to all other vertices) in both of the graphs $G_i\oplus G_j$ and $G_i\oplus G_k$. But since $G_j\oplus G_k=(G_i\oplus G_j)\oplus (G_i\oplus G_k)$, that would mean that $a$ is an isolated vertex in $G_j\oplus G_k$, so no vertex of this latter graph can have full degree contradicting the $\F_S$-goodness of our family. Thus our assignment of vertices from $[n]$ to the edges of our $K_m$ partitions the edge set of $K_m$ into sets of independent edges (every partition class consisting of the edges with the same assigned label), in other words, it defines a proper edge-coloring of $K_m$. This means that the number of possible labels, which is $n$, should be at least as large as the edge-chromatic number $\chi_e(K_m)$ of $K_m$. Since the latter is $m-1$ for even $m$ and $m$ for odd $m$, turning it around we obtain that for odd $n$ we must have $m\le n+1$ and for even $n$ we must have $m\le n$.

\medskip
\par\noindent
Now we show that the upper bound we proved is sharp. First assume that $n$ is odd and consider a complete graph $K_{n+1}$ on the vertices $v_1,\dots,v_{n+1}$ along with an optimal edge-coloring $c: E(K_{n+1})\to [n]$  of this graph. This edge-coloring partitions $E(K_{n+1})$ into $n$ disjoint matchings $M_1,\dots,M_n$, where $M_j$ consists of the edges colored $j$ for every $j\in [n]$. Now we construct the graphs $G_1,\dots,G_{n+1}$ by telling for each potential edge $ij$ of the complete graph on $[n]$ which $G_k$'s will contain it and which ones will not. Consider the edge $ij$ and the union of the matchings $M_i$ and $M_j$ (note that these matchings are in the ''other'' complete graph on $n+1$ vertices). This union is a bipartite graph on the vertex set $\{v_1,\dots,v_{n+1}\}$ with two equal size partite classes $A$ and $B$. Let $ij$ be an edge of the graph $G_k$ if and only if $v_k\in A$. (So $ij$ will be an edge of exactly half of our graphs $G_1,\dots,G_{n+1}$.) Do this similarly for all edges of $K_n$, the complete graph on vertex set $[n]$.
This way we defined our $n+1$ graphs. We have to show that they form an $\F_S$-good family.

To this end consider two of our graphs, say $G_h$ and $G_k$. The edge $\{v_h,v_k\}$ has got some color in our coloring $c$, call this color $j$. This means that $\{v_h,v_k\}$ belongs to the matching $M_j$. We claim this means that $j\in [n]$ is a full-degree vertex of $G_h\oplus G_k$. The latter is equivalent to the statement that every edge $ji$ incident to the point $j$ appears in exactly one of the graphs $G_h$ and $G_k$. But this follows from the way we constructed our graphs: when we decided about the edge $ji$ we considered the matchings $M_i$ and $M_j$ and the bipartite graph their union defines. Since $\{v_h,v_k\}\in M_j$, the points $v_h$ and $v_k$ are always in different partite classes of this bipartite graph, so whichever was called $A$, exactly one of $v_h$ and $v_k$ belonged to it. Thus the edge $ij$ was declared to be an edge of exactly one of $G_h$ and $G_k$. Since this is so for every $i\neq j$, $j$ is indeed a full-degree vertex in $G_h\oplus G_k$.

\medskip
\par\noindent
Assume now that $n$ is even. Then $n-1$ is odd and we can construct graphs $G_1,\dots,G_n$ on vertex set $[n-1]=\{1,\dots,n-1\}$ as given in the previous paragraph. These are not yet good, however, since we have an $n$th vertex that does not appear yet in any of the graphs. Note that we have $n-1$ matchings $M_1,\dots,M_{n-1}$ involved in the construction so far whose indices are just the first $n-1$ vertices of our graphs. Think about the additional vertex $n$ as the index of an additional ``matching'' $M_n$ that has no edges at all. We decide about the involvement of the edges $ni$ ($i<n$) in our graphs analogously as we did for the earlier edges: Consider the bipartite graph $M_i\cup M_n$, that consists of just the edges of $M_i$, so it is a perfect matching on the vertex set $\{v_1,\dots,v_n\}$. Let the two partite classes defined by this perfect matching be $A$ and $B$ and add the edge $ni$ to the graph $G_h$ if and only if $v_h$ belongs to $A$. Now we can prove analogously to the odd case that the symmetric difference of any two of our graphs contains a vertex of degree $n-1$. Consider $G_h$ and $G_k$. The edge between $v_h$ and $v_k$ in the auxiliary complete graph belongs to exactly one of the matchings $M_j$ and every edge $ij$ is in exactly one of the graphs $G_h$ and $G_k$ if $i\in\{1,\dots,j-1,j+1,\dots,n\}$. This completes the proof.
\qed

\medskip
\par\noindent
The only graph family code proven to be optimal and nonlinear (or not the coset of a linear code) in this paper is
the one appearing in the above Theorem~\ref{thm:fullstar}.
This is also the first case so far when the upper bound is proven without the use of Lemma~\ref{lem:ub}.
This suggests the question of what could be said about the dual problem in this case. The next theorem solves this dual problem for even values of $n$ also showing that Lemma~\ref{lem:ub} would not give a sharp upper bound for $M_{\F_S}(n)$.

\medskip
\par\noindent
\begin{thm} \label{thm:nostar}
If $n$ is even, then
$$D_{{\F}_S}(n)= 2^{{n\choose 2}-\frac{n}{2}}.$$
When $n$ is odd, then we have
$$2^{{n\choose 2}-\frac{n+1}{2}}\le D_{{\F}_S}(n)\le 2^{{n\choose 2}-\frac{n}{2}}.$$
\end{thm}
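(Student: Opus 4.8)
The plan is to prove the upper bound $D_{\F_S}(n)\le 2^{{n\choose 2}-n/2}$ for \emph{all} $n$ by an entropy argument based on Shearer's Lemma, and then to match it exactly for even $n$ (and to within a factor of $\sqrt 2$ for odd $n$) by an explicit family in which a small fixed set of edges is forbidden from every graph.

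For the upper bound, let $\C$ be an optimal dual family, so $|\C|=D_{\F_S}(n)$ and no symmetric difference of two of its members contains a spanning star. I would view each graph as its $\{0,1\}$-indicator on the $N={n\choose 2}$ coordinates indexed by the edges of $K_n$, let $X$ be a uniformly random member of $\C$ (so $H(X)=\log_2|\C|$), and for a vertex $v$ write $S_v$ for the set of $n-1$ edges at $v$ and $X_{S_v}$ for the restriction of $X$ to those coordinates. The crucial observation is that forbidding a full-degree vertex in every symmetric difference says exactly that no two members of $\C$ restrict to \emph{complementary} patterns on any $S_v$: if $x_{S_v}$ and $y_{S_v}$ were complementary, then $(x\oplus y)_{S_v}$ would be all-ones and $v$ would have degree $n-1$ in $x\oplus y$. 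Since complementation pairs up the $2^{n-1}$ patterns on $S_v$ into $2^{n-2}$ complementary pairs, and at most one pattern from each pair can occur, the support of $X_{S_v}$ has size at most $2^{n-2}$, so $H(X_{S_v})\le n-2$. The stars $\{S_v:v\in[n]\}$ cover every edge of $K_n$ exactly twice, so Shearer's Lemma (with cover multiplicity $t=2$) gives $2\,H(X)\le\sum_{v}H(X_{S_v})\le n(n-2)$, i.e. $H(X)\le \tfrac{n(n-2)}{2}={n\choose 2}-\tfrac n2$; exponentiating yields $D_{\F_S}(n)\le 2^{{n\choose 2}-n/2}$ for every $n$.

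For the lower bounds I would forbid a fixed small edge set that touches every vertex. When $n$ is even, fix a perfect matching $M$ of $K_n$ (with $n/2$ edges) and let $\A$ be all graphs on $[n]$ containing no edge of $M$, so $|\A|=2^{{n\choose 2}-n/2}$. For any $G,G'\in\A$ the symmetric difference $G\oplus G'$ again avoids every edge of $M$; since $M$ covers all $n$ vertices, each vertex is missing its matching edge in $G\oplus G'$ and hence has degree at most $n-2$, so $G\oplus G'$ has no spanning star. This matches the upper bound, giving $D_{\F_S}(n)=2^{{n\choose 2}-n/2}$ for even $n$. When $n$ is odd, I would take a near-perfect matching $M$ covering all vertices but one vertex $z$ (so $|M|=(n-1)/2$) together with one extra edge $e_0$ incident to $z$; then $M\cup\{e_0\}$ consists of $(n+1)/2$ edges and still covers every vertex. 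Letting $\A$ be all graphs avoiding $M\cup\{e_0\}$ produces a family of size $2^{{n\choose 2}-(n+1)/2}$ whose pairwise symmetric differences again cap every degree below $n-1$, yielding the claimed lower bound.

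The two constructions are routine: their validity is immediate from the fact that a single forbidden edge incident to $v$ keeps the degree of $v$ below $n-1$. I expect the substantive step to be the upper bound, and in particular the recognition that avoiding a spanning star in $x\oplus y$ is precisely the condition that $x$ and $y$ never restrict to complementary patterns on a star, which is what caps each single-star entropy at $n-2$. The point that makes the bound tight enough to match the even-$n$ construction is that the cover of the edge set by stars has multiplicity \emph{exactly} two, so $t=2$ is the largest admissible value in Shearer's inequality; the residual factor-$\sqrt2$ gap for odd $n$ reflects the parity obstruction that an edge set covering all $n$ vertices needs $(n+1)/2$ rather than $n/2$ edges.
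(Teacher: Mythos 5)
Your proof is correct and follows essentially the same route as the paper: the upper bound is Shearer's Lemma applied to the vertex stars $S_v$ (each edge covered exactly twice) together with the observation that no two members can project to complementary patterns on any star, which caps each projection at $2^{n-2}$; you phrase this in entropy form while the paper uses the equivalent set-projection form, taking square roots of $|\M|^2\le\prod_i|\M_i|\le 2^{n(n-2)}$. Your lower-bound construction (all graphs \emph{avoiding} a fixed edge set of size $\lceil n/2\rceil$ covering every vertex) is the same as the paper's (all graphs \emph{containing} such an edge set $T$) up to taking the symmetric difference of every member with $T$, which does not affect pairwise symmetric differences.
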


\medskip
\par\noindent
For the proof we will need the following celebrated
result from \cite{CFGS} (see also Corollary~15.7.7 in \cite{AS}).

\medskip
\par\noindent
{\bf Shearer's Lemma.} (\cite{CFGS})
{\it Let $S$ be a finite set and $A_1,\dots, A_m$ be subsets of $S$ such that every element of $S$ is contained in at least $k$ of the sets $A_1,\dots,A_m.$ Let $\M$ be a collection of subsets of $S$ and let $\M_i=\{T\cap A_i: T\in \M\}$ for $1\le i\le m$. Then
$$|\M|^k\le \prod_{i=1}^m |\M_i|.$$}

\medskip
\par\noindent
{\em Proof of Theorem~\ref{thm:nostar}.}
We will prove $$2^{{n\choose 2}-\lceil\frac{n}{2}\rceil}\le D_{{\F}_S}(n)\le 2^{{n\choose 2}-\frac{n}{2}}$$ that implies both the even and the odd case.
For the lower bound fix a subgraph $T$ of $K_n$ with the minimum number $\lceil\frac{n}{2}\rceil$ of edges such that no vertex is isolated and take all possible subgraphs of $K_n$ that contain all edges of $T$. The number of such subgraphs is $2^{{n\choose 2}-\lceil\frac{n}{2}\rceil}$ and no two of them has a symmetric difference that contains all edges incident to any fixed vertex. This proves the lower bound.
\smallskip
\par\noindent
For the upper bound consider a graph family $\M$ that satisfies the condition that no two of its elements have a symmetric difference with a vertex of degree $n-1$. For $i=1,\dots,n$ let $S_i$ be the set of $n-1$ edges (of $K_n$) incident to vertex $i$. Then for any $T, T'\in \M$ we cannot have $E(T')\cap S_i=S_i\setminus (E(T)\cap S_i),$
that is, $E(T)$ and $E(T')$ cannot be complementary on any $S_i$. So if $\M_i$ denotes the family of graphs obtained by taking the projection of all graphs from $\M$ to the edge set $S_i$, then $|\M_i|\le 2^{n-2}.$ Since each edge of $K_n$ appears in exactly two of the sets $S_i$, we can apply Shearer's Lemma to these sets with $k=2$. This gives $$|\M|^2\le \prod_{i=1}^n |\M_i|\le 2^{n(n-2)}.$$ Taking square roots we get the upper bound.
\qed

\medskip
\par\noindent
Note that if we restrict attention to linear graph families for the dual problem treated in Theorem~\ref{thm:nostar}, then using again that the cardinality of such a family should be a power of $2$ (cf. the similar argument in the proof of Theorem~\ref{thm:3connlin}) we get that our lower bound is also sharp for odd values of $n$.

\section{Local conditions}\label{local}

In the previous section we investigated $M_{\F}(n)$ in cases when the required symmetric differences contain specific spanning subgraphs, therefore to check whether these conditions are satisfied we have to consider our graphs on the whole vertex set. Now we turn to families $\F$
defined by containing some fixed small finite graphs, so the
nature of these conditions will be local.

\subsection{General local conditions}

\begin{defi}\label{defi:local}
A graph class $\LL$ defines a {\em local condition} if it has the property that whenever $H_1$ is an induced subgraph of $H_2$ and $H_1$ belongs to $\LL$ then so does also $H_2$. In short, we will refer to such an $\LL$ as a {\em local graph class}.
\end{defi}

\medskip
\par\noindent
Note that the above definition implies that whenever two graphs $F$ and $G$ are in the $\LL$-good relation (that is, $F\oplus G\in\LL$) then any $F'$ with $F'[U]\cong F$ and $G'$ with $G'[U]\cong G$ for some $U\subseteq V(F')=V(G')$ (that is, $F'$ and $G'$ induce subgraphs isomorphic to $F$ and $G$, respectively, on the same subset $U$ of their vertex set) are also in the $\LL$-good relation. This means that if two graphs are in this relation then there is always some local certificate for this.

\medskip
\par\noindent
Here are some examples of local graph classes.
\smallskip
\par\noindent
1. $\LL=\{H: L\subseteq H\}$ for some fixed finite simple graph $L$. That is $\LL$ contains all graphs that contain a (not necessarily induced) subgraph isomorphic to $L$. When $\LL$ is such a family we will use the simplified notation $M_L(n)$ for $M_{\LL}(n)$.
\smallskip
\par\noindent
2. $\LL=\{H: L\subseteq_{ind} H\}$ for some fixed finite simple graph $L$. That is $\LL$ contains all graphs that
have
an induced subgraph isomorphic to $L$. When $\LL$ is such a family we will use the simplified notation $M_{L,{\rm ind}}(n)$ for $M_{\LL}(n)$.
\smallskip
\par\noindent
(Note that although the above two examples give different notions, the word ``induced'' is indeed needed in Definition~\ref{defi:local}.)
\smallskip
\par\noindent
3. $\LL=\C_{\rm odd}:=\{H: C_{2k+1}\subseteq H\ {\rm for\ some\ integer}\ 1\le k\}$, that is, $\C_{\rm odd}$ contains all graphs that contain
an odd cycle.
\smallskip
\par\noindent
4. For some fixed integers $h$ and $\ell$ we can define $\LL_{h,\ell}=\{H: \exists U\subseteq V(H), |U|=h, |E(H[U])|=\ell\}$,
that is, $\LL_{h,\ell}$ is the class of all graphs that have an induced subgraph on $h$ vertices with exactly $\ell$ edges.

\medskip
\par\noindent
In the following we prove some general results related to $M_{\LL}(n)$ for local graph classes $\LL$ and will further investigate the special
case belonging to our first example above in the next subsection.
In Subsection~\ref{subsect:triodd} we will focus on $M_{K_3}(n)$
and $M_{\C_{\rm odd}}(n)$. In the final subsection we discuss the
behaviour of the functions
$M_{L,{\rm ind}}(n)$ mentioned in the second example above.

\medskip
\par\noindent
The next proposition gives a straightforward upper bound on the value of $M_{\LL}(n)$. It is in terms of $ex(n,\LL)$ that, as usually in extremal graph theory, denotes the maximum number of edges a graph on $n$ vertices can have without containing any $L\in \LL$ as a subgraph.

\medskip
\par\noindent
\begin{prop}\label{thm:Wilex}
For any local graph class $\LL$
$$M_{\LL}(n)\le 2^{{n\choose 2}-ex(n,\LL)}.$$
\end{prop}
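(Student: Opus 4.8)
The plan is to deduce the bound from the multiplicative inequality of Lemma~\ref{lem:ub} by exhibiting a large family for the \emph{dual} problem. Concretely, I would show that
$$D_{\LL}(n)\ge 2^{ex(n,\LL)},$$
since then Lemma~\ref{lem:ub} immediately gives
$$M_{\LL}(n)\le \frac{2^{n\choose 2}}{D_{\LL}(n)}\le 2^{{n\choose 2}-ex(n,\LL)}.$$
Thus the entire task reduces to producing a dual-feasible family of size $2^{ex(n,\LL)}$, i.e.\ a family no two members of which have their symmetric difference in $\LL$.

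To build such a family, first I would fix a graph $G_0$ on the vertex set $[n]$ that is extremal for $\LL$: it contains no member of $\LL$ as a subgraph and has exactly $ex(n,\LL)$ edges. Then I would let $\B$ consist of \emph{all} spanning subgraphs of $G_0$, that is, every graph $H$ on $[n]$ with $E(H)\subseteq E(G_0)$. There are exactly $2^{|E(G_0)|}=2^{ex(n,\LL)}$ such graphs, which is the cardinality we want.

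The one step that genuinely needs checking is that $\B$ is dual-feasible. For any $H,H'\in\B$ we have $E(H\oplus H')\subseteq E(H)\cup E(H')\subseteq E(G_0)$, so $H\oplus H'$ is itself a spanning subgraph of $G_0$; in particular $\B$ is closed under symmetric difference. Consequently, if $H\oplus H'$ were to lie in $\LL$, then $G_0$ would contain a member of $\LL$ as a subgraph, contradicting the choice of $G_0$ as $\LL$-free. Hence $H\oplus H'\notin\LL$ for every pair, establishing $D_{\LL}(n)\ge 2^{ex(n,\LL)}$ and completing the argument.

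I expect the main difficulty here to be conceptual rather than computational: the key realizations are that $D_{\LL}(n)$ is the convenient quantity to bound, and that ``all subgraphs of one fixed extremal graph'' forms a family closed under $\oplus$. It is worth remarking that this argument never uses the locality of $\LL$ from Definition~\ref{defi:local}; it relies only on the definition of $ex(n,\LL)$, so the stated inequality in fact holds for every class $\LL$ for which $ex(n,\LL)$ is defined.
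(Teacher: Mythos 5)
Your proof is correct and is essentially the paper's own argument: both take all (spanning) subgraphs of a fixed extremal $\LL$-free graph with $ex(n,\LL)$ edges as a dual-feasible family of size $2^{ex(n,\LL)}$ and then invoke Lemma~\ref{lem:ub}. Your closing observation that locality of $\LL$ is never used is also accurate (the paper's proof does not use it either); locality matters only for the later asymptotic results, not for this bound.
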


\proofka
Consider an $n$-vertex graph $H$ satisfying $|E(H)|=ex(n,\LL)$ and containing no subgraph isomorphic to any $L\in\LL$. The family of all subgraphs of $H$ clearly satisfies the requirements of the dual problem of $M_{\LL}(n)$ (no two graphs in that family can have a symmetric difference containing some $L\in \LL$) and the family has size $2^{ex(n,\LL)}$. Thus the claimed upper bound follows from Lemma~\ref{lem:ub}. \qed

\medskip
\par\noindent
Proposition~\ref{thm:Wilex} and our following results will justify the relevance of the following notion in our current setting.

\begin{defi}~\label{defi:rate}
The rate $R_{\LL}(n)$ of an optimal graph family code on $n$ vertices satisfying the requirement prescribed by the local graph class $\LL$ is defined as
$$R_{\LL}(n):=\frac{2}{n(n-1)}\log_2M_{\LL}(n).$$
\end{defi}

\medskip
\par\noindent
We will soon see that the value $\limsup_{n\to\infty}R_{\LL}(n)$ is strictly positive for any $\LL$ belonging to this section.
We will use the following theorem due to Wilson to show that the limit actually exists for all local graph classes.

\medskip
\par\noindent
{\bf Wilson's theorem.} (\cite{Wilson})
{\it For every finite simple graph $T$ there exists a threshold $n_0(T)$ such that if $n>n_0(T)$ and the following two conditions hold then the edge set of the complete graph $K_n$ can be partitioned into subgraphs each of which is isomorphic to $T$. The two conditions are:
\par\noindent
1. $n\choose 2$ is divisible by $|E(T)|$;
\par\noindent
2. $n-1$ is divisible by the greatest common divisor of the degrees of vertices in $T$.}

\medskip
\par\noindent
Note that the two conditions in the above theorem are obviously necessary. The decomposition of $K_n$ in the conclusion of the theorem is called a $T$-design when it exists, cf. \cite{ABB}.

\begin{thm}\label{thm:limes}
Let $\LL$ be an arbitrary fixed local graph class. Then the value
$\lim_{n\to\infty}R_{\LL}(n)$ exists and is bounded from below by $R_{\LL}(n)$ for every $n$.
\end{thm}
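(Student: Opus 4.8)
The plan is to prove the equivalent statement that $\lim_{n\to\infty}R_{\LL}(n)$ exists and equals $\sup_n R_{\LL}(n)$; the asserted lower bound by each individual $R_{\LL}(n)$ is then immediate. The engine of the argument is a supermultiplicativity relation obtained from Wilson's theorem applied with $T=K_m$. Fix $m$ and suppose $n$ is larger than the threshold $n_0(K_m)$ and satisfies the two divisibility conditions for $K_m$ (namely $\binom{m}{2}\mid\binom{n}{2}$ and $(m-1)\mid(n-1)$). Then $E(K_n)$ decomposes into $t=\binom{n}{2}/\binom{m}{2}$ edge-disjoint copies of $K_m$, sitting on $m$-subsets $U_1,\dots,U_t$ of $[n]$. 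Starting from an optimal $\LL$-good family $\{A_1,\dots,A_s\}$ on $[m]$ with $s=M_{\LL}(m)$, I would build, for every map $\sigma\colon[t]\to[s]$, the graph $G_\sigma$ on $[n]$ whose restriction to each $U_i$ is a fixed copy of $A_{\sigma(i)}$. Since the $U_i$ are edge-disjoint and exhaust $E(K_n)$, this is well defined and yields $s^t$ distinct graphs.

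The key point is that this family is $\LL$-good. For $\sigma\neq\sigma'$ choose $i$ with $\sigma(i)\neq\sigma'(i)$; because the copies are edge-disjoint and the $i$-th copy occupies \emph{all} edges inside $U_i$, the induced subgraph $(G_\sigma\oplus G_{\sigma'})[U_i]$ is exactly (an isomorphic copy of) $A_{\sigma(i)}\oplus A_{\sigma'(i)}$, which lies in $\LL$ by the goodness of the family on $[m]$. Since $\LL$ is a local class, i.e.\ closed under passing to a supergraph in which a member appears as an induced subgraph, it follows that $G_\sigma\oplus G_{\sigma'}\in\LL$. Hence $M_{\LL}(n)\ge M_{\LL}(m)^{\binom{n}{2}/\binom{m}{2}}$, and after taking $\log_2$ and dividing by $\binom{n}{2}$ this reads $R_{\LL}(n)\ge R_{\LL}(m)$.

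It remains to upgrade this relation, which is valid only for $n$ in certain residue classes, to all large $n$; this interpolation is the main obstacle. Here I would use that $M_{\LL}$ is nondecreasing: any $\LL$-good family on $[n']$ becomes one on $[n]\supseteq[n']$ by adjoining isolated vertices, the symmetric differences being unchanged apart from isolated vertices and hence still in $\LL$ by locality. The admissible values $n'$ for a $K_m$-decomposition form a nonempty union of arithmetic progressions, so they have gaps bounded by some constant $P=P(m)$. Given large $n$, I pick an admissible $n'\in[n-P,n]$; then
$$R_{\LL}(n)=\frac{\log_2 M_{\LL}(n)}{\binom{n}{2}}\ge\frac{\log_2 M_{\LL}(n')}{\binom{n}{2}}\ge\frac{\binom{n'}{2}}{\binom{n}{2}}\,R_{\LL}(m),$$
and $\binom{n'}{2}/\binom{n}{2}\to 1$ as $n\to\infty$ with $n-n'\le P$. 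Thus $\liminf_n R_{\LL}(n)\ge R_{\LL}(m)$ for every fixed $m$, whence $\liminf_n R_{\LL}(n)\ge\sup_m R_{\LL}(m)$. As trivially $R_{\LL}(n)\le\sup_m R_{\LL}(m)\le 1$, we conclude that $\limsup_n R_{\LL}(n)\le\sup_m R_{\LL}(m)\le\liminf_n R_{\LL}(n)$, so the limit exists, equals $\sup_n R_{\LL}(n)$, and in particular dominates each $R_{\LL}(n)$.
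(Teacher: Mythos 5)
Your proof is correct and follows essentially the same route as the paper's: a product construction over the blocks of a complete-graph design guaranteed by Wilson's theorem, with locality certifying $\LL$-goodness on the block where the two index sequences differ, followed by interpolation to all large $n$ via monotonicity of $M_{\LL}$ and the bounded gaps between admissible values. The only (cosmetic) difference is that you decompose the large $K_n$ into copies of the small $K_m$, whereas the paper phrases it as embedding the optimal family on $[n]$ into a design on $K_N$ with $N=kn(n-1)+1$; the arguments are identical.
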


\proofka
Let $n$ be an arbitrary natural number and let
${\cal G}=\{G_1,\dots,G_m\}$ be an optimal graph family code for $\LL$ with $V(G_i)=[n], i\in\{1,\dots,m\}$, that is one with $m=M_{\LL}(n)$. By Wilson's theorem a $K_n$-design exists for $K_N$, whenever $N$ is large enough and both $n-1$ divides $N-1$ and $n\choose 2$ divides $N\choose 2$. Take such an $N$ and consider the $K_n$-design on $K_N$ consisting of the subgraphs $K^{(1)},\dots,K^{(r)}$, where $r=\frac{N(N-1)}{n(n-1)}$ and each $K^{(i)}$ is isomorphic to $K_n$. Now let ${\cal G}_j:=\{G_1^{(j)},\dots,G_m^{(j)}\}$ be an optimal graph family code for $\LL$ on $V(K^{(j)})$ for every $j\in\{1,\dots,r\}$. (Obviously, we can choose each ${\cal G}_j$ to be isomorphic to ${\cal G}$.) Now define a graph family code on $K_N$ for $\LL$ as the collection of graphs that can be written in the form of $G_{\msbf a}:=\cup_{j=1}^r G_{a_j}^{(j)}$ where ${\mbf a}=(a_1,\dots,a_r)$ runs through all possible sequences satisfying $a_i\in\{1,\dots,m\}$ for every $i$. Since there are $m^r$ such sequences ${\mbf a}$, this way we have $m^r$ different graphs in our family.
They form indeed a graph family code for $\LL$ since for any two of them,
$G_{\msbf a}$ and $G_{\msbf b}$ there is some $j$ for which
$a_j\neq b_j$ and thus $G_{\msbf a}\oplus G_{\msbf b}
\supseteq_{ind} G_{a_j}\oplus G_{b_j}\supseteq_{ind} L$ for some $L\in\LL$.
This implies $M_{\LL}(N)\ge m^r$ and thus $$R_{\LL}(N)\ge \frac{2}{N(N-1)}\log_2m^r=\frac{2}{n(n-1)}\log_2 M_{\LL}(n)=R_{\LL}(n).$$
\smallskip
\par\noindent
The requirements for $N$ are satisfied if $N=kn(n-1)+1$ and $k$ is large enough. (Also for $N=kn(n-1)+n$ and large enough $k$ but considering the former is enough for our argument.) Since $M_{\LL}(n)$ is clearly monotone nondecreasing in $n$ (as we can always ignore some vertices and consider a graph family code only on the rest), we can write that for any $kn(n-1)+1\le i\le (k+1)n(n-1)$ we have $M_{\LL}(i)\ge m^r$ for $r=\frac{{{kn(n-1)+1}\choose 2}}{{n\choose 2}}$. Introducing the sequence $b_i:= m^r$ for $r=\frac{{{kn(n-1)+1}\choose 2}}{{n\choose 2}}$ whenever $kn(n-1)+1\le i\le (k+1)n(n-1)$ we can write $$\liminf_{i\to\infty}\frac{2}{i(i-1)}\log_2 M_{\LL}(i)\ge \liminf_{i\to\infty}\frac{2}{i(i-1)}\log_2 b_i\ge$$ $$\liminf_{k\to\infty}\frac{1}{{{{(k+1)n(n-1)}}\choose 2}}\log_2 m^{\frac{{{kn(n-1)+1}\choose 2}}{{n\choose 2}}}=\liminf_{k\to\infty}\frac{{{kn(n-1)+1}\choose 2}}{{{{(k+1)n(n-1)}}\choose 2}}\frac{2}{n(n-1)}\log_2m=R_{\LL}(n).$$
This proves that $\lim_{n\to\infty} R_{\LL}(n)$ exists and is equal to $\sup_n R_{\LL}(n)$.
\qed

\medskip
\par\noindent
\begin{remark}\label{rem:fekete}
{\rm The above proof is similar to proving that the limit defining the Shannon capacity of graphs exists which is usually done using Fekete's Lemma. Here, however, there are some technical subtleties (because of the divisibility requirements for $N$) that made it simpler to present a full proof than to refer simply to Fekete's Lemma. $\Diamond$}
\end{remark}

\medskip
\par\noindent
In view of Theorem~\ref{thm:limes} the following definition is meaningful.

\smallskip
\par\noindent
\begin{defi}~\label{defi:dc}
The distance capacity (or {\em distancity} for short) of a local graph class $\LL$ is defined as $$DC(\LL):=\lim_{n\to\infty} R_{\LL}(n).$$
\end{defi}

\medskip
\par\noindent
Based on Tur\'an's celebrated theorem \cite{Turan} (cf. also e.g. in \cite{Diestel}) and the famous theorem of Erd\H{o}s and Stone~\cite{ErdStone}, Erd\H{o}s and Simonovits \cite{ErdSim} proved that if $\LL$ is an arbitrary family of graphs, then
\begin{equation}\label{eq:EStS}
\lim_{n\to\infty}\frac{ex(n,\LL)}{{n\choose 2}}=1-\frac{1}{\chi_{\rm min}(\LL)-1},
\end{equation}
where $\chi_{\rm min}(\LL)=\min_{L\in\LL}\chi(L)$ and $\chi(G)$ denotes the chromatic number of graph $G$.
(We assume that $\chi_{\rm min}(\LL)\ge 2$. For the case when $\LL$ contains some edgeless graph see Remark~\ref{rem:empty}.)

\medskip
\par\noindent
Note that Proposition~\ref{thm:Wilex} and the above result determining the order of magnitude of $ex(n,\LL)$ has the following immediate consequence for the distancity.

\medskip
\par\noindent
\begin{cor}\label{cor:DCub}
For any local graph class $\LL$ with $\chi_{\rm min}(\LL)\ge 2$ we have
$$DC(\LL)\le \frac{1}{\chi_{\rm min}(\LL)-1}.$$
\qed
\end{cor}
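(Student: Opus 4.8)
The plan is to simply chain together three ingredients that are already in place: the upper bound of Proposition~\ref{thm:Wilex}, the definition of the rate, and the Erd\H{o}s--Stone--Simonovits asymptotics recorded in~\eqref{eq:EStS}. No new combinatorial input is needed; the corollary is a pure limiting computation.

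First I would take base-$2$ logarithms in the inequality of Proposition~\ref{thm:Wilex}, which yields $\log_2 M_{\LL}(n)\le {n\choose 2}-ex(n,\LL)$ for every $n$. Recalling from Definition~\ref{defi:rate} that $R_{\LL}(n)=\frac{2}{n(n-1)}\log_2 M_{\LL}(n)=\frac{1}{{n\choose 2}}\log_2 M_{\LL}(n)$, I would divide both sides by ${n\choose 2}$ to rewrite this bound in the form
$$R_{\LL}(n)\le 1-\frac{ex(n,\LL)}{{n\choose 2}},$$
valid for every $n$.

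Next I would pass to the limit $n\to\infty$. The left-hand side converges to $DC(\LL)$ by Definition~\ref{defi:dc}, the existence of this limit being exactly what Theorem~\ref{thm:limes} guarantees, while the fraction subtracted on the right converges by~\eqref{eq:EStS} to $1-\frac{1}{\chi_{\rm min}(\LL)-1}$ (here the hypothesis $\chi_{\rm min}(\LL)\ge 2$ is what makes this quantity well defined). Since both one-sided limits exist, the termwise inequality passes to the limit and gives
$$DC(\LL)\le 1-\left(1-\frac{1}{\chi_{\rm min}(\LL)-1}\right)=\frac{1}{\chi_{\rm min}(\LL)-1},$$
which is the asserted bound.

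There is no serious obstacle in this argument: every inequality holds for each fixed $n$, and the only point requiring a word of justification is the legitimacy of taking limits on both sides. This is precisely why Theorem~\ref{thm:limes} is invoked, since it upgrades the rate to an honest limit rather than a mere $\limsup$, so that the chain of inequalities survives the passage to infinity.
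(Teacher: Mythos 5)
Your proof is correct and is essentially the paper's own argument: the corollary is stated there as an immediate consequence of Proposition~\ref{thm:Wilex} combined with the Erd\H{o}s--Simonovits limit~(\ref{eq:EStS}), exactly the chain of taking logarithms, normalizing by ${n\choose 2}$, and passing to the limit (whose existence Theorem~\ref{thm:limes} guarantees) that you spell out.
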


\subsection{Containing a prescribed subgraph}\label{subsect:noninduced}

\medskip
\par\noindent
Now we focus on local graph classes mentioned in our first example after Definition~\ref{defi:local}: we have some fixed finite simple graph $L$ and
consider $\LL=\{H: L\subseteq H\}$. As said above in this case we will use the notation $M_L(n)$ for $M_{\LL}(n)$ and similarly, we will also denote $R_{\LL}(n)$ and $DC(\LL)$ by $R_L(n)$ and $DC(L)$, respectively. We prove that in this case the upper bound of Corollary~\ref{cor:DCub} is always sharp.

\medskip
\par\noindent
\begin{thm}\label{thm:DCL}
For any fixed graph $L$ we have $$DC(L)=\frac{1}{\chi(L)-1}.$$
\end{thm}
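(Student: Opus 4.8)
The plan is to prove the two inequalities separately, the upper bound being almost immediate and the lower bound carrying all the content. For the upper bound, note first that for the class $\LL=\{H:L\subseteq H\}$ we have $\chi_{\rm min}(\LL)=\chi(L)$: every graph containing a copy of $L$ has chromatic number at least $\chi(L)$, and $L$ itself attains this value. Hence Corollary~\ref{cor:DCub} applies directly and gives $DC(L)\le \frac{1}{\chi(L)-1}$. It remains to prove the matching lower bound $DC(L)\ge\frac{1}{\chi(L)-1}$.

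For the lower bound I would restrict attention to linear graph families, i.e. to linear subspaces $\A$ of the space $Z_2^{n\choose 2}$ of all graphs on $[n]$ (in the spirit of Remark~\ref{rem:linear} and Theorem~\ref{thm:3connlin}). For such an $\A$ the symmetric difference of two distinct members is itself a nonzero member of $\A$, so $\A$ is $\LL$-good if and only if every nonzero vector of $\A$, viewed as a graph, contains a copy of $L$. The task thus reduces to exhibiting a subspace of dimension roughly $\frac{1}{\chi(L)-1}{n\choose 2}$ none of whose nonzero vectors is $L$-free.

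The key external ingredient is the classical enumeration theorem of Erd\H{o}s, Frankl and R\"odl, stating that the number of $L$-free graphs on $[n]$ is $2^{(1+o(1))\,ex(n,L)}$; together with (\ref{eq:EStS}) this count equals $2^{(1-\frac{1}{\chi(L)-1}+o(1)){n\choose 2}}$. Fix $\varepsilon>0$ and let $\A$ be a uniformly random subspace of $Z_2^{n\choose 2}$ of dimension $d=\lfloor(\frac{1}{\chi(L)-1}-\varepsilon){n\choose 2}\rfloor$. Each fixed nonzero graph belongs to $\A$ with probability $\frac{2^d-1}{2^{n\choose 2}-1}$, so a union bound over all $L$-free graphs bounds the expected number of nonzero $L$-free vectors of $\A$ by $2^{(1-\frac{1}{\chi(L)-1}+o(1)){n\choose 2}}\cdot 2^{d-{n\choose 2}}=2^{(-\varepsilon+o(1)){n\choose 2}}$, which is below $1$ once $n$ is large. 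Consequently some $d$-dimensional subspace has all its nonzero vectors containing $L$, whence $M_L(n)\ge 2^d$ and $R_L(n)\ge \frac{1}{\chi(L)-1}-\varepsilon-o(1)$. Letting $n\to\infty$ and then $\varepsilon\to 0$ gives $DC(L)\ge\frac{1}{\chi(L)-1}$, which combined with the upper bound proves the theorem. The same computation also covers the bipartite case $\chi(L)=2$, where $ex(n,L)=o(n^2)$ forces $DC(L)=1$.

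I expect the only genuine obstacle to be the appeal to the Erd\H{o}s--Frankl--R\"odl count rather than the construction itself: the union bound really needs that $L$-free graphs are no more numerous than about $2^{ex(n,L)}$, and knowing the value of $ex(n,L)$ alone does not suffice. This supersaturation/enumeration statement is exactly where the regularity lemma (flagged among the keywords) enters. By contrast the probabilistic choice of $\A$ is routine; the only minor points requiring care are the uniform inclusion probability $\frac{2^d-1}{2^{n\choose 2}-1}$ for a random subspace and sending the error terms $\varepsilon$ and $o(1)$ to zero in the correct order.
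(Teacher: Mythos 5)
Your proof is correct, and it reaches the theorem by a genuinely different lower-bound mechanism than the paper, while sharing the same two pillars: the upper bound via Corollary~\ref{cor:DCub} (with the observation $\chi_{\rm min}(\LL)=\chi(L)$) and the Erd\H{o}s--Frankl--R\"odl count $F_n(L)=2^{{n\choose 2}\left(1-\frac{1}{\chi(L)-1}+o(1)\right)}$ of $L$-free graphs as the essential external input. The paper converts that count into a large family deterministically: it forms the auxiliary graph $G_L$ on all $2^{{n \choose 2}}$ graphs on $[n]$, joining two graphs when their symmetric difference is $L$-free, notes that $G_L$ is vertex-transitive and hence regular of degree $F_n(L)$, and applies the trivial bound $\alpha(G_L)\ge |V(G_L)|/(\Delta(G_L)+1)$ to get $M_L(n)\ge 2^{{n \choose 2}}/(F_n(L)+1)$. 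You instead draw a uniformly random linear subspace of dimension $d=\lfloor(\frac{1}{\chi(L)-1}-\varepsilon){n\choose 2}\rfloor$ and kill all nonzero $L$-free vectors by a first-moment argument; your inclusion probability $\frac{2^d-1}{2^{{n\choose 2}}-1}$ is correct (the uniform measure on $d$-dimensional subspaces is invariant under the transitive action of the general linear group on nonzero vectors), your reduction ``a linear family is $\LL$-good if and only if every nonzero member contains $L$'' is valid, and the exponent arithmetic checks out, so both routes give the same bound up to a constant factor. What your route buys is structure: the optimal rate is attained by a \emph{linear} graph family, a strictly stronger conclusion that speaks directly to Problem~1 in the paper's final section, whereas the paper's greedy independent set carries no such structure. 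What the paper's route buys is economy and reusability: it is deterministic, needs no counting of subspaces, and the same template (independence number at least $|V|/(\Delta+1)$ in an auxiliary graph) is redeployed for the induced case in Theorem~\ref{thm:DCLindb}, where one must restrict to balanced $k$-partite graphs and invoke Lemma~\ref{lemma:kpartiteub}, precisely because the unrestricted count of induced-$L$-free graphs is governed by the partition number $r(L)$ rather than $\chi(L)$. One small inaccuracy in your closing paragraph: for bipartite $L$ the value of $ex(n,L)$ alone \emph{does} suffice, since every $L$-free graph has at most $ex(n,L)=o(n^2)$ edges and therefore $F_n(L)\le\sum_{i\le ex(n,L)}\binom{{n\choose 2}}{i}=2^{o(n^2)}$ by the entropy bound, exactly as the paper notes; the regularity-based enumeration is genuinely needed only when $\chi(L)\ge 3$, where this binomial-sum bound is vacuous.
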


\medskip
\par\noindent
For the proof we will use a result by Erd\H{o}s, Frankl and R\"odl \cite{EFR} about the number $F_n(L)$ of graphs on $n$ labeled vertices containing no subgraph isomorphic to $L$.

\medskip
\par\noindent
{\bf Erd\H{o}s-Frankl-R\"odl theorem.} (\cite{EFR})
{\it Suppose $\chi(L)=r\ge 3$. Then
$$F_n(L)=2^{ex(n,K_r)(1+o(1))}.$$}

\par\noindent
Note that this gives
$$F_n(L)=2^{{n\choose 2}\left(1-\frac{1}{\chi(L)-1}+o(1)\right)}$$
by (\ref{eq:EStS}) (in fact, already directly by Tur\'an's theorem).
\medskip
\par\noindent
While the proof of the Erd\H{o}s-Frankl-R\"odl theorem is based on Szemer\'edi's Regularity Lemma, a similar result for bipartite $L$
easily follows from (\ref{eq:EStS}) (or from the K\H{o}v\'ari-S\'os-Tur\'an
Theorem \cite{KST}).
Indeed, it implies that if $L$ is bipartite then $F_n(L)<{{n\choose 2}\choose {\varepsilon {n\choose 2}}}$ for any $\varepsilon>0$
provided $n>n_0(\eps)$,
and that implies the claimed statement. (To see the latter one can use the well-known fact, cf. e.g. Lemma 2.3 in \cite{CsK}, that $${t\choose {\alpha t}}=2^{t(h(\alpha)+o(1))},$$ where $h(x)=-x\log_2x-(1-x)\log_2(1-x)$ is the binary entropy function and $0\le \alpha\le 1$ is meant to be such that $\alpha t$ is an integer. Applying this for $t:={n\choose 2}$ and $\alpha=\varepsilon$ we obtain that for any $0<\varepsilon<1$ the number ${{n\choose 2}\choose {\varepsilon {n\choose 2}}}$ is more than $2^{\delta{n\choose 2}}$ for some positive $\delta$.)

\medskip
\par\noindent
{\it Proof of Theorem~\ref{thm:DCL}.}
It follows immediately from Corollary~\ref{cor:DCub} that the right hand side is an upper bound on the left hand side so we only have to prove the reverse inequality.
\smallskip
\par\noindent
To this end let $G_L$ denote the graph whose vertices are all possible graphs on $n$ labeled vertices and two are connected if and only if their symmetric difference does not contain $L$ as a subgraph. (Note that this is just the complementary graph of $H_{\F}$ used in the proof of Lemma~\ref{lem:ub} when $\F$ is set to be the local graph class $\LL$ belonging to our problem.) Then $M_L(n)$ is equal to the independence number $\alpha(G_L)$ of $G_L$.
Clearly, $G_L$ is vertex-transitive (cf. the argument in the proof of Lemma~\ref{lem:ub} for $H_{\F}$), in particular, it is regular. Since the degree of its vertex representing the edgeless graph is just $F_n(L)$, we get (denoting the maximum degree of a graph $G$ by $\Delta(G)$) that
$$M_L(n)=\alpha(G_L)\ge\frac{|V(G_L)|}{\Delta(G_L)+1}=\frac{|V(G_L)|}{F_n(L)+1}=\frac{2^{{n\choose 2}}}{2^{{n\choose 2}\left(1-\frac{1}{\chi(L)-1}+o(1)\right)}}=2^{{n\choose 2}\left(\frac{1}{\chi(L)-1}+o(1)\right)}$$
by the Erd\H{o}s-Frankl-R\"odl theorem (and by the above discussion also for bipartite graphs). Putting this inequality into the definition of $DC(L)$ the required result follows.
\qed

\medskip
\par\noindent
\begin{cor}\label{DCgen}
Let $\G$ be a set of graphs, each containing at least one edge,
and let $\LL_{\G}$ be the local graph class containing all graphs that contain at least one $G\in\G$ as a subgraph. Then $$DC(\LL_{\G})=\frac{1}{\chi_{\rm min}(\LL_{\G})-1}=\frac{1}{\chi_{\rm min}(\G)-1}.$$ In particular, $$DC(\C_{\rm odd})=DC(K_3)=\frac{1}{2}.$$
\end{cor}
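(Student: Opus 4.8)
The plan is to reduce the whole statement to the two results already in hand: Theorem~\ref{thm:DCL}, which settles the single-graph case, and Corollary~\ref{cor:DCub}, which gives the general upper bound. First I would check that $\LL_\G$ really is a local graph class in the sense of Definition~\ref{defi:local}, so that $DC(\LL_\G)$ is well defined through Theorem~\ref{thm:limes}: if $H_1$ is an induced subgraph of $H_2$ and $H_1$ contains some $G\in\G$ as a (not necessarily induced) subgraph, then $G\subseteq H_1\subseteq H_2$, whence $H_2\in\LL_\G$ too. Next I would record the identity $\chi_{\rm min}(\LL_\G)=\chi_{\rm min}(\G)$: each member of $\G$ itself lies in $\LL_\G$, and conversely every $H\in\LL_\G$ contains some $G\in\G$, so $\chi(H)\ge\chi(G)\ge\chi_{\rm min}(\G)$; minimizing over $H$ gives equality. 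Since every $G\in\G$ has at least one edge, $\chi_{\rm min}(\G)\ge 2$, so Corollary~\ref{cor:DCub} applies and delivers the upper bound $DC(\LL_\G)\le \frac{1}{\chi_{\rm min}(\G)-1}$.

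For the matching lower bound I would exploit that requiring the symmetric differences to contain \emph{some} member of $\G$ is a weaker demand than requiring them to contain one fixed member. Concretely, choose $L^*\in\G$ with $\chi(L^*)=\chi_{\rm min}(\G)$ and observe the inclusion $\LL_{L^*}=\{H:L^*\subseteq H\}\subseteq\LL_\G$. Hence any $\LL_{L^*}$-good family is automatically $\LL_\G$-good, because each of its pairwise symmetric differences contains $L^*$ and therefore contains a member of $\G$. This yields $M_{\LL_\G}(n)\ge M_{L^*}(n)$ for every $n$, so $R_{\LL_\G}(n)\ge R_{L^*}(n)$, and passing to the limit while invoking Theorem~\ref{thm:DCL} gives $DC(\LL_\G)\ge DC(L^*)=\frac{1}{\chi(L^*)-1}=\frac{1}{\chi_{\rm min}(\G)-1}$. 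Combined with the upper bound, this proves the main equality.

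Finally, the two ``in particular'' claims reduce to computing chromatic numbers. The class $\C_{\rm odd}$ is precisely $\LL_\G$ for $\G=\{C_{2k+1}:k\ge 1\}$, and since every odd cycle is $3$-chromatic we get $\chi_{\rm min}(\G)=3$, so the formula yields $DC(\C_{\rm odd})=\frac12$; similarly $DC(K_3)=\frac{1}{\chi(K_3)-1}=\frac12$, which is already Theorem~\ref{thm:DCL} applied to $L=K_3$.

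I do not anticipate any serious difficulty: the argument is essentially a monotonicity comparison between the disjunctive condition and a single worst-case subgraph, glued to the two cited theorems. The one step that deserves care is getting the direction of the inclusion $\LL_{L^*}\subseteq\LL_\G$ right, so that a code for the \emph{stronger} one-graph condition transfers verbatim to the weaker disjunctive condition, and using the hypothesis that every $G\in\G$ has an edge precisely to guarantee $\chi_{\rm min}(\G)\ge 2$, which is what makes Corollary~\ref{cor:DCub} legitimately applicable.
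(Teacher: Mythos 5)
Your proposal is correct and follows essentially the same route as the paper's proof: the upper bound comes from Corollary~\ref{cor:DCub}, and the lower bound comes from picking a minimum-chromatic-number member $L^*\in\G$ and invoking Theorem~\ref{thm:DCL} via the monotonicity $DC(\LL_\G)\ge DC(L^*)$ (which the paper states as a ``trivial fact'' and you justify explicitly through the inclusion $\LL_{L^*}\subseteq\LL_\G$). Your additional verifications---that $\LL_\G$ is a local graph class and that $\chi_{\rm min}(\LL_\G)=\chi_{\rm min}(\G)$---are details the paper leaves implicit, not a different argument.
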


\proofka
The second statement is clearly a special case of the first one, so it is enough to prove the latter. It is a straightforward consequence of Corollary~\ref{cor:DCub} that the left hand side is bounded from above by the right hand side. For the reverse inequality note the trivial fact that $DC(\LL_{\G})\ge DC(G)$ for any $G\in\G$. Applying this for some $G\in\G$ that
satisfies $\chi(G)=\min_{G\in\G}\chi(G)=\chi_{\rm min}(\LL_{\G})$ the statement follows from Theorem~\ref{thm:DCL}.
\qed

\medskip
\par\noindent
\begin{remark}\label{rem:asydual}
{\rm It is straightforward from the foregoing that the above results also determine for any graph family $\G$ the asymptotic behaviour of the value $D_{\LL_{\G}}(n)$ belonging to the dual problem. Indeed, by Lemma~\ref{lem:ub} and Corollary~\ref{DCgen} we have that
$\lim_{n\to\infty}\frac{1}{{n\choose 2}}\log D_{\LL_{\G}}(n)\le 1-DC(\LL_{\G})=1-\frac{1}{\chi_{\min}(\G)-1}$ while a matching lower bound follows from the argument in the proof of Proposition~\ref{thm:Wilex}. Thus we have
$$\lim_{n\to\infty}\frac{2}{n(n-1)}\log D_{\LL_{\G}}(n)=1-\frac{1}{\chi_{\min}(\G)-1}$$
for any graph family $\G$. This means that by taking all subgraphs of a graph with the largest possible number of edges without containing a subgraph from $\G$ we obtain asymptotically a largest family of graphs no two of which have any $G\in\G$ in their symmetric difference.
$\Diamond$}
\end{remark}

\subsection{Containing a triangle or an odd cycle} \label{subsect:triodd}

In this subsection we are investigating $M_L(n)$ for small values of $n$ and the simplest $3$-chromatic graph, which is the triangle $K_3$. We will also look at the analogous problem when $K_3$, the cycle of length 3 is replaced by the family of all odd cycles.

\medskip
\par\noindent
For $L=K_3$ the bound of Proposition~\ref{thm:Wilex} gives us $M_{K_3}(n)\le 2^{{{n\choose 2}-\lceil\frac{n}{2}\rceil\lfloor\frac{n}{2}\rfloor}}$. Below we show that this upper bound is tight whenever $n$ is at most $6$.

\medskip
\par\noindent
The first part of the following Proposition is very simple and we present it only for the sake of completeness.

\medskip
\par\noindent
\begin{prop}~\label{prop:triv34}
We have $M_{K_3}(3)=2$ and $M_{K_3}(4)=4$.
\end{prop}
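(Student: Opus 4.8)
The plan is to obtain both equalities by matching the general upper bound of Proposition~\ref{thm:Wilex} with explicit constructions. First I would record that Mantel's theorem (the $K_3$ case of Tur\'an's theorem) gives $ex(n,K_3)=\lceil\frac{n}{2}\rceil\lfloor\frac{n}{2}\rfloor$, so Proposition~\ref{thm:Wilex} yields $M_{K_3}(3)\le 2^{{3\choose 2}-2}=2$ and $M_{K_3}(4)\le 2^{{4\choose 2}-4}=4$. It then suffices to construct graph families whose pairwise symmetric differences all contain a triangle, of sizes $2$ and $4$ on $3$ and $4$ vertices respectively.

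For $n=3$ the construction is immediate: the family consisting of the empty graph and the triangle on $[3]$ has symmetric difference $K_3$, which contains a triangle, giving $M_{K_3}(3)\ge 2$.

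For $n=4$ I would build a \emph{linear} family, that is, one containing the empty graph and closed under $\oplus$; for such a family every pairwise symmetric difference is itself a member, so it is enough to guarantee that each of its three nonempty members contains a triangle. A first attempt using only triangles as generators fails, and this is the main subtlety: the symmetric difference of two triangles sharing an edge, say the triangle on $\{a,b,c\}$ and the triangle on $\{a,b,d\}$, is the $4$-cycle $c\,a\,d\,b$, which is triangle-free. The fix is to break this $4$-cycle by adding a single diagonal to one generator. Concretely I would take the two generators $a=\{12,13,23,34\}$ (the triangle on $\{1,2,3\}$ together with the edge $34$) and $b=\{12,14,24\}$ (the triangle on $\{1,2,4\}$); then $a\oplus b=\{13,23,34,14,24\}$ contains the triangle on $\{1,3,4\}$. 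The four graphs $\emptyset,a,b,a\oplus b$ thus form a family whose pairwise symmetric differences (being exactly the three nonempty members) each contain a triangle, proving $M_{K_3}(4)\ge 4$ and hence equality.

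The only genuinely non-routine point is locating the $n=4$ construction; the observation above about a pair of edge-sharing triangles isolates precisely why the naive choice of generators fails and how the single extra edge repairs it, while everything else reduces to the direct verification already indicated.
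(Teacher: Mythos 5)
Your proposal is correct and follows essentially the same route as the paper: both match the upper bound from Proposition~\ref{thm:Wilex} (with $ex(n,K_3)=\lceil\frac{n}{2}\rceil\lfloor\frac{n}{2}\rfloor$) against the trivial two-graph family for $n=3$ and a four-graph family for $n=4$ that is closed under $\oplus$ --- indeed the paper's own $n=4$ family $\{\emptyset,G_1,G_2,G_3\}$ is exactly such a linear space (as its Remark on linear spaces points out), differing from yours only in the choice of generators. Your explicit use of linearity to reduce the verification to the three nonempty members, and your observation about why two edge-sharing triangles alone fail, are nice expository touches but not a different argument.
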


\proofka
For $n=3$ the statement is trivial: take the empty graph and a triangle on three vertices, this $2$-element family already achieves the value of the upper bound which is $2$ for $n=3$.

\medskip
\par\noindent
For $n=4$ we give the following four graphs on the vertex set $\{1,2,3,4\}$ by their edge sets.
Let $$E(G_0)=\emptyset, E(G_1)=\{12,23,13,34\}, E(G_2)=\{23,34,24,14\}, E(G_3)=\{12,13,24,14\}.$$
It takes an easy checking that the symmetric difference of any two of these graphs contains a triangle. Since the upper bound in Proposition~\ref{thm:Wilex} is also $4$ in this case, this proves that $M_{K_3}(4)=4$.
\qed

\medskip
\par\noindent
\begin{remark}\label{rem:lspace} {\rm Note that both of the above simple constructions are closed under the symmetric difference operation, that is they form a linear space over $GF(2)$ when the graphs are represented by the characteristic vectors of their edge sets. In fact, the second construction could also be presented as the vector space generated in this sense by any two of the graphs $G_1, G_2, G_3$. $\Diamond$}
\end{remark}

\medskip
\par\noindent
\begin{prop}\label{lem:MK3_5}
$$M_{K_3}(5)=16.$$
\end{prop}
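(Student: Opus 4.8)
The plan is to prove $M_{K_3}(5)=16$ by establishing matching upper and lower bounds. The upper bound is immediate from Proposition~\ref{thm:Wilex}: with $n=5$ we have $ex(5,K_3)=\lceil\frac{5}{2}\rceil\lfloor\frac{5}{2}\rfloor=6$ (the maximum triangle-free graph being the complete bipartite graph $K_{2,3}$, by Tur\'an's theorem), so $M_{K_3}(5)\le 2^{\binom{5}{2}-6}=2^{10-6}=2^4=16$. Thus all the work lies in constructing a $K_3$-good family of size exactly $16$ on the vertex set $[5]$, that is, a family of $16$ graphs any two of whose symmetric differences contain a triangle.

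Given the pattern of the earlier small cases (Remark~\ref{rem:lspace}), the natural approach is to look for a \emph{linear} construction: a $4$-dimensional subspace $\A$ of the $\binom{5}{2}=10$-dimensional space $GF(2)^{E(K_5)}$ such that every nonzero element of $\A$, viewed as a graph, contains a triangle. Since the symmetric difference of two members of a linear family is again a member, it suffices to guarantee that every nonzero graph in $\A$ contains a triangle; there are $2^4-1=15$ such nonzero elements to check. First I would search for four generating graphs $B_1,B_2,B_3,B_4$ whose span has this property. A convenient way to organize the search is to note that the orthogonal complement has dimension $6$ and corresponds to the cycle space being avoided; equivalently I want a $4$-dimensional code all of whose nonzero words, as edge sets, are non-bipartite in the weak sense of containing at least one triangle. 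I expect an explicit list of four such graphs, each built to be triangle-rich, will work, and I would verify the $15$ nonzero combinations directly.

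The main obstacle will be exhibiting generators that actually work, since the constraint is fairly tight: of the $2^{10}=1024$ graphs on $[5]$, only $64$ are triangle-free (namely the $2^{6}$ subgraphs of a fixed $K_{2,3}$ give one such family, and $F_5(K_3)$ counts all of them), so a random $4$-dimensional subspace is quite unlikely to avoid them, and I must be sure none of my $15$ nonzero codewords lands among the triangle-free graphs. I would handle this by choosing generators supported so that their pairwise and triple sums retain a triangle, for instance by taking graphs that each contain several edge-disjoint or overlapping triangles so that cancellations under symmetric difference cannot destroy all of them. The verification step is the crux: once a candidate set of four generators is fixed, checking that each of the $15$ nonzero sums contains a triangle is a finite, routine computation, and I would present the resulting list of generators together with the assertion that this check succeeds, yielding $|\A|=16$ and hence $M_{K_3}(5)\ge 16$.

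Combining the two bounds gives $M_{K_3}(5)=16$. I would remark, in parallel with Remark~\ref{rem:lspace}, that the optimal family here is again linear, reinforcing the theme that for these local containment conditions the extremal codes can be taken to be vector spaces over $GF(2)$.
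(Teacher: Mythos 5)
Your upper bound is correct and matches the paper's: Proposition~\ref{thm:Wilex} with $ex(5,K_3)=6$ gives $M_{K_3}(5)\le 2^{10-6}=16$. Your overall strategy for the lower bound --- find a $4$-dimensional subspace of $GF(2)^{E(K_5)}$ all of whose nonzero elements contain a triangle, so that closure under symmetric difference reduces the pairwise condition to a check on $15$ codewords --- is also exactly the paper's strategy. But there is a genuine gap: you never exhibit the generators. The entire content of the lower bound \emph{is} the explicit construction, and your proposal replaces it with a description of a search you would perform and an assertion that it would succeed. As written, nothing in your text establishes that such a subspace exists, so $M_{K_3}(5)\ge 16$ is not proven. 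For comparison, the paper takes $G_1$ with edge set $\{12,23,13,35\}$ (a triangle plus a pendant edge, drawn on a regular pentagon) and its four rotations $G_2,\dots,G_5$; since these five graphs cover every edge of $K_5$ exactly twice, their sum is zero, the span has exactly $2^5/2=16$ elements, and by the rotational symmetry the triangle-check for all nonzero codewords reduces to just three cases: $|I|=1$, $I=\{1,2\}$, and $I=\{1,3\}$. That symmetry-based reduction is also a practical answer to the verification burden you worry about, and it is the kind of detail a complete proof needs to supply rather than promise.

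A smaller, incidental error: you claim that only $64$ of the $1024$ graphs on $[5]$ are triangle-free, ``namely the $2^6$ subgraphs of a fixed $K_{2,3}$.'' The subgraphs of one fixed $K_{2,3}$ number $64$, but they are far from all triangle-free graphs (e.g.\ every graph with at most two edges, every $C_5$, and the subgraphs of the other nine copies of $K_{2,3}$ are also triangle-free; the true count is $388$). This does not affect your argument's logic, since you use the count only as a heuristic about random subspaces, but the statement as phrased is false and should be removed or corrected.
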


\proofka
The value of the upper bound in Proposition~\ref{thm:Wilex}
gives $16$ for $n=5$, so we only have to prove that $16$ is also a lower bound.
To this end we will give a set of graphs forming a vector space in the sense of Remark~\ref{rem:lspace}. We will give this vector space by a set of generators, although in a somewhat redundant way. (Our reason to keep this redundancy is that the construction has more symmetry this way.)

\smallskip
\par\noindent
Think about the vertices $\{1,2,3,4,5\}$ as if they were given on a circle at the vertices of a regular pentagon in their natural order. Consider the graph with edge set $$E(G_1):=\{12,23,13,35\}.$$ Let $G_2, G_3, G_4, G_5$ be the four graphs we obtain from $G_1$ by rotating it along the circle containing the vertices so that vertex $1$ moves to $2$, $2$ to $3$, etc. Thus we have $$E(G_2)=\{23,34,24,41\}, E(G_3)=\{34,45,35,52),$$ $$E(G_4)=\{45,51,41,13\}, E(G_5)=\{51,12,52,24\}.$$
Now we consider the linear space the characteristic vectors of the edge sets of these five graphs $G_i, i\in\{1,2,3,4,5\}$ generate. These graphs can be defined as the elements of the family ${\cal G}=\{G_I: I\subseteq [5]\},$ where $$G_I=\oplus_{i\in I} G_i,$$ meaning that $V(G_I)=[5]$ and $E(G_I)$ contains exactly those edges that appear in an odd number of the graphs $G_i$ with $i\in I$.
\medskip
\par\noindent
Note that every edge of the underlying $K_5$ on $[5]$ appears in exactly two of the graphs $G_1,\dots,G_5$, therefore for $I=[5]$ we have that $G_I$ is the empty graph just as $G_{\emptyset}$ is. This implies that for every $I\subseteq [5]$ and $\overline{I}:=[5]\setminus I$ we have $G_I=G_{\overline{I}}$, thus every graph in our graph family has exactly two representations as $G_I$ for some $I\subseteq [5]$. (The two representations are given by $I$ and $\overline{I}$ as we have seen. It also follows that if $J\neq I,\overline{I}$ then $G_J\neq G_I$,
otherwise we would have $G_{J\oplus I}$ be the empty graph for
$J\oplus I\notin\{\emptyset, [5]\}$ contradicting that every edge appears exactly twice in the sets $E(G_i),\ i=1,\dots,5$.) Thus we have indeed $\frac{1}{2}2^5=16$ graphs in our family matching our upper bound for $n=4$.
\medskip
\par\noindent
We have to show that the symmetric difference of any two of our graphs contains a triangle. Since our construction is closed for the symmetric difference operation this is equivalent to say that all graphs in our family except the empty graph contains a triangle. Since $G_I=G_{\overline{I}}$ it is enough to prove that $G_I$ contains a triangle for all $1\le |I|\le 2, I\subseteq [5]$. This is easy to see when $|I|=1$. For subsets with $|I|=2$ it is enough to check this for $I=\{1,2\}$ and $I=\{1,3\}$ by the rotational symmetry of our construction. But these two cases are easy to check: $G_{\{1,2\}}$ contains the triangles on the triples of vertices $1,2,4$ and $1,3,4$, while $G_{\{1,3\}}$ contains the triangle on vertices $1,2,3$.
\qed

\medskip
\par\noindent
\begin{prop}\label{lem:MK3_6}
$$M_{K_3}(6)=64.$$
\end{prop}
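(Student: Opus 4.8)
The plan is to match the upper bound from Proposition~\ref{thm:Wilex}. For $n=6$ this bound reads $M_{K_3}(6)\le 2^{\binom{6}{2}-\lceil 3\rceil\lfloor 3\rfloor}=2^{15-9}=2^6=64$, so it suffices to exhibit a family of $64$ graphs on $[6]$ whose pairwise symmetric differences all contain a triangle. As in the proofs of Propositions~\ref{lem:MK3_5} and the $n=4$ case, I would aim for a \emph{linear} family (a $GF(2)$-subspace of the edge space of $K_6$), because then the triangle-containment condition for all $\binom{64}{2}$ pairs collapses to a single requirement: every nonzero member of the subspace contains a triangle. A family of size $64=2^6$ is a $6$-dimensional subspace of the $15$-dimensional space $GF(2)^{E(K_6)}$.

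First I would try to build such a subspace with as much symmetry as possible to keep the verification short. The natural symmetry to exploit is the cyclic $\mathbb{Z}_6$ action (vertices on a regular hexagon) or, better, the richer symmetry coming from a suitable vertex labeling; one can also attempt a construction analogous to the pentagon one in Proposition~\ref{lem:MK3_5}, taking a single generating graph $G_1$ that is triangle-rich and letting its rotations $G_2,\dots,G_6$ generate the space. One would then need the six rotated generators to span a $6$-dimensional space (rather than collapsing to lower dimension, as happened for the pentagon where the all-ones relation cut the dimension in half) and, if they span more than dimension $6$, to cut down to a well-chosen $6$-dimensional subspace. Because $64=2^6$ exactly meets the bound, I expect there is little slack: the subspace must be chosen so that it contains \emph{no} nonzero triangle-free graph, and triangle-free graphs on $6$ vertices include all bipartite graphs (in particular $K_{3,3}$, which has $9$ edges) as well as $C_5$ with an isolated vertex, $C_6$, etc.

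The main obstacle, and the crux of the proof, is precisely verifying that the chosen $6$-dimensional subspace avoids every nonzero triangle-free graph. A brute-force check over all $63$ nonzero members is feasible but tedious; the symmetry of the construction should reduce this to checking one representative per orbit of the automorphism group acting on the subspace, exactly as the pentagon construction reduced the check to $I=\{1\}$, $I=\{1,2\}$, $I=\{1,3\}$. I would organize the verification by the weight (number of edges) of the members: members with few edges obviously cannot be the dense triangle-free graphs, and the genuinely dangerous cases are the high-edge triangle-free graphs, above all the copies of $K_{3,3}$ and the maximal bipartite graphs. Thus the heart of the argument is to confirm that the subspace contains no $K_{3,3}$ and, more generally, no bipartite graph on $[6]$; equivalently, that no nonzero codeword is an even-degree-free bipartite configuration. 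I would therefore select the generators so that a short orbit-representative computation rules out all triangle-free members, and present that finite check (organized by symmetry) as the verification, completing the proof.
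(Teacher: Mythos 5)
Your overall plan---match the upper bound $2^{\binom{6}{2}-9}=64$ of Proposition~\ref{thm:Wilex} by a \emph{linear} family, so that the pairwise condition collapses to ``every nonzero codeword contains a triangle''---is exactly the strategy of the paper's proof. But your proposal stops at the plan: it never exhibits a concrete $6$-dimensional subspace, never verifies that any proposed generating set spans dimension $6$, and never carries out the triangle check. Those steps are the entire mathematical content of the lower bound, and they are where the difficulty lies. In particular, your first-choice construction (one triangle-rich graph together with its five rotations under the $\mathbb{Z}_6$ action) is not shown to work, and the paper in fact retreats from that level of symmetry: its generators are seven graphs, namely the four edge-disjoint triangles with edge sets $\{12,23,13\}$, $\{34,45,35\}$, $\{56,16,15\}$, $\{24,46,26\}$ and three $K_4$'s, chosen so that every edge of $K_6$ is covered \emph{exactly twice}. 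That double-covering property is what pins the dimension at $6$ (each member has precisely the two representations $G_I=G_{[7]\setminus I}$), and it also drives the verification: one can always pass to the representation with $|I\cap\{5,6,7\}|\le 1$ and then a short case analysis shows a triangle survives in $G_I$. None of this appears, even in outline, in your proposal; ``select the generators so that a short orbit-representative computation rules out all triangle-free members'' restates the goal rather than achieving it. So there is a genuine gap: the proposal is a correct reduction plus a declaration of intent, not a proof.

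A secondary slip: you write that the heart of the matter is to confirm the subspace contains no $K_{3,3}$ ``and, more generally, no bipartite graph on $[6]$.'' Excluding bipartite members is not sufficient, since triangle-free is strictly weaker than bipartite on six vertices (e.g.\ $C_5$ plus an isolated vertex), as your own earlier list of dangerous graphs correctly shows; the check must exclude \emph{all} nonzero triangle-free members.
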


\proofka
The value of the upper bound given by Proposition~\ref{thm:Wilex} is $2^6$ for $L=K_3$ and $n=6$, so we need to prove only the lower bound.
\smallskip
\par\noindent
To this end we give a construction of $64$ graphs forming a graph family code on $[6]$ for $K_3$. The construction will have several similarities to that in Proposition~\ref{lem:MK3_5} though with somewhat less symmetry. But again our graphs will form a vector space in the sense of Remark~6 to be specified through a set of seven generators that altogether cover each one of the edges of the underlying $K_6$ exactly twice, so every member of our graph family will have exactly two representations by the generators just as in the proof of Proposition~\ref{lem:MK3_5}. Here are the details.
\smallskip
\par\noindent
Think about the $6$ vertices $1,\dots,6$ as being on a circle in the vertices of a regular hexagon in their natural order as we go around the circle. Our first four generator graphs are the following four edge-disjoint triangles (plus three isolated points) given by their edge sets as follows.
$$E(G_1)=\{12,23,13\}, E(G_2)=\{34,45,35\}, E(G_3)=\{56,16,15\}, E(G_4)=\{24,46,26\}.$$
The other three graphs are three $K_4$'s (plus two isolated vertices) that are rotations of each other, in particular,
$$E(G_5)=\{12,24,45,15,14,25\}, E(G_6)=\{23,35,56,26,25,36\},$$ $$E(G_7)=\{34,46,16,13,36,14\}.$$
It is easy to check that the above seven graphs cover each edge of the underlying $K_6$ exactly twice. Just as in the proof of Proposition~\ref{lem:MK3_5} this implies that the generated family of graphs of the form $$G_I=\oplus_{i\in I}G_i$$ where $I$ runs through all subsets of $[7]$ contains exactly two representations of this form for each of its members, namely $$G_I=G_J\ {\rm if\ and\ only\ if}\ J=[7]\setminus I.$$
Thus our family has $2^6=64$ members that matches our upper bound. Now we have to show that the symmetric difference of every pair of our graphs contains a triangle. Since the family is closed under symmetric difference this is equivalent to every $G_I$ except $G_{\emptyset}=G_{[7]}$ containing a triangle. To show this we consider the representation of each of our graphs as $G_I$ where $I$ contains at most one of the three $K_4$ generators, that is $|I\cap \{5,6,7\}|\le 1$. When $I\cap \{5,6,7\}=\emptyset$ but $I$ itself is nonempty then this is trivial as in such a case $G_I$ is the union of some of the edge-disjoint graphs $G_1,\dots,G_4$ each of which is a triangle itself. In case $|I\cap \{5,6,7\}|=1$, then by symmetry we may assume w.l.o.g. that $I\cap \{5,6,7\}=\{5\}$. Then if we also have $\{1,2\}\subseteq I$ then the triangles on vertices $1,3,4$ and $2,3,5$ (and two more) will be contained in $G_I$. So we may assume that at least one of $G_1$ and $G_2$ is not part of our representation of $G_I$ and by symmetry, we may assume $2\notin I$. But then to avoid the triangles on vertices $1,4,5$ and $2,4,5$ being in $G_I$ we need both $3\in I$ and $4\in I$. In this case, however, we will have the triangle on vertices $4,5,6$ present in $G_I$. This completes the proof. \qed

\medskip
\par\noindent
Recall ${\cal C}_{\rm odd}$ be the class of all graphs containing an odd cycle. Since $ex(n,\C_{\rm odd})=ex(n,K_3)$ the upper bound of Proposition~\ref{thm:Wilex} is also $2^{{n\choose 2}-\lceil\frac{n}{2}\rceil\lfloor\frac{n}{2}\rfloor}$ for $M_{{\cal C}_{\rm odd}}(n)$. Since $K_3\cong C_3$ is an odd cycle, we obviously have $M_{K_3}(n)\le M_{{\cal C}_{\rm odd}}(n)$ and so by Propositions~\ref{prop:triv34}, \ref{lem:MK3_5} and \ref{lem:MK3_6} the previous upper bound is also sharp for $M_{{\cal C}_{\rm odd}}(n)$ when $n\in\{3,4,5,6\}$.
Although we could not prove that $M_{K_3}(7)$ is also equal to this upper bound, we can show this at least for $M_{{\cal C}_{\rm odd}}(7)$.

\medskip
\par\noindent
\begin{prop}\label{lem:MC7}
$$M_{{\cal C}_{\rm odd}}(7)=2^9.$$
\end{prop}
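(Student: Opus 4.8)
The plan is to match the upper bound $M_{\C_{\rm odd}}(7)\le 2^9$, which is immediate from Proposition~\ref{thm:Wilex} together with $ex(7,\C_{\rm odd})=ex(7,K_3)=\lceil 7/2\rceil\lfloor 7/2\rfloor=12$ (so $2^{\binom{7}{2}-12}=2^9$), by exhibiting a family of size $2^9$. As in Propositions~\ref{lem:MK3_5} and~\ref{lem:MK3_6}, I would look for a \emph{linear} family, i.e.\ a $9$-dimensional subspace $W$ of the edge space $GF(2)^{E(K_7)}$ closed under $\oplus$. Since $W$ is then closed under symmetric difference, the requirement that every pairwise symmetric difference contain an odd cycle is equivalent to the single condition that every \emph{nonzero} member of $W$ be non-bipartite.

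The key simplification, and the reason the odd-cycle version is more tractable than the triangle version, is that non-bipartiteness has a clean linear description: a graph $F$ is bipartite precisely when $F\subseteq\delta(S)$ for some bipartition $[7]=S\cup\bar S$, where $\delta(S)$ denotes the set of edges joining $S$ to $\bar S$. Thus $W$ is good if and only if it meets each of the $2^6$ ``cut'' coordinate-subspaces $\{F:F\subseteq\delta(S)\}$ only in the empty graph. The largest cuts (those with $|S|=3$) have $|\delta(S)|=12$ edges, so $\dim W+|\delta(S)|=9+12=21=\binom{7}{2}$; by complementation, $W\cap\{F:F\subseteq\delta(S)\}=\{0\}$ is then equivalent to the restriction of $W$ to the $9$ ``interior'' edges $E(K_7[S])\cup E(K_7[\bar S])$ being a bijection onto $GF(2)^9$, that is, to a nonsingular $9\times 9$ minor of a generator matrix of $W$; for the smaller cuts ($|S|\in\{1,2\}$) one only needs this restriction to be injective.

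To produce such a $W$ I would imitate the recipe of the previous two propositions: place the vertices $1,\dots,7$ symmetrically (say at the corners of a regular heptagon) and hand-design a small collection of generator graphs -- for instance some edge-disjoint triangles together with a few rotationally related larger subgraphs, in the spirit of the four triangles and three copies of $K_4$ used for $n=6$ -- keeping each edge of $K_7$ covered exactly twice as a convenient device to force $\dim W=9$ (so that, exactly as before, the sum of all generators is empty and each member of $W$ has two representations). Enough rotational symmetry in the chosen generating set would collapse the $35$ interior-minor conditions above to a small number of orbit representatives, each of which can then be verified by hand by exhibiting an explicit odd cycle in the corresponding codeword.

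The main obstacle is finding generators that satisfy \emph{all} of these conditions simultaneously. Because the transversality $9+12=21$ sits exactly at the threshold, a ``generic'' $9$-dimensional subspace will \emph{not} avoid every maximal cut-subspace, so one must genuinely exploit the special cut-structure of $K_7$ and the symmetry of the construction. I expect the delicate point to be precisely the nonzero codewords that contain no triangle: these are the ones on which the triangle-based approach of Propositions~\ref{lem:MK3_5}--\ref{lem:MK3_6} would break down -- and indeed the triangle analogue $M_{K_3}(7)$ is left open -- so for $\C_{\rm odd}$ one must check that every such triangle-free nonzero codeword nonetheless carries a longer odd cycle, a $5$-cycle or a $7$-cycle. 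Establishing this for the chosen symmetric generating set is where the real work lies.
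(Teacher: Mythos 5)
Your framework is the right one, and it matches the paper's strategy in spirit: a linear ($GF(2)$-closed) family of dimension $9$, generators chosen with rotational symmetry and covering each edge of $K_7$ exactly twice (so that the generated space has $2^9$ distinct members, each with two representations), and the reduction, via linearity, to the single condition that every nonzero codeword contain an odd cycle, i.e.\ be non-bipartite. Your cut-space reformulation and the transversality count $9+12=21=\binom{7}{2}$ are correct and are a nice way to see why the construction must be special rather than generic.

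However, there is a genuine gap: you never produce the construction, and you never perform the verification -- you explicitly defer both (``I would imitate the recipe\dots'', ``where the real work lies''). These two items \emph{are} the proof; everything you do write down (upper bound, linearity reduction, double-cover device) is the routine part already present in Propositions~\ref{lem:MK3_5} and~\ref{lem:MK3_6}. For comparison, the paper's proof takes as generators the seven triangles with edge sets $\{i(i+1),\,(i+1)(i+3),\,i(i+3)\}$ (indices mod $7$), which form a Steiner triple system covering each edge once, together with the three edge-disjoint $7$-cycles consisting of the distance-$1$, distance-$2$ and distance-$3$ edges of the heptagon, again covering each edge once -- ten generators, each edge covered exactly twice. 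The verification is then a genuinely nontrivial case analysis: for a mixed codeword one passes to the representation with at most three triangles, and uses the structural fact that each triangle meets each $7$-cycle in exactly one edge to locate two consecutive uncovered edges of a $7$-cycle that close up to a triangle; the case of two $7$-cycles uses that their union contains seven triangles, each generator triangle killing exactly three of them; and the case of exactly three triangles with two $7$-cycles requires switching to the complementary representation and a further argument. None of this is predictable from symmetry alone (the $35$ conditions do not simply ``collapse''), so your proposal, as written, establishes only the upper bound $M_{\C_{\rm odd}}(7)\le 2^9$ and a plausible plan for the lower bound, not the equality.
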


\proofka
The upper bound $2^{{n\choose 2}-\lceil\frac{n}{2}\rceil\lfloor\frac{n}{2}\rfloor}$ is equal to $2^9$, so it is enough to prove that this is also a lower bound. This we do similarly as in the proofs of Propositions~\ref{lem:MK3_5} and \ref{lem:MK3_6}.
\smallskip
\par\noindent
Again, we think about the seven vertices forming the set $[7]$ as the vertices of a regular $7$-gon around a cycle in their natural order. We define $7+3=10$ simple graphs $G_1,\dots,G_7$ and $G_8,\dots,G_{10}$ that will generate our family.
Let $G_1$ be the triangle with edges $12,24,14$ and $G_2,\dots,G_7$ be its six possible rotated versions, that is the triangles with edge sets $\{23,35,25\}, \{34,46,36\},\dots,\{17,13,37\}$, respectively. Note that these seven triangles cover all pairs of vertices exactly once, that is, they form a Steiner triple system. The three other graphs $G_8,G_9,G_{10}$ are three edge-disjoint seven-cycles, namely those with edge sets $$\{12,23,34,45,56,67,17\}, \{13,35,57,27,24,46,16\}, \{14,47,37,36,26,25,15\},$$ respectively. Note that these three graphs also cover all pairs of vertices exactly once and that the edge sets of a $G_i$ for $i\in [7]$ and  $G_j$ with $j\in \{8,9,10\}$ intersect in exactly one element.
Since our ten graphs cover the edges of the underlying $K_7$ exactly twice, just as in the proofs of Propositions~\ref{lem:MK3_5} and \ref{lem:MK3_6} the generated family $$\oplus_{i\in I}G_i$$ as $I$ runs over all subsets of $\{1,\dots,10\}$ will have exactly $2^9$ distinct members each of which is represented by two subsets of $\{1,\dots,10\}$, some $I$ and its complement. All we are left to show for proving $M_{{\cal C}_{odd}}(7)\ge 2^9$
is that each such $G_I$ except $G_{\emptyset}=G_{[10]}$ contains an odd cycle. If $I\subseteq [7]$, this is obvious and so is also if $I\subseteq\{8,9,10\}$. When both $I\cap [7]$ and $I\cap\{8,9,10\}$ are nonempty, then we consider that representation $G_I$ which has $|I\cap [7]|\le 3$. If we have $|I\cap\{8,9,10\}|=1$ then whichever $7$-cycle we have (that is, whichever of $G_8,G_9,G_{10}$) it will have two consecutive edges that do not appear in either of the at most three triangles. If we take the first pair of such edges (as we go along our $7$-cycle in an appropriate direction) for which the previous one is an edge of one of our triangles (since we take at least one triangle and each triangle intersects each $7$-cycle, such an edge must exist), then the construction ensures that these two consecutive edges close up to a $K_3$ in our $G_I$. In case we have two $7$-cycles in our $G_I$ representation, then those create $7$ distinct $K_3$'s in their union. Each of our triangles
intersects exactly three of those seven $K_3$'s created, so if we have $|I\cap [7]|\le 2$ then at least one of these seven $K_3$'s remain untouched. Thus we are left with the case of two $7$-cycles and exactly three triangles. For this case let us switch to the complementary representation with four triangles and one $7$-cycle. By symmetry, we may assume that our $7$-cycle is $G_8$. If the four triangles are such that two consecutive edges of $G_8$ do not appear in any of them then we can finish the argument as before. If this is not the case, then the four triangles must leave three such edges of $G_8$ uncovered which form a matching. Because of symmetry we may assume that these are the edges $12, 34, 56$. This also tells us exactly which are the four triangles we have in the representation of $G_I$, namely those that contain the remaining four edges, that is, $G_2, G_4, G_6$ and $G_7$. In this case $G_I$ contains the $K_3$,  for example, on the vertices $2,5,6$. Finally, if we have all the three $7$-cycles in our representation then the complementary representation has no $7$-cycle at all and this case we have already covered. This completes the proof. \qed

\subsection{Containing a prescribed induced subgraph}\label{subsect:induced}

\medskip
\par\noindent
In this subsection we discuss local graph classes mentioned in the second example after Definition~\ref{defi:local}. Here we have a fixed finite simple graph $L$ and consider the family $\LL$ of all graphs containing $L$ as an induced subgraph. Recall that in this case we let $M_{L,{\rm ind}}(n)$ denote $M_{\LL}(n)$ and similarly, we denote $R_{\LL}(n)$ and $DC(\LL)$ by $R_{L,{\rm ind}}(n)$ and $DC(L,{\rm ind})$, respectively. In this section we
prove that requiring the subgraphs to be induced does not change
the answer from that of subsection ~\ref{subsect:noninduced}. The upper bound, of course, trivially carries over from the non-induced case, while the lower bound strengthens the one in Theorem \ref{thm:DCL}.

\medskip
\par\noindent
\begin{thm}
\label{thm:DCLindb}
For any fixed graph $L$ we have $$DC(L,{\rm ind})=\frac{1}{\chi(L)-1}.$$
\end{thm}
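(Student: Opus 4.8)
The plan is to prove the two bounds separately. The upper bound $DC(L,{\rm ind})\le\frac{1}{\chi(L)-1}$ is immediate: a graph containing $L$ as an \emph{induced} subgraph in particular contains it as a subgraph, and since the chromatic number of an induced subgraph never exceeds that of the host, every member of the local class $\LL=\{H:L\subseteq_{ind}H\}$ has chromatic number at least $\chi(L)$, while $L\in\LL$ shows $\chi_{\rm min}(\LL)=\chi(L)$; Corollary~\ref{cor:DCub} then gives the bound. (Equivalently one checks $ex(n,\LL)=ex(n,L)$: the dual family consisting of all subgraphs of a Tur\'an-extremal $L$-free graph has induced-$L$-free symmetric differences, since subgraphs of an $L$-free graph are $L$-free, so Proposition~\ref{thm:Wilex} together with (\ref{eq:EStS}) applies.) This already matches the non-induced upper bound, so all the content is in the lower bound.

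For the lower bound the first step is to reduce to a \emph{base} construction via the tensoring argument from the proof of Theorem~\ref{thm:limes}. The crucial point is that this argument respects inducedness: in a $K_w$-design on $K_N$ every pair of vertices lying in a common block $K^{(j)}$ has its edge assigned to that block alone, so the subgraph of a product difference $G_{\msbf a}\oplus G_{\msbf b}$ \emph{induced} on $V(K^{(j)})$ is exactly the base difference $G_{a_j}\oplus G_{b_j}$. Hence an induced copy of $L$ in some base difference lifts to an induced copy in the product, giving $DC(L,{\rm ind})=\sup_w R_{L,{\rm ind}}(w)$. It therefore suffices to exhibit, for each $\eps>0$, a single family on some vertex set whose pairwise symmetric differences all contain $L$ as an induced subgraph and whose rate exceeds $\frac{1}{\chi(L)-1}-\eps$.

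Constructing such a base family is the heart of the matter, and here the induced version genuinely departs from Theorem~\ref{thm:DCL}. The soft arguments available for the non-induced case fall short: writing $G_{L,{\rm ind}}$ for the analogue of the graph $G_L$ in the proof of Theorem~\ref{thm:DCL}, the vertex-transitive bound $M_{L,{\rm ind}}(n)=\alpha(G_{L,{\rm ind}})\ge 2^{\binom n2}/(F^{\rm ind}_n(L)+1)$ (and likewise a random/alteration argument) only yields rate $\frac{1}{\tau(L)-1}$, where the number $F^{\rm ind}_n(L)$ of induced-$L$-free graphs is $2^{(1-1/(\tau(L)-1)+o(1))\binom n2}$ for a coloring parameter $\tau(L)\ge\chi(L)$ (Pr\"omel--Steger) that may be strictly larger: already for $L=K_{1,3}$ every co-bipartite graph is claw-free, forcing $\tau=3>2=\chi$. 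Thus an explicit structured construction is required. The route I would try is to start from a Tur\'an-type $L$-free host and superimpose a code on a family of disjoint ``windows'' engineered so that every nonzero difference reproduces $L$ \emph{exactly}, as edges \emph{and} non-edges, on at least one window, tiling windows to push the rate up to $\frac{1}{\chi(L)-1}$.

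The main obstacle is precisely this control of \emph{non-edges}. For a subgraph copy it suffices to force a few pairs to differ; an induced copy additionally demands that the remaining pairs on the same $|L|$ vertices be non-edges of the symmetric difference, and edges held fixed across the family contribute nothing to any difference. Reconciling ``enough variation to realize $L$ as an induced subgraph in every difference'' with ``enough rigidity to prevent any difference from being induced-$L$-free'' at the optimal rate $\frac{1}{\chi(L)-1}$ is the crux, and this is where I expect the real work — and any appeal to extremal or structural results beyond those used for Theorem~\ref{thm:DCL} — to lie.
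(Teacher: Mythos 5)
Your upper bound is fine and matches the paper's (which simply notes it carries over from Theorem~\ref{thm:DCL}), and your diagnosis of why the non-induced argument cannot be repeated is exactly the paper's own Remark~\ref{partition_number}: the number of induced-$L$-free graphs is $2^{(1-1/r(L))n^2/2+o(n^2)}$ for the Alekseev--Bollob\'as--Thomason partition parameter $r(L)$, which can exceed $\chi(L)-1$, so the degree-counting argument over \emph{all} graphs only yields rate $1/r(L)$ (your claw example is a correct instance of this). Your design-based lifting step is also sound --- in a $K_w$-design every pair of vertices inside a block is covered by that block alone, so induced copies survive the product; indeed the paper's Theorem~\ref{thm:limes} is already stated and proved for induced containment. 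But at the decisive point, producing base families of rate approaching $\frac{1}{\chi(L)-1}$, you offer only a sketch (``windows'' in a Tur\'an-type host where every difference reproduces $L$ together with its non-edges) and you explicitly concede that making this work is unresolved. That is the entire content of the theorem, so the proposal has a genuine gap: nothing in it beats the rate $1/r(L)$ that you yourself identify as insufficient.

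What the paper does instead is not an explicit construction but a change of ambient space: restrict to balanced $k$-partite graphs on a fixed partition. The symmetric difference of two such graphs is again balanced $k$-partite, and Lemma~\ref{lemma:kpartiteub}, proved via Szemer\'edi's regularity lemma together with the induced embedding lemma (Lemma~\ref{lemma:regularity}), shows that only $2^{\big(1-\frac{1}{\chi(L)-1}\big)n^2/2+o(n^2)}$ such graphs are induced-$L$-free: inside this universe the structures that inflate $r(L)$ (for your claw example, unions of two cliques) are unavailable, since parts of size $n/k$ force large independent sets, and Tur\'an's theorem applied to the reduced graph of a regular partition yields the chromatic-number exponent. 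Then the greedy independence bound $\alpha\ge |V|/(\Delta+1)$ applied to the auxiliary graph $G_{L,k}$ on $2^{(1-\frac{1}{k})n^2/2}$ vertices gives rate at least $\frac{1}{\chi(L)-1}-\frac{1}{k}+o(1)$, and letting $k\to\infty$ finishes. So the missing idea is precisely this: constrain the code's universe so that counting within the constrained family, rather than an engineered rigidity of differences, controls the non-edges.
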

\begin{remark}\label{partition_number}
{\rm Note that despite the apparent similarity,
the proof of Theorem \ref{thm:DCL} does not carry over to show
Theorem \ref{thm:DCLindb}. Still, it is possible to
describe the asymptotic number of induced $L$-free graphs for a fixed graph $L$, by introducing the partition number $r(L)$. Define $r(L)$ as the largest integer $r$ so that there is some integer $s$, $0 \leq s \leq r$ such that the vertices of $L$ cannot be covered by $s$ cliques and $r-s$ independent sets. Results obtained independently by Alekseev \cite{Ale} and by Bollob\'as and Thomason \cite{BT1,BT2} imply that the
number of induced-$L$-free graphs on $n$ vertices
is $2^{(1-1/r(L))n^2/2 +o(n^2)}$. Note that $r(L) \geq \chi(L)-1$, as the vertices of $L$ cannot be covered by $\chi(L)-1$ independent sets. There are cases when equality holds and thus the required result follows (an example for that is $L=C_5$, the $5$-length cycle), but in general, $r(L)$ can be much larger than $\chi(L)-1$, as shown, for example, by any long (even or odd) cycle. Hence the proof of Theorem \ref{thm:DCL} does not give the required bound
for an arbitrary $L$.}
\end{remark}

The main tool in the proof of Theorem \ref{thm:DCLindb} is
Lemma \ref{lemma:kpartiteub}, which bounds the number of balanced $k$-partite induced-$L$-free graphs.

\begin{lemma}\label{lemma:kpartiteub}
For a fixed positive integer $k$ and a fixed graph $L$, consider the set of balanced $k$-partite graphs $G$ on $k$-classes $A_1, \dots, A_k$, each having size $n/k$. Among those graphs,
at most $2^{\big(1-\frac{1}{\chi(L)-1}\big)n^2/2+o(n^2)}$ do not
contain an induced subgraph isomorphic to $L$.
\end{lemma}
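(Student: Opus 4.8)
The plan is to bound the number of induced-$L$-free balanced $k$-partite graphs through Szemer\'edi's Regularity Lemma, reducing everything to a Tur\'an-type bound on the number of medium-density regular pairs. Write $r=\chi(L)$. One case is trivial: if $k\le r-1$ then every $k$-partite graph is $(r-1)$-colourable, so none of them can contain $L$ (which needs $r$ colours) as an induced subgraph, and all $2^{(1-1/k)n^2/2}\le 2^{(1-1/(r-1))n^2/2}$ of them satisfy the bound. So assume $k\ge r$, and fix $\eta>0$. I would apply the Regularity Lemma in the form that refines a prescribed initial partition, starting from $A_1,\dots,A_k$, obtaining an $\eps$-regular partition of $V(G)$ into clusters $V_1,\dots,V_m$ (plus an exceptional set of size at most $\eps n$) with each $V_a$ contained in a single class $A_i$ and $m\le M(\eps)$. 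Call a pair $(V_a,V_b)$ lying in \emph{different} classes \emph{gray} if its density is in $[\delta,1-\delta]$ (for a small constant $\delta>0$ fixed later) and white/black otherwise; pairs inside one class have density $0$ since $G$ is edgeless there. Let the \emph{gray graph} $R$ on $\{V_1,\dots,V_m\}$ have the gray pairs as its edges.

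The crucial structural step is the claim that if $G$ is induced-$L$-free then $R$ is $K_r$-free. Suppose $R$ contained clusters $C_1,\dots,C_r$ that are pairwise gray. Since $\chi(L)=r$, fix a proper $r$-colouring of $V(L)$ with colour classes $I_1,\dots,I_r$ and map $I_j$ into $C_j$. Every edge of $L$ joins two different colour classes and hence corresponds to a gray pair, along which an edge can be realized; every non-edge either lies inside one $I_j$ (mapped into a single cluster, where $G$ is edgeless, so the two chosen vertices are automatically non-adjacent) or joins two classes, where the gray pair also lets us realize a non-edge. Thus the induced embedding lemma (valid once $\eps$ is small relative to $\delta$ and $|V(L)|$ and the clusters are large) produces an induced copy of $L$ in $G$, a contradiction. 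Hence $R$ is $K_r$-free, and Tur\'an's theorem \cite{Turan} gives $e(R)\le (1-\frac1{r-1})\binom m2$.

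It remains to count. Grouping the graphs by their regular partition and by the profile of densities (discretised into short intervals), the number of $k$-partite graphs with a fixed such pattern is at most $\prod_{a<b}\binom{|V_a||V_b|}{d_{ab}|V_a||V_b|}\le 2^{\sum_{a<b} h(d_{ab})|V_a||V_b|}$, where $h$ is the binary entropy function. A gray pair contributes at most $|V_a||V_b|$ to the exponent, and by the Tur\'an bound there are at most $(1-\frac1{r-1})\binom m2$ of them, so together they contribute at most $(1-\frac1{r-1})\frac{n^2}2$; each white or black pair contributes at most $h(\delta)|V_a||V_b|$, hence all of them together at most $h(\delta)\frac{n^2}2$; pairs inside a class contribute $0$. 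The exceptional set and the at most $\eps\binom m2$ irregular pairs contribute at most $O(\eps)n^2$, while the numbers of partitions ($\le M^n$) and of density profiles are $2^{o(n^2)}$. Choosing $\delta$ with $h(\delta)$ small and then $\eps$ small enough (for both the embedding lemma and these error terms) makes the total exponent at most $(1-\frac1{r-1}+\eta)\frac{n^2}2+o(n^2)$; since $\eta>0$ was arbitrary, the claimed bound follows.

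The main obstacle will be the structural claim and its embedding lemma rather than the bookkeeping: one must check that the induced embedding still runs when several vertices of $L$ are sent to the same cluster, which is exactly where the absence of edges inside a class (hence inside a cluster) is combined with $\chi(L)=r$ to force the right non-adjacencies for free. The second delicate point is quantitative — coordinating the limits $\delta\to0$ and $\eps\to0$ so that the constant $h(\delta)$ and the regularity error are both swallowed by the $o(n^2)$ term while $\eps$ stays small enough for the embedding; this is standard, but the parameters must be chosen in the right order ($\delta$ first, then $\eps$, then $n$).
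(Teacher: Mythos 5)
Your proposal is correct and follows essentially the same route as the paper's proof: Szemer\'edi's regularity lemma applied to a refinement of the given $k$-partition, an induced embedding argument showing that the regular pairs of medium density span a $K_{\chi(L)}$-free reduced graph (this is exactly the paper's Lemma~\ref{lemma:regularity} together with its auxiliary graph $G'$), Tur\'an's theorem to bound the number of such pairs, and counting estimates showing every other contribution is $2^{o(n^2)}$ after letting $\delta,\eps\to 0$. The only (purely notational) slip is that your ``gray'' pairs must by definition also be $\eps$-regular --- otherwise the claim that $R$ is $K_r$-free is false for irregular medium-density pairs --- but since your counting already handles irregular pairs separately and your embedding step explicitly invokes regularity, this is clearly what you intend.
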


In what follows we will assume some familiarity with Szemer\'edi's regularity lemma and the terminology related to it. A good introduction to these notions (with a full proof of the lemma itself) can be found, for example, in Section 7.4 of Diestel's book \cite{Diestel}.

Before presenting the proof of Lemma \ref{lemma:kpartiteub}, whose
proof relies on the regularity lemma, we need to establish the following
auxiliary claim about finding induced subgraphs using regularity. Versions
of this result have been used before, for completeness we
include a simple proof.

\begin{lemma}\label{lemma:regularity}
For every $0<\delta<\frac{1}{2}$ and a graph $L$, there exist positive constants $\eps=\eps(\delta, L)$ and $n_0=n_0(\delta, L)$ with the following property.
Suppose $G$ is a graph whose vertices are partitioned into $\chi(L)$
independent sets, $V_1, V_2, \dots, V_{\chi(L)}$ of equal size which is
at least $n_0$. If the pairs $(V_i, V_j)$ are $\eps$-regular and if their density satisfies $d(V_i, V_j)\in (\delta, 1-\delta)$ for all $i, j$, then $G$ contains an induced copy of $L$.
\end{lemma}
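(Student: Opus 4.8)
The plan is to embed $L$ into $G$ greedily, vertex by vertex, using a proper $\chi(L)$-colouring of $L$ to decide into which part of $G$ each vertex of $L$ must go. Write $r=\chi(L)$ and $\ell=|V(L)|$, and fix a proper colouring of $L$ with colour classes $C_1,\dots,C_r$; these are independent sets in $L$. I would build an injection $\phi\colon V(L)\to V(G)$ sending every vertex of $C_i$ into the part $V_i$. Because each $V_i$ is an independent set of $G$ and each $C_i$ is an independent set of $L$, all within-class pairs are automatically non-edges on both sides, so such pairs never need attention; only pairs of vertices lying in distinct colour classes have to be controlled, and these sit inside the regular pairs $(V_i,V_j)$.

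The embedding is carried out one vertex at a time while maintaining, for every not-yet-embedded vertex $y$ of $L$, a candidate set $Y\subseteq V_{c(y)}$ consisting of the vertices of $V_{c(y)}$ that are adjacent to $\phi(x)$ for every already-embedded neighbour $x$ of $y$ in a different colour class, and non-adjacent to $\phi(x)$ for every already-embedded non-neighbour $x$ of $y$ in a different colour class; initially $Y=V_{c(y)}$. The key quantitative input is the standard consequence of $\eps$-regularity: in an $\eps$-regular pair $(A,B)$ of density $d$, for any $Y\subseteq B$ with $|Y|\ge\eps|B|$ all but at most $\eps|A|$ vertices $a\in A$ satisfy $|N(a)\cap Y|\ge(d-\eps)|Y|$, and, applying this to the complementary graph which is $\eps$-regular of density $1-d$, all but at most $\eps|A|$ vertices $a\in A$ satisfy $|Y\setminus N(a)|\ge(1-d-\eps)|Y|$. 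Thus, when it is time to embed a vertex $v$, I would discard from its candidate set the at most $\ell\eps|V_{c(v)}|$ vertices that are atypical for some regular pair governing a remaining vertex, together with the at most $\ell$ already used vertices of $V_{c(v)}$, and take $\phi(v)$ to be any survivor. Since the density of every pair lies in $(\delta,1-\delta)$, imposing one constraint shrinks a candidate set by a factor of at least $\min(d-\eps,1-d-\eps)\ge\delta-\eps$, so after at most $\ell$ steps every candidate set still has size at least $(\delta-\eps)^{\ell}|V_i|$.

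Choosing the constants is then bookkeeping. I would take $\eps=\eps(\delta,L)$ small enough that $(\delta-\eps)^{\ell}>(\ell+1)\eps$, which simultaneously guarantees that every candidate set stays above the threshold $\eps|V_i|$ needed to invoke regularity and that the pool of good survivors, of size at least $(\delta-\eps)^{\ell}|V_i|-\ell\eps|V_i|-\ell$, is nonempty once $n_0=n_0(\delta,L)$ is large enough that $\eps|V_i|>\ell$. The resulting $\phi$ is an induced embedding: cross-class edges and non-edges of $L$ are respected by the construction of the candidate sets, injectivity is secured by avoiding previously used vertices, and within-class pairs are correct automatically as noted above.

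The main obstacle, and the reason the hypothesis insists on densities in $(\delta,1-\delta)$ rather than merely bounded below, is that an induced embedding must preserve non-edges as well as edges, so at every step I need both the neighbourhood and the non-neighbourhood of the chosen image to be large inside each remaining candidate set. This is exactly what forces the regularity estimate to be applied to both the graph and its complement, and what makes the uniform lower bound $\delta-\eps$ on the shrink factor (hence the constant choice) go through; a one-sided density bound would be useless here.
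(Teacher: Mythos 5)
Your proof is correct, but it takes a genuinely different route from the paper. The paper first refines the partition, splitting each $V_i$ into $|U_i|$ nearly equal parts (one part per vertex of $L$, where $U_1,\dots,U_{\chi(L)}$ are the colour classes of $L$), observes that the refined pairs are still $\ell\eps$-regular with densities shifted by at most $\eps$, and then invokes as a black box the embedding lemma of Alon, Fischer, Krivelevich and Szegedy (Lemma 3.2 in \cite{AFKS}), which requires disjoint host sets, one per vertex of $L$, and in fact produces $\mu_0\prod_i|S_i|$ induced copies. You instead embed $L$ greedily into the unrefined parts, maintaining candidate sets and using only the elementary ``typical vertices'' consequence of $\eps$-regularity, applied both to each pair and to its complement so as to preserve non-edges as well as edges; multiple vertices of $L$ landing in the same part are handled by injectivity (avoiding the at most $\ell$ used vertices) and by the observation that within-class pairs are automatically non-adjacent on both sides. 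What each approach buys: the paper's argument is shorter on the page and gives a counting statement stronger than what the lemma asserts, at the price of the refinement step and an external citation; yours is self-contained, avoids refinement entirely, and produces the single induced copy that the lemma actually needs. Your bookkeeping is sound --- the constraint $(\delta-\eps)^{\ell}>(\ell+1)\eps$ keeps every candidate set above the $\eps|V_i|$ threshold and leaves a nonempty pool of survivors once $\eps|V_i|>\ell$ --- though you should state explicitly that $\eps$ is also taken smaller than $\delta$ (say $\eps<\delta/2$), so that the shrink factor $\delta-\eps$ is genuinely positive and the displayed inequality can be satisfied.
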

\begin{proof}
Let $l$ be the number of vertices of $L$, and let
$V(L)=U_1\cup \dots \cup U_{\chi(L)}$ be a partition of $L$ into $\chi(L)$ independent sets. The idea is to find a copy of $L$ in $G$ such that the vertices of $U_i$ come from $V_i$, for all $i$.
Having this goal in mind, refine the partition $V_1, \dots, V_{\chi(L)}$
by partitioning
every $V_i$ into $|U_i|$ smaller sets, $V_{i, 1}, \dots, V_{i, |U_i|}$,
of nearly equal sizes. Subdividing $V_i$ does not affect regularity of the new pairs significantly, and so the pairs $(V_{i, j}, V_{k, m})$ are still $l\eps$-regular.
Similarly, the density of the pairs $(V_{i, j}, V_{k, m})$ is in the interval $(\delta-\eps, 1-\delta+\eps)$ if $i\neq k$, and zero otherwise.
Now, we use the following standard lemma (see e.g. Lemma 3.2 in \cite{AFKS}).

\medskip
\par\noindent
{\bf Lemma on regularity and induced subgraphs.} (\cite{AFKS})
{\it For every $0<\delta_0<1$ and $l\in \mathbb{Z}_{>0}$, there
exist positive constants $\eps_0=\eps_0(\delta_0, l)$ and $\mu_0=\mu_0(\delta_0, l)$ with the following property.
Suppose $L$ is a graph on $l$ vertices, $v_1, \dots, v_l$ and
$S_1, \dots, S_l$ is an $l$-tuple of disjoint vertex sets of a large graph $G$ such that every pair $S_iS_j$ is $\eps_0$-regular
with density at least $\delta_0$ if $v_iv_j$ is an edge of $L$ and at most $1-\delta_0$ if $v_iv_j$ is not an edge of $L$. Then, $G$ contains at least $\mu_0 \prod_{i=1}^l |S_i|$ tuples $(w_1, \dots, w_l)\in S_1\times\dots \times S_l$ spanning an induced copy of $L$, where each $w_i$ corresponds to $v_i$.}
\medskip
\par\noindent
Note that identifying the sets $V_{i, j}$ with $S_i$ from this lemma satisfies the conditions, as the pairs $(V_{i, j}, V_{i, k})$ of density $0$ occur only when the corresponding vertices of $L$ belong to the same independent set $U_i$. Therefore, if we set $\delta_0=\delta-\eps, \eps_0=\eps l$ and choose $\eps$ small enough, we conclude that for large
enough $n_0$, $G$ contains at least one induced copy of $L$, as needed.
\end{proof}

\begin{proof} [Proof of Lemma \ref{lemma:kpartiteub}.]
The proof uses Szemer\'edi's regularity lemma to partition an arbitrary induced-$L$-free graph,
after which a standard "cleaning" argument shows that the
main contribution to the total number of such graphs comes from the number of possible bipartite graphs induced by regular pairs of density bounded away from $0$ and $1$. Then, Tur\'an's theorem provides an upper bound for the number of such pairs, completing the proof. The details follow.

Begin by fixing a small $\delta>0$ and the corresponding $\eps=\eps(\delta, L)>0$ from Lemma \ref{lemma:regularity}.
What we have in mind is a not yet fixed induced-$L$-free graph $G$ whose vertex set will be partitioned and we will calculate the number of ways we can connect pairs of vertices so that our graph becomes indeed induced-$L$-free.
By Szemer\'edi's regularity lemma, there is an $\eps$-regular partition $V(G)=V_0\cup V_1\cup \dots\cup V_T$,
where $T$ is a constant, that is, it can be bounded from above
by a number $T_0(\eps,k)$
which does not depend on $n$. We may also assume that this partition refines the original partition $V(G)=A_1\cup\dots\cup A_k$,
i.e., that every $V_i$ for $i\geq 1$ is a subset of some $A_j$.
(This follows from the standard proof of the regularity lemma, cf.
e.g. \cite{Diestel}, that works by iterating the refinement of some
original partition which we can choose to be the one given by
the partite classes $A_i$.)

We split the pairs $(V_i, V_j)$ into three classes - the irregular pairs, the $\eps$-regular pairs with $d(V_i, V_j)\in [0, \delta]\cup [1-\delta, 1]$, and the $\eps$-regular pairs with $d(V_i, V_j)\in (\delta, 1-\delta)$. We first show that the contribution of all pairs except those in the third class to the total number of graphs is negligible, and then we discuss the number of pairs in the third class.

There are at most $(T+1)^n=2^{o(n^2)}$ ways to distribute the vertices
into parts $V_0, \dots, V_{T}$, and there are
at most $2^{\eps n^2}$ ways to choose the edges incident to $V_0$, as $|V_0|\leq \eps n$. Note that there are no edges within the other parts $V_i$, so we have no additional choice here.
The number of ways to associate pairs to classes is at most
$3^{\binom{T}{2}}$, which is a constant (depending on $\eps$).

Further, there are at most $2^{\big(\frac{n}{T}\big)^2}$ ways to choose the edges between each of the irregular pairs, and there are at
most $\eps T^2$ irregular pairs. Hence, the number of choices for the edges between all irregular pairs is at most $2^{\eps n^2}$.
Similarly, for the parts of density close to 0 or 1, we have at
most $2\sum_{i=0}^{\delta (n/T)^2}\binom{(n/T)^2}{i}\leq \frac{\delta n^2}{T^2}\binom{(n/T)^2}{\delta (n/T)^2}\leq \frac{\delta n^2}{T^2}(e\delta^{-1})^{\delta (n/T)^2}$, and there are at most $\binom{T}{2}$ such pairs.
Hence, the total number of choices for the edges in these pairs is
at most $e^{\delta (1-\log \delta) \binom{n}{2}+o(n^2)}$.

Finally, we need to bound the contribution from the pairs
$(V_i, V_j)$ with $d(V_i, V_j)\in (\delta, 1-\delta)$.
Define an auxiliary graph $G'$ on the vertex set $\{V_1, \dots, V_T\}$, in which $V_iV_j$ is an edge if and only if $(V_i, V_j)$
forms an $\eps$-regular pair of density in $(\delta, 1-\delta)$. Lemma \ref{lemma:regularity} shows that
if $G$ contains no induced copy of $L$ then $G'$ cannot contain a subgraph isomorphic to $K_{\chi(L)}$.
Applying Tur\'an's theorem, we conclude that at most
$(1-\frac{1}{\chi(L)-1})T^2/2$ pairs among $V_1, \dots, V_T$
can be $\eps$-regular and have density bounded away from
$0$ and $1$.
Hence, there are at most
$2^{(1-\frac{1}{\chi(L)-1})(T^2/2)\frac{n^2}{T^2}}
\leq 2^{(1-\frac{1}{\chi(L)-1})n^2/2+o(n^2)}$ choices for the edges in this case.

To complete the argument, we let $\delta, \eps\to 0$ and note that the contribution of all pairs except those in the third class is negligible.
We conclude that the number of $k$-partite balanced induced-$L$-free graphs on $n$ vertices is at most
$2^{(1-\frac{1}{\chi(L)-1})n^2/2+o(n^2)}$, as stated. \end{proof}

Using Lemma \ref{lemma:kpartiteub}, the proof of Theorem \ref{thm:DCLindb} follows almost immediately.
\begin{proof}[Proof of Theorem \ref{thm:DCLindb}]
Analogously to what was done in the proof of Theorem \ref{thm:DCL}, now consider the graph
$G_{L,k}$ whose vertices are all balanced $k$-partite graphs
on $n$ vertices, in which two vertices are adjacent if and only if their symmetric difference contains no induced copy of $L$. As shown by Lemma \ref{lemma:kpartiteub}, the maximum degree in this graph is at most
$\Delta(G_{L,k})\leq 2^{\big(1-\frac{1}{\chi(L)-1}\big)n^2/2+o(n^2)}$. On the other hand, $G_{L,k}$ has
$2^{\binom{k}{2}\big(\frac{n}{k}\big)^2}
=2^{(1-\frac{1}{k}) n^2/2}
$ vertices. Hence, the size of its maximum independent set is at least
$$\alpha(G_{L,k})\geq\frac{|V(G_{L,k})|}{\Delta(G_{L,k})+1}\geq
2^{\big(\frac{1}{\chi(L)-1}-\frac{1}{k}\big)n^2/2+o(n^2)}.$$
As $k$ can be arbitrarily large, we conclude that
$DC(L,$ ind$)=\frac{1}{\chi(L)-1}$, completing the proof.
\end{proof}

In the case of bipartite graphs, one can prove a result analogous to Lemma \ref{lemma:kpartiteub} with much better bounds, as shown in \cite{ABBM}, and \cite{AMY} with an improved error term.
\begin{lemma}
\label{l91}
For every fixed bipartite graph $L$ there is some
$\eps=\eps(L)>0$ so that the number of bipartite graphs on two
classes of vertices $A$ and $B$, both of size $m$, that do not
contain an induced copy of $L$ is smaller than
$2^{m^{2-\eps}}$.
\end{lemma}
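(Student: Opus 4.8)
The plan is to pass to the language of $0/1$ matrices and then extract a K\H{o}v\'ari--S\'os--Tur\'an-type sparsity that yields the power saving. First I would fix a proper $2$-colouring $V(L)=X\sqcup Y$ with $|X|=s$, $|Y|=t$ and record the biadjacency matrix $P\in\{0,1\}^{s\times t}$ of $L$. A bipartite graph on classes $A,B$ of size $m$ is nothing but a matrix $M\in\{0,1\}^{A\times B}$, and an induced copy of $L$ (in the orientation $X\to A$, $Y\to B$) is exactly an $s\times t$ submatrix of $M$ equal to $P$ on the nose, matching both its $0$- and its $1$-entries. Since being induced-$L$-free requires avoiding \emph{every} orientation of $P$, the number of induced-$L$-free $M$ is at most the number of $M$ containing no $s\times t$ submatrix equal to this single $P$, so it suffices to bound the latter.

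The extreme cases are instructive. If $L=K_{s,t}$ then $P$ is all-ones and an induced copy is the same as a (not necessarily induced) copy of $K_{s,t}$, so the K\H{o}v\'ari--S\'os--Tur\'an theorem \cite{KST} bounds the number of edges of $M$ by $Cm^{2-1/s}$; hence $M$ is one of at most $\binom{m^2}{Cm^{2-1/s}}\le 2^{O(m^{2-1/s}\log m)}=2^{m^{2-\eps}}$ matrices for any $\eps<1/s$. At the other end, a degenerate edgeless $L$ forces an independent set that is present for trivial reasons once $m$ is large, so the count is $0$. The content of the lemma is therefore the general pattern $P$ containing both a $0$ and a $1$, and the goal is to manufacture a K\H{o}v\'ari--S\'os--Tur\'an input in that case as well.

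To do so I would reformulate avoidance as a codegree condition. For an ordered $s$-tuple $(a_1,\dots,a_s)$ of rows and a type $v\in\{0,1\}^s$, let $c_v$ count the columns whose adjacency pattern to $(a_1,\dots,a_s)$ is exactly $v$; then $M$ realises $P$ on these rows if and only if $c_v$ is at least the multiplicity of $v$ among the columns of $P$ for every $v$. Hence avoidance means that for every row-tuple some type appearing in $P$ is deficient, i.e.\ has $c_v<t$. After splitting $A$ into $s$ random blocks, so that the $i$-th coordinate of a type is tied to a fixed block, and complementing $M$ between $B$ and the blocks on which the chosen type reads $0$, a deficient type becomes a genuine \emph{small common neighbourhood}; the resulting auxiliary graph is then forced to be essentially free of rainbow $K_{s,t}$. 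A multipartite K\H{o}v\'ari--S\'os--Tur\'an estimate pins its edge count at $O(m^{2-1/s})$, and since the block partition and complementation scheme carry only $2^{O(m)}$ bits of information, reconstructing $M$ from this sparse auxiliary datum gives at most $2^{O(m^{2-1/s}\log m)}=2^{m^{2-\eps}}$ matrices after a union bound.

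The main obstacle is precisely the word \emph{some} above: avoidance only guarantees that, for each row-tuple, at least one of the finitely many types is deficient, and pigeonholing to a single globally deficient type retains merely a constant fraction of the tuples. A constant fraction of low-codegree $s$-tuples does not by itself force global sparsity --- a graph that is complete from half of its rows to $B$ and empty from the other half is a counterexample --- so a naive defect version of K\H{o}v\'ari--S\'os--Tur\'an is false. Overcoming this is where the real work lies: one must either run a cleaning/localisation step that isolates a substructure on which a single type is deficient for \emph{all} tuples, or invoke the hypergraph container method for hereditary properties, branching at each exposed entry on which type is deficient. Either route reduces the count to the K\H{o}v\'ari--S\'os--Tur\'an input above and produces a saving $\eps<1/s$ in the exponent; this is in essence the argument of \cite{ABBM}, sharpened in \cite{AMY}.
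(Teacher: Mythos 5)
Your reduction to forbidden $0/1$ patterns and the codegree/type reformulation are both sound, but the proof stops exactly at its decisive step, and you say so yourself: pigeonholing to one globally deficient type keeps only a constant fraction of the $s$-tuples, a ``defect'' K\H{o}v\'ari--S\'os--Tur\'an statement is false (your half-complete/half-empty example is correct), and the two proposed repairs --- a ``cleaning/localisation step'' or ``the hypergraph container method, branching at each exposed entry'' --- are named but never carried out. Neither is routine; the assertion that either route ``reduces the count to the K\H{o}v\'ari--S\'os--Tur\'an input'' is essentially the lemma itself, so as written the argument is a plan whose hardest component is left as a black box. Nor does the closing appeal to \cite{ABBM}, \cite{AMY} repair this as stated: the legitimate derivation from those papers does not follow your sketch, but goes through the universal bipartite graph $U(k)$ ($k$ vertices on one side, $2^k$ on the other, joined in all possible ways) --- every bipartite $L$ is an induced subgraph of $U(k)$ for large $k$, hence induced-$L$-free implies induced-$U(k)$-free, and those papers bound the number of induced-$U(k)$-free bipartite graphs. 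If you want to quote the literature, that is the correct one-line route; what you wrote instead attributes your incomplete type-cleaning scheme to those papers.

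The difficulty you ran into can be bypassed entirely, and this is what the paper does: it proves the stronger bound $2^{c(s,t)m^{2-1/s}\log m}$ by combining the Sauer--Perles--Shelah Lemma (\cite{Sa}, \cite{Sh}) with the K\H{o}v\'ari--S\'os--Tur\'an theorem \cite{KST}. View each bipartite graph on the classes $A$ and $B$ as a function from the $m^2$ cells to $\{0,1\}$, and let $d=ex(2m,K_{s,t})\le b(s,t)m^{2-1/s}$. If the induced-$L$-free family $\G$ had size exceeding $\sum_{i=0}^{d}\binom{m^2}{i}$, it would shatter some set of $d+1$ cells; by the definition of $d$ these cells contain a full $s\times t$ grid, and shattering lets one prescribe on that grid exactly the biadjacency pattern of $L$, so some member of $\G$ would contain an induced copy of $L$ --- a contradiction. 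Shattering realizes \emph{all} patterns on the grid simultaneously, which is precisely what your single-type pigeonholing could not achieve.
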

\medskip

\noindent
The statement of this lemma, with a non-optimal value of $\eps$,
follows from the results in the above mentioned papers that estimate the
number of bipartite graphs that do not contain an induced copy
of the universal bipartite graph $U(k)$ with $k$ vertices in one
vertex class
and $2^k$ in the other, connected in all possible ways to the vertices of
the first class. Every bipartite graph $L$ is an induced copy
of $U(k)$ for all sufficiently large $k$.
Although this is good enough for our purpose here,
we present next a shorter
new proof of the lemma, which gives a tight estimate up to a logarithmic
factor in the exponent for many graphs $L$.
\begin{lemma}
\label{l92}
Let $L$ be a bipartite graph $L$ with color classes of sizes
$s$ and $t \geq s$. Then the
number of bipartite graphs on two
classes of vertices $A$ and $B$, each of size $m$, that do not
contain an induced copy of $L$, is at most
$2^{c(s,t) m^{2-1/s} \log m }$.
\end{lemma}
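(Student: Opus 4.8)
The plan is to reduce induced containment to a shattering condition and then count using the Kővári–Sós–Turán theorem \cite{KST}. First I would normalise the orientation: since $L$ is bipartite with classes $S_0,T_0$ of sizes $s\le t$, a graph $G$ with no induced copy of $L$ in particular has none in the orientation placing $S_0$ in $A$ and $T_0$ in $B$, and counting graphs avoiding just this orientation is an upper bound, so it suffices to bound the latter. Next I would pass to the universal bipartite graph $U(s,t)$ having one class $X=\{x_1,\dots,x_s\}$ and a second class consisting of $t$ copies of a vertex $y_R$ for every $R\subseteq X$, with $y_R$ joined to exactly $R$. Since every neighbourhood pattern $N_L(v)\subseteq S_0$ (for $v\in T_0$) occurs at most $t$ times in $L$, aligning $S_0$ with $X$ gives $L\subseteq_{\mathrm{ind}}U(s,t)$, so any $G$ containing an induced $U(s,t)$ with $X\subseteq A$ also contains an induced $L$. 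Hence it suffices to bound the number of bipartite $G$ on $(A,B)$ with no induced $U(s,t)$ in the orientation $X\subseteq A$. Writing $G$ as the list of neighbourhoods $N(b)\subseteq A$ ($b\in B$), this is exactly the condition that \emph{no $s$-subset $S\subseteq A$ is $t$-shattered}: for every $S$ at least one of the $2^s$ traces $N(b)\cap S$ is realised by fewer than $t$ vertices $b$.

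For the count, the central observation I would exploit is that the family $\mathcal D$ of neighbourhoods $N\subseteq A$ occurring with multiplicity at least $t$ cannot $1$-shatter any $s$-subset of $A$: if members of $\mathcal D$ realised all $2^s$ traces on some $S$ then, each occurring $\ge t$ times, $S$ would be $t$-shattered. By Sauer–Shelah $|\mathcal D|\le\sum_{i<s}\binom{m}{i}$, so $\mathcal D$ is a bounded-complexity family, and the finer point is that the event ``an $s$-set acquires a given trace at least $t$ times'' is a $K_{s,t}$-type configuration, whose abundance is capped by the Kővári–Sós–Turán theorem at $O(m^{2-1/s})$. The encoding I have in mind reveals the columns $a_1,\dots,a_m$ one at a time and records, for each newly revealed column, only how it refines the current partition of $B$ into traces of size $\ge t$; the number of such \emph{informative} refinements is governed by the above $K_{s,t}$-free structure, giving $O(m^{2-1/s})$ elementary choices, each specifiable with $O(\log m)$ bits, for a total of $2^{O(m^{2-1/s}\log m)}$ with $c(s,t)$ absorbing the Sauer–Shelah and Kővári–Sós–Turán constants.

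The hard part is that induced-$U(s,t)$-free graphs can be very dense, so one cannot simply bound the number of edges of $G$ as in the (non-induced) $K_{s,t}$-case; the whole purpose of the relative, partition-refining encoding is that its description length is driven by the extremal number $ex(m,K_{s,t})=O(m^{2-1/s})$ rather than by $m^2$. Concretely, the crux is accounting for the low-multiplicity (``rare'') traces, which are not controlled by $\mathcal D$ and could a priori be numerous: I expect to dispose of them by noting that splitting a small atom (of size $<t$) costs few bits and contributes only a lower-order term, while the expensive steps are exactly those creating a new atom of size $\ge t$, which Kővári–Sós–Turán limits to $O(m^{2-1/s})$. Verifying that this bookkeeping closes — that the number of expensive steps is $O(m^{2-1/s})$ and each carries only $O(\log m)$ bits — is the main obstacle. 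The case $s=1$, where the condition degenerates to the per-vertex dichotomy $\deg_G(a)<t$ or $\deg_{\overline G}(a)<t$ and the bound $2^{O(m\log m)}$ is immediate, serves as a sanity check and a guide for the general accounting.
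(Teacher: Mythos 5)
Your reduction is correct: restricting to one orientation, embedding $L$ as an induced subgraph of the universal graph $U(s,t)$, and rephrasing ``no induced $U(s,t)$ with $X\subseteq A$'' as ``no $s$-subset of $A$ is $t$-shattered'' are all fine. But the entire content of the lemma is the counting step, and that is exactly the part you have not carried out; moreover, the accounting you sketch does not close. Two concrete failures. First, the claim that each ``expensive step'' carries only $O(\log m)$ bits is untenable: recording how a large atom splits into two parts both of size at least $t$ means recording an essentially arbitrary subset of that atom, which can cost $\Omega(m)$ bits. (Incidentally, the number of such splits is at most $m/t$ by a trivial disjointness argument --- the resulting atoms of size $\geq t$ are pairwise disjoint --- so the K\H{o}v\'ari--S\'os--Tur\'an theorem plays no role there; your appeal to it is by analogy only, since you never construct an auxiliary $K_{s,t}$-free graph whose edges count your expensive steps.) Second, and more fundamentally, the dominant cost of any row-by-row encoding is one your proposal never accounts for: for each newly revealed vertex of $A$ one must record which of the \emph{current} atoms of $B$ lie wholly inside its neighbourhood and which lie wholly outside --- a priori an arbitrary union of atoms, i.e.\ one bit per atom per row. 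This is where the $2^{m^2}$ entropy of unrestricted graphs lives, and it is a live issue inside your class: the point--line incidence graph of a projective plane of order $q$ (so $m=q^2+q+1$) has no $2$-shattered pair of rows for $s=t=2$, since two lines meet in exactly one point, yet after revealing two pencils of lines (about $2\sqrt{m}$ rows) the trace partition of $B$ consists of $\Theta(m)$ atoms, almost all singletons, after which the naive inside/outside record costs $\Theta(m)$ bits for each of the remaining $m-O(\sqrt m)$ rows. Any correct proof along your lines must use the no-shattering hypothesis to compress precisely these assignments (and must also justify, rather than assert, that splits of small atoms are lower order), and the proposal contains no mechanism for either. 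Your own remark that ``verifying that this bookkeeping closes \dots\ is the main obstacle'' is an accurate self-assessment: that obstacle is the theorem.

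For contrast, the paper's proof needs a single application of the Sauer--Perles--Shelah lemma (\cite{Sa},\cite{Sh}), but over a different ground set: the $m^2$ edge slots between $A$ and $B$, with dimension threshold $d=ex(2m,K_{s,t})\leq b(s,t)m^{2-1/s}$ by \cite{KST}. If a family of graphs has more than $\sum_{i\leq d}\binom{m^2}{i}$ members, then, viewed as $0/1$ functions on the edge slots, it shatters some set of $d+1$ slots; by the choice of $d$ these slots contain a $K_{s,t}$, and shattering realizes \emph{every} $0/1$ pattern on them --- in particular the pattern equal to $L$, with the remaining slots of that $K_{s,t}$ set to $0$ --- so some member contains an induced copy of $L$. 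Hence the induced-$L$-free family has size at most $\sum_{i\leq d}\binom{m^2}{i}\leq m^{2d+2}=2^{O(m^{2-1/s}\log m)}$, with no encoding argument at all. Note that your use of Sauer--Shelah (on the ground set $A$, to bound the family $\mathcal{D}$ of popular traces) is not the application that drives the bound; the idea you are missing is to apply it on the edge set with the extremal number $ex(2m,K_{s,t})$ as the dimension, which converts the K\H{o}v\'ari--S\'os--Tur\'an bound directly into a counting bound.
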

The above is tight, up to the logarithmic term in the exponent,
for every pair $t \geq s$ where $t$ is sufficiently large as a function
of $s$. Indeed, as shown by the (projective) norm-graphs of
\cite{KRS}, \cite{ARS}, there is a bipartite graph with classes of
vertices of size $m$ each and with $\Omega(m^{2-1/s})$ edges that
contains no copy of the complete bipartite graph $K=K_{s,t}$, provided
$t>(s-1)!$ Every member of the
collection of all subgraphs of this graph contains
no (induced  or non-induced) copy of $K$.

\begin{proof}
We apply the Sauer-Perles-Shelah Lemma (\cite{Sa}, \cite{Sh}) which states
that for any collection $\C$ of more than $\sum_{i=0}^d {q \choose i}$
functions from a set $Q$ of size $q$ to $\{0,1\}$ there is a subset
$D \subset Q$ of cardinality $d+1$ {\em shattered} by $\C$. That is,
for any function
$g:D \mapsto \{0,1\}$ there is an $f \in \C$ such that
$g(x)=f(x)$ for all $x \in D$. Let $L$  be a bipartite graph with classes
of vertices of sizes $t \geq s$ and let $\G$ be a collection of
graphs on the two vertex classes $A$ and $B$, where $|A|=|B|=m$.
Let $d=ex(2m,K_{s,t})$ be the maximum possible
number of edges in a
graph on $2m$ vertices that
contains no copy of the complete bipartite graph $K_{s,t}$.
By the K\H{o}v\'ari-S\'os-Tur\'an Theorem \cite{KST},
$d \leq b(s,t)m^{2-1/s}$.
By the Sauer-Perles-Shelah Lemma, if
$$
|\G| \geq 1+\sum_{i=0}^d {{m^2} \choose i}
$$
then the collection of graphs $\G$, viewed as a collection of
functions from the set of all edges of the complete
bipartite graph on the classes of vertices $A,B$
to $\{0,1\}$, shatters a set of $d+1$ edges. By the definition
of $d$ this set of edges
contains a complete bipartite graph with vertex classes of sizes
$s$ and $t$. Any subgraph of this graph (and in particular $L$)
is an induced subgraph of some member of $\G$. Since the right-hand-side
of the last inequality is smaller than
$$m^{2d} \leq m^{2b(s,t) m^{2-1/s}}=2^{c(s,t)m^{2-1/s} \log m},
$$
this completes the proof.
\end{proof}

\medskip
\par\noindent
The following is a counterpart of Corollary~\ref{DCgen} for the induced case.

\begin{cor}\label{DCindgen}
Let $\G$ be a set of graphs, each containing at least one edge,
and let $\LL_{(\G,{\rm ind})}$ be the local graph class containing all graphs that contain at least one $G\in\G$ as an induced subgraph.
Then
$$DC(\LL_{(\G,{\rm ind})})= \frac{1}{\chi_{\rm min}(\G)-1}.$$
\end{cor}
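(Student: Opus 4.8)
The plan is to follow the proof of Corollary~\ref{DCgen} essentially verbatim, replacing each appeal to the non-induced machinery by its induced counterpart: Corollary~\ref{cor:DCub} still supplies the upper bound, while Theorem~\ref{thm:DCLindb} (rather than Theorem~\ref{thm:DCL}) supplies the single-graph lower bound. First I would note that $\LL_{(\G,{\rm ind})}$ is indeed a local graph class, since induced-subgraph containment is transitive: if $H_1\in\LL_{(\G,{\rm ind})}$ is an induced subgraph of $H_2$, then the copy of some $G\in\G$ induced in $H_1$ is also induced in $H_2$, so $H_2\in\LL_{(\G,{\rm ind})}$. Hence Theorem~\ref{thm:limes} guarantees that $DC(\LL_{(\G,{\rm ind})})$ is a well-defined limit and the statement makes sense.

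For the upper bound I would first compute $\chi_{\rm min}(\LL_{(\G,{\rm ind})})$. On one hand every $G\in\G$ lies in $\LL_{(\G,{\rm ind})}$, being an induced subgraph of itself, so $\chi_{\rm min}(\LL_{(\G,{\rm ind})})\le\chi_{\rm min}(\G)$. On the other hand, any $L\in\LL_{(\G,{\rm ind})}$ contains some $G\in\G$ as an induced subgraph, whence $\chi(L)\ge\chi(G)\ge\chi_{\rm min}(\G)$, giving the reverse inequality. Thus $\chi_{\rm min}(\LL_{(\G,{\rm ind})})=\chi_{\rm min}(\G)$, which is at least $2$ because each member of $\G$ has an edge. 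Corollary~\ref{cor:DCub} then yields $DC(\LL_{(\G,{\rm ind})})\le\frac{1}{\chi_{\rm min}(\G)-1}$.

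For the matching lower bound I would invoke monotonicity of $M_{\LL}(n)$ in the class $\LL$: if $\LL_1\subseteq\LL_2$, then every $\LL_1$-good family is also $\LL_2$-good, so $M_{\LL_1}(n)\le M_{\LL_2}(n)$ and hence $DC(\LL_1)\le DC(\LL_2)$. Fixing any single $L\in\G$, the class of graphs containing $L$ as an induced subgraph is contained in $\LL_{(\G,{\rm ind})}$, so $DC(L,{\rm ind})\le DC(\LL_{(\G,{\rm ind})})$. Choosing $L$ with $\chi(L)=\chi_{\rm min}(\G)$ (the minimum is attained since chromatic numbers are integers) and applying Theorem~\ref{thm:DCLindb}, which evaluates $DC(L,{\rm ind})=\frac{1}{\chi(L)-1}$, gives $DC(\LL_{(\G,{\rm ind})})\ge\frac{1}{\chi_{\rm min}(\G)-1}$, completing the argument.

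I do not anticipate any genuine obstacle at this stage: all the substantive difficulty has already been absorbed into Theorem~\ref{thm:DCLindb}, and through it into Lemma~\ref{lemma:kpartiteub} and the regularity lemma. The only points requiring a moment's care are the identity $\chi_{\rm min}(\LL_{(\G,{\rm ind})})=\chi_{\rm min}(\G)$ and the remark that a concrete extremal $L\in\G$ attaining $\chi_{\rm min}(\G)$ can be fed into Theorem~\ref{thm:DCLindb}; both are immediate, so the proof is only a short reduction.
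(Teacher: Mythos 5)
Your proof is correct and follows essentially the same route as the paper: the lower bound via $DC(\LL_{(\G,{\rm ind})})\ge DC(G,{\rm ind})$ for a $G\in\G$ minimizing the chromatic number together with Theorem~\ref{thm:DCLindb}, and an upper bound that reduces to the general chromatic bound. The only cosmetic difference is that you apply Corollary~\ref{cor:DCub} directly after verifying $\chi_{\rm min}(\LL_{(\G,{\rm ind})})=\chi_{\rm min}(\G)$, whereas the paper gets the upper bound from the monotonicity $DC(\LL_{(\G,{\rm ind})})\le DC(\LL_{\G})$ combined with Corollary~\ref{DCgen} --- the same content in a slightly different order.
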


\proofka
The upper bound is immediate from $DC(\LL_{({\G},{\rm ind})})\le DC(\LL_{\G})$ and Corollary~\ref{DCgen}. The lower bound follows, as in Corollary~\ref{DCgen}, from the fact that $DC(\LL_{({\G},{\rm ind})})
\geq DC(G, {\rm ind})$ for any $G\in \G$.
In particular, picking a graph $G$ with $\chi(G)=\chi_{\rm min}(\G)$ suffices.
\qed

\begin{remark}\label{rem:indual}
{\rm It is straightforward that the above results imply a strengthening (as far as the upper bound is concerned) of the statement in Remark~\ref{rem:asydual}, namely that we have for the dual problem also in the induced case
$$\lim_{n\to\infty}\frac{2}{n(n-1)}\log D_{\LL_{({\G},{\rm ind})}}(n)=1-\frac{1}{\chi_{\min}(\G)-1}.$$
This follows from Lemma~\ref{lem:ub} and Corollary~\ref{DCindgen}
just as the statement in Remark~\ref{rem:asydual} followed from Lemma~\ref{lem:ub} and Corollary~\ref{DCgen}.
$\Diamond$}
\end{remark}

\begin{remark}\label{rem:empty}
{\rm If $L$ is the
edgeless graph on $r \geq 2$ vertices then it is clear that
for every $n \geq r$ there is  a family  of
$2^{{n \choose 2}-{r \choose 2}}$ graphs on $[n]$ so that the symmetric
difference between any two contains an induced copy of $L$. Indeed
we simply take all graphs which agree on the ${r \choose 2}$ edges
of a fixed $r$-clique. Note that for every fixed $r$ this is a constant
fraction of all graphs on $n$ vertices, much larger than
$M_L(n)$ or $M_{L,{\rm ind}}(n)$ for any graph $L$ with at least
$2$ edges.
This (including the constant) is clearly tight for $r=2$ (the family
cannot contain a graph and its complement), and by the main result of
\cite{EFF} it is tight also for $r=3$ (the result in \cite{EFF} holds
for any family in which any two members agree on a triangle, not only
if any two intersect in a common triangle-the
equivalence of these  two statements
is proved already in \cite{CFGS}).  We do not know if this is tight
for larger values of $r$. Note that an equivalent formulation of the
question here is the determination of $M_{\F}(n)$ for $\F$ which
is the family of all graphs with independence number at least $r$.
$\Diamond$}
\end{remark}

\section{Open problems}

In this final section we collect some
related problems left open.
\begin{problem}
For what graph families $\F$ is it true that $M_{\F}(n)$ is achieved
by a linear graph family code, that is one that is closed under
the symmetric difference operation?
\end{problem}

Our results here include examples where this is the case as well as
ones in which it is not. Indeed in Theorem \ref{thm:fullstar} the
precise answer is $n$ or $n+1$, and if this is not a power of $2$
there is no optimal linear solution. Another family of examples in which
the optimal family cannot be achieved by a linear example is that in which
the family $\F$ is the family of all graphs with at most $2r$ edges,
where $r$ is chosen so that the sum
$$
\sum_{i=0}^r {{n \choose 2} \choose i}
$$ is not a power of $2$. Indeed, by a theorem of Kleitman
\cite{Kl} (for usual codes)
the size of the optimum family here is the size of the family
of all graphs  with at most $r$ edges.

\medskip 
The construction in the proof of Theorem~\ref{thm:conn} has the property that for any two of its graphs $G$ and $G'$ with an equal number of edges (that is trivially necessary for satisfying the condition in the next problem) their two {\em asymmetric} differences $$G\setminus G'=([n],E(G)\setminus E(G'))\ {\rm and}\ G'\setminus G=([n],E(G')\setminus E(G))$$ are isomorphic. This suggests the following question.

\begin{problem}
What is the maximum possible size of a graph family ${\cal A}$ of graphs on $n$ vertices satisfying that if $A,A'\in {\cal A}$ then $A\setminus A'$ and $A'\setminus A$ are isomorphic?
\end{problem}

\medskip
\par\noindent
A larger family
than the one we can obtain from the construction in the proof of Theorem~\ref{thm:conn} can be given by taking $\lfloor n/k\rfloor$ vertex-disjoint stars, each on $k$ vertices. This gives a lower bound that is superexponential in $n$ but we do not have any nontrivial upper bound.

\medskip
Theorems~\ref{thm:Hp} and \ref{thm:fullstar} show a huge difference between requiring a spanning path or a spanning star in the symmetric differences. One may wonder what happens ``in between''. Note that if we formulate this ``in betweenness'' so that we want to have a spanning tree with diameter at most $k$, then while with $k=2$ we are at Theorem~\ref{thm:fullstar} and with $k=n-1$ at Theorem~\ref{thm:Hp}, already for $k=3$ we get the same result as for $k=n-1$ by the construction in the proof of Theorem~\ref{thm:conn}. (This is simply because complete bipartite graphs contain spanning trees of diameter at most $3$.) So it seems plausible to formulate questions in terms of more specific ``natural'' sequences of spanning trees $T_1, T_2, \dots$. (In the problem below the notation $M_{T_n}(n)$ is meant to denote the largest possible cardinality of a family of graphs on vertex set $[n]$ such that the symmetric difference of any two of them contains $T_n$ as a subgraph.)

\medskip
\par\noindent
\begin{problem}
For what ``natural'' sequences $T_1, T_2,\dots, T_i,\dots$ of trees (with $T_i$ having exactly $i$ vertices for every $i$) will the value of $M_{T_n}(n)$ grow only linearly in $n$? A similar question is valid if $T_i$ is
replaced by
${\cal T}_i$, some ``natural'' family of $i$-vertex trees.
\end{problem}

\medskip
\par\noindent
Propositions~\ref{prop:triv34}, \ref{lem:MK3_5}, \ref{lem:MK3_6}, \ref{lem:MC7}
showed that the upper bound of Proposition~\ref{thm:Wilex} can be sharp for small values of $n$ for the requirement that a triangle or at least an odd cycle is contained in the symmetric differences. It would be interesting to know whether this can also happen for large values of $n$.

\medskip
\par\noindent
\begin{problem}
Is $$M_{K_3}(n)= 2^{{n\choose 2}-\lceil\frac{n}{2}\rceil\lfloor\frac{n}{2}\rfloor}$$ true always or at least for infinitely many values of $n$? Even if this is not so, does the analogous equality hold for $M_{{\cal C}_{\rm odd}}(n)$?
\end{problem}
Note that there are much better known estimates for the number of triangle-free
graphs on $n$ labeled vertices than the one we have used here, in fact, it
is known that almost all of these graphs are bipartite \cite{EKRo}. While this
improves the gap between the upper and lower bounds that follow from
our proofs for
$M_{K_3}(n)$, it is still far from determining its precise value.

\medskip
\par\noindent
The final  problem we mention is related to the remark
in the end of the last subsection.

\medskip
\par\noindent
\begin{problem}
Is it true that for any fixed $r>3$ the maximum possible
cardinality of a family of graphs on $n$ labeled vertices in which
the symmetric difference between any two members has independence
number at least $r$,
is exactly a $1/{r \choose 2}$ fraction of
the number of all graphs on these vertices ?
\end{problem}


\begin{thebibliography}{20}


\bibitem{ABB} Peter Adams, Darryn Bryant, Melinda Buchanan, A survey on the existence of G-designs, {\em Journal of Combinatorial Designs}, 16 (2008), No. 5, 373--410.

\bibitem{Ale} Vladimir E.~Alekseev, On the entropy values of hereditary
classes of graphs, {\em Discrete Math.\ Appl.}, 3 (1993), 191--199.

\bibitem{ABBM}
Noga Alon, J\'ozsef Balogh, B\'ela Bollob\'as, Robert Morris,
The structure of almost all graphs in a hereditary property,
{\em J. Combin. Theory, Ser. B}, 101 (2011), 85--110.

\bibitem{AFKS}
Noga Alon, Eldar Fischer, Michael Krivelevich, Mario Szegedy, Efficient testing of large graphs, {\em Combinatorica}, 20 (2000), 451--476.

\bibitem{AMY}
Noga Alon, Shay Moran, Amir Yehudayoff,
Sign rank, VC dimension and spectral gaps,
{\em Proc. COLT 2016, 47--80},
Also: {\em Mat. Sbornik} 208:12 (2017), 1724-1757.

\bibitem{ARS}
Noga Alon, Lajos R\'onyai, Tibor Szab\'o,
Norm-graphs: variations and applications,
{\em J. Comb. Theory, Ser. B}, 76 (1999), 280--290.


\bibitem{AS} Noga Alon, Joel Spencer, {\em The Probabilistic Method,
Fourth edition}, John Wiley \& Sons, Hoboken, New Jersey, 2016.

\bibitem{Anderson}
Bruce A. Anderson, Finite
topologies and Hamiltonian paths, {\em J. Combin.
Theory, Ser. B}, 14 (1973), 87--93.

\bibitem{Berlekamp} Elwyn Berlekamp, {\em Algebraic Coding Theory (Revised Edition)}, World Scientific, 2015.

\bibitem{BT1}  B\'ela Bollob\'as, Andrew Thomason,  Projections of
bodies and hereditary  properties of hypergraphs,
{\em Bull. London Math. Soc.}, 27 (1995), 417--424.

\bibitem{BT2}  B\'ela Bollob\'as, Andrew Thomason,
Hereditary and monotone properties of graphs, ``The mathematics
of Paul Erd\H{o}s, II" (R.L. Graham and J. Ne\v{s}et\v{r}il, Editors),
{\em Alg. and Combin.}, Vol. 14, Springer-Verlag, New York/Berlin (1997),
70--78.

\bibitem{CFGS} Fan R. K. Chung, Peter Frankl, Ronald L. Graham, James B. Shearer, Some intersection theorems for
ordered sets and graphs, {\em J. Combin. Theory, Ser. A},  43 (1986), 23--37.

\bibitem{CFK} G\'erard Cohen, Emanuela Fachini, J\'anos K\"orner, Connector families of graphs, {\em Graphs Combin.}, 30 (2014), no. 6, 1417--1425.

\bibitem{CsK} Imre Csisz\'ar, J\'anos K\"orner, {\em Information theory. Coding theorems for discrete memoryless systems. Second edition}, Cambridge University Press, Cambridge, 2011.

\bibitem{Diestel} Reinhard Diestel, {\em Graph Theory, Fifth edition}, Graduate Texts in Mathematics, 173. Springer, Berlin, 2017.

\bibitem{EFF} David Ellis, Yuval Filmus, Ehud Friedgut, Triangle-intersecting families of graphs, {\em Journal of the  European Mathematical Society}, 14 (2012), No. 3, 841--885.

\bibitem{EFR} Paul Erd\H{o}s, Peter Frankl, Vojt\v{e}ch R\"odl, The asymptotic number of graphs not containing
a fixed subgraph and a problem for hypergraphs
having no exponent, {\em Graphs Combin.}, 2 (1986), no. 2, 113--121.

\bibitem{EKRo}
Paul Erd\H{o}s, Daniel J. Kleitman, Bruce L. Rothschild,
Asymptotic enumeration of $K_n$-free graphs,
{\em Colloquio Internazionale
sulle Teorie Combinatorie} (Rome, 1973), Tomo II, Atti dei Convegni
Lincei 17, Accad. Naz. Lincei (1976) 1927.

\bibitem{EKR} Paul Erd\H{o}s, Chao Ko, Richard Rado, Intersection theorems for systems of finite sets, {\em Quart. J. Math.,
Oxford Second Series}, 12 (1961), 313--320.

\bibitem{ErdSim}
Paul Erd\H{o}s, Mikl\'os Simonovits, A limit theorem in graph theory, {\em Studia Scientiarum
Mathematicarum Hungarica}, 1 (1966), 51--57.

\bibitem{ErdStone} Paul Erd\H{o}s, Arthur H. Stone,
On the structure of linear graphs, {\em Bull.
Amer. Math. Soc.}, 52 (1946), 1087--1091.

\bibitem {Kl}
Daniel J. Kleitman, On a combinatorial conjecture of Erd\H{o}s,
{\em J. Combinatorial Theory}, 1 (1966), 209–214.

\bibitem{Kobayashi}
Midori Kobayashi, On perfect one-factorization of the complete graph, {\em Graphs Combin.}, 5 (1989), 351--353.

\bibitem{KRS}
J\'anos Koll\'ar, Lajos R\'onyai, Tibor Szab\'o,
Norm-graphs and bipartite Tur\'an numbers,
{\em Combinatorica} 16 (1996), no. 3, 399–406.

\bibitem{trif} J\'anos K\"orner, G\'abor Simonyi, Trifference, {\em Studia Sci. Math. Hungar.}, 30 (1995), 95--103, also in: {\em Combinatorics and its Applications to the Regularity and Irregularity of Structures}, W. A. Deuber and V. T. Sós eds., Akadémiai Kiadó, Budapest.

\bibitem{KMS} J\'anos K\"orner, Silvia Messuti, G\'abor Simonyi, Families of graph--different Hamilton paths, {\em SIAM Journal on Discrete Mathematics}, 26 (2012), 321-329,

\bibitem{Kotzig} Anton Kotzig, Hamilton graphs and Hamilton circuits, In: {\em Theory of Graphs and Its
Applications}, Publ. House Czechoslovak Acad. Sci., Prague, 1964.

\bibitem{Kose} Istv\'an Kov\'acs, D\'aniel Solt\'esz, Triangle-different Hamiltonian paths, {\em J. Combin. Theory, Ser. B}, 129 (2018), 1-17

\bibitem{KST}
Tam\'as K\H{o}v\'ari, Vera T. S\'os, P\'al Tur\'an,
On a problem of K. Zarankiewicz,
{\em Colloq. Math.}, 3 (1954), 50–57.

\bibitem{Kozlov} Dmitry Kozlov, {\em Combinatorial Algebraic Topology}, Springer Verlag, Berlin, Heidelberg, 2008.


\bibitem{Mutze} Torsten M\"utze, Combinatorial Gray codes--an updated survey, arXiv:2202.01280 [math.CO].

\bibitem{Nakamura} G. Nakamura, Dudney's round table problem and the edge-coloring of the complete graph
(in Japanese). Sagaku Seminar No. 159 (1975), 24--29.

\bibitem{Rosa}
Alexander Rosa, Perfect 1-factorizations, {\em Mathematica Slovaca}, 69 (2019), no. 3, 479--496.

\bibitem{Sa} Norbert Sauer, On the density of families of sets,
{\em J. Combinatorial Theory Ser. A} 13 (1972), 145--147.

\bibitem{SchU} Edward\,R. Scheinerman, Daniel\,H. Ullman, {\em Fractional Graph
Theory}, Wiley-Interscience Series in Discrete Mathematics and
Optimization, John Wiley and Sons, Chichester, 1997.

\bibitem{Sh} Saharon Shelah, A combinatorial problem; stability and order
for models and theories in infinitary languages,
{\em Pacific J. Math.} 41 (1972), 247--261.

\bibitem{SimSos} Mikl\'os Simonovits, Vera T. S\'os,
 Graph intersection theorems, in {\em Proc. Colloq. Combinatorics and Graph Theory, Orsay, Paris, 1976}, 389-391.

\bibitem{Tonchev} Vladimir D. Tonchev, Error-correcting codes from graphs, {\em Discrete Math.}, 257 (2002), 549--557

\bibitem{Turan} P\'al Tur\'an,  Egy gr\'afelm\'eleti sz\'els\H{o}\'ert\'ekfeladatr\'ol (On an extremal problem in
graph theory, in Hungarian), K\"oz\'episkolai Matematikai \'es Fizikai Lapok, 48
(1941), 436--452.

\bibitem{Wilson}
Richard M. Wilson, Decompositions of complete graphs into subgraphs isomorphic to a given graph, Proceedings
of the Fifth British Combinatorial Conference (Aberdeen, 1975), {\em Congr. Numer.}, 15, Utilitas Math., (1976), 647--659.





\end{thebibliography}
\end{document}